\newtheorem{assumption}{Assumption}
\newtheorem{proposition}{Proposition}[section]
\newtheorem{theorem}{Theorem}[section]
\newtheorem{lemma}{Lemma}[section]
\newtheorem{corollary}{Corollary}[section]
\newtheorem{Example}{Example}
\numberwithin{equation}{section}
\numberwithin{proposition}{section}
\numberwithin{corollary}{section}
\numberwithin{theorem}{section}
\numberwithin{lemma}{section}
\newcommand{\ds}{\displaystyle}
\begin{document}
\begin{frontmatter}
\title{Improved Gaussian Mean Matrix Estimators in high-dimensional data
} \runtitle{On Improved Estimator of Mean Matrix in high-dimensional data}
\begin{aug}
\author{\fnms{Arash} \snm{A. Foroushani } \thanksref{t2,m2}
\ead[label=e2]{aghaeif@uwindsor.ca}}
\and
\author{\fnms{S\'ev\'erien} \snm{Nkurunziza} \thanksref{t2,m2}
\ead[label=e3]{severien@uwindsor.ca}}

\thankstext{t2}{Supported by Natural Sciences and Engineering Research Council of Canada}
\runauthor{Arash A. Foroushani and S\'ev\'erien Nkurunziza}

\affiliation{ University of Windsor, Mathematics and Statistics department\\
	401 Sunset Avenue, Windsor, ON, N9B 3P4\\
	Email:{\color{blue}aghaeif@uwindsor.ca} E-mail:{\color{blue}severien@uwindsor.ca}\thanksmark{m2}}

\end{aug}
\maketitle

\begin{abstract}
   In this paper, we introduce a class of improved estimators for the mean parameter matrix of a multivariate normal 
   distribution with an unknown variance-covariance matrix. In particular, the main results of \cite{ChetelatWells}[{\em Improved Multivariate Normal Mean Estimation with Unknown Covariance when $p$ is Greater than $n$. Annals of Statistics, 2012, {\bf 40}(6), 3137--3160}] are established in their full generalities and we provide the corrected version of their Theorem~2. Specifically, we generalize the existing results in three ways. First, we consider a parameter matrix estimation problem which enclosed as a special case the one about the vector parameter.   
   Second, we propose a class of James-Stein matrix estimators and, we establish a necessary and a sufficient condition for any member of the proposed class to have a finite risk function. Third, we present the conditions for the proposed class of estimators to dominate the maximum likelihood estimator. On the top of these interesting contributions, 
   the additional novelty consists in the fact that, we extend the methods suitable for the vector parameter case and 
   the derived results hold in the classical case as well as in the context of high and ultra-high dimensional data.

\end{abstract}
\begin{keyword}[class=MSC]
\kwd[Primary ]{}
\kwd{62C20}
\kwd[; secondary ]{62H12}
\end{keyword}


\begin{keyword}
\kwd{Invariant quadratic loss}
\kwd{James-Stein estimation}
\kwd{Location parameter}
\kwd{Minimax estimation}
\kwd{Moore-Penrose inverse}
\kwd{Risk function}
\kwd{Singular Wishart distribution}
\kwd{some counter-examples}
\end{keyword}

\end{frontmatter}
\section{Introduction}\label{sec:intro}
Inspired by the work in \cite{ChetelatWells}, we consider an estimation problem concerning the mean matrix of a Gaussian random matrix in the context where the variance-covariance matrix is unknown. In the realm of multivariate statistical analysis, the estimation of the matrix mean plays a pivotal role, especially when dealing with covariance or correlation matrices. Traditional methods of matrix mean estimation have been effective for low-dimensional datasets. However, with the advent of big data and the increasing dimensionality of datasets in fields such as genomics~(see \cite{Tzeng}, \cite{Pardy}), finance~(see \cite{Fan}), and neuro-imaging data analysis~(see \cite{ZhouLiZhu}), these traditional techniques often fall short. High-dimensional data, where the number of variables can exceed the number of observations, presents unique challenges, including increased computational complexity and the curse of dimensionality.

High-dimensional data analysis plays a pivotal role in statistical inference, enabling researchers and analysts to extract valuable insights, build precise models, and draw meaningful conclusions from intricate and extensive datasets. By addressing the distinctive challenges posed by high-dimensional data, where the number of features exceeds the number of observations, this approach equips us with essential tools to gain a comprehensive understanding across diverse real-world applications. These applications span various fields, such as text mining, image and video analysis, genomic, and financial analytic.

The need for improved matrix mean estimation techniques in the context of high-dimensional data has become increasingly evident. Such techniques aim to provide more accurate and computationally efficient estimates, even when faced with the intricacies of high-dimensional spaces. By leveraging advancements in optimization, regularization, and matrix theory, researchers are developing innovative approaches to address the challenges posed by high-dimensional multivariate models. These enhanced methods not only promise better statistical properties but also open the door to more sophisticated analyses in various scientific and technological domains. This exploration delves into the cutting-edge methodologies for improved matrix mean estimation in high-dimensional settings, elucidating their theoretical underpinnings, practical applications, and potential impact on multivariate data analysis.


In this paper, we improve the results in \cite{ChetelatWells} in three ways. First, we propose a class of matrix James-Stein estimators in the context of high-dimensional data. In the special case of parameter vector, the proposed class of estimators yields the class of estimators given in \cite{ChetelatWells}. Second, we derive a necessary and sufficient condition for the members of the proposed class of  James-Stein estimators to have a finite risk. For the special case of vector estimators, the established result provides a corrected version of Theorem~2 of \cite{ChetelatWells}. Third, we establish a sufficient condition for the proposed class of James-Stein estimators to dominate the classical maximum likelihood estimator~(MLE).

The remainder of this paper is structured as follows. In \Cref{Sec:fundamental}, we present the statistical model and the primary results. Namely, we present in this section crucial propositions and lemma which play a vital role in deriving the main results of this paper.  
In  \Cref{Sec:mainresult}, we present the main results of this paper.
In \Cref{sec:simulation}, we present some simulation results which corroborate the main findings of this paper. Further, in \Cref{sec:conclusion},  we present some concluding remarks. Finally, for the convenience of the reader, technical results and some proofs are given in \Cref{sec:append} and \Cref{sec:appendMainres}.



%


   \section{Statistical model and some fundamental results}\label{Sec:fundamental}
In this section, we present the statistical model and set up some notations used in this paper. We also present some fundamental results which are useful in deriving the main results of this paper.  To set up some notations, let $\bm{I}_{q}$ denote $q$-dimensional identity matrix, let $\bm{\Sigma}$ be a positive definite matrix and let $\bm{\theta}$ be a $p\times q$-matrix. For a given $p_{\scriptscriptstyle 1}\times p_{\scriptscriptstyle 2}$-matrix $\bm{A}$, let 
$\textnormal{Vec}(\bm{A})$ stand for a vectorization operator that transforms the matrix into a column vector by vertically stacking the columns of the matrix $\bm{A}$. 
Further, let $\bm{A}\otimes\bm{B}$ denote the Kronecker product of the matrices $\bm{A}$ and $\bm{B}$, let $U\sim \mathcal{N}_{p}\left(\mu, \bm{\Sigma}\right)$ denote a $p$-column Gaussian random vector $U$ with means $\mu$ and covariance $\bm{\Sigma}$ and, for a random matrix $\bm{U}$, let $\bm{U}\sim \mathcal{N}_{p_{\scriptscriptstyle 1}\times p_{\scriptscriptstyle 2}}\left(\bm{\mu},\bm{A}_{1}\otimes \bm{A}_{2}\right)$ to stand for $\textnormal{Vec}(\bm{U})\sim \mathcal{N}_{p_{\scriptscriptstyle 1} p_{\scriptscriptstyle 2}}\left(\textnormal{Vec}\left(\bm{\mu}\right),\bm{A}_{1}\otimes \bm{A}_{2}\right)$. We also denote
$\bm{U}\sim \mathcal{W}_{\scriptscriptstyle p}\left(m,\bm{\Sigma}\right)$ to stand for $p\times p$-random matrix that follows Wishart distribution with degrees of freedom $m$ and non-centrality parameter $\bm{\Sigma}$.
For $m \times n$ matrices $\bm{A}$ and $\bm{B}$, let 
$\nabla_{\bm{A}}=\left(\frac{\partial}{\partial A_{ij}}\right)_{1\leqslant i \leqslant m,1 \leqslant j \leqslant n}$, define
\begin{eqnarray*}
\textnormal{div}_{\bm{A}}\bm{B}=\nabla_{\bm{A}}.\bm{B}=\textnormal{div}_{\textnormal{vec}(\bm{A})}\textnormal{vec}(\bm{B})=\sum_{i,j}^{}{\frac{\partial B_{ij}}{\partial A_{ij}}},
\mbox{ and }
\left(\nabla_{\bm{A}}\bm{B}\right)_{ij}=\sum_{\alpha}^{}{\left(\nabla_{\bm{A}}\right)_{i \alpha}B_{\alpha j}}=\sum_{\alpha}^{}{\frac{\partial B_{\alpha j}}{\partial A_{i \alpha}}}.
\end{eqnarray*}
In the sequel, despite the fact that the target parameter is a matrix, we generally use the same notations/symbols as in \cite{ChetelatWells}. In particular, we consider the scenario where we observe a $p\times q$-random matrix $\bm{X}\sim \mathcal{N}_{p \times q}(\bm{\theta}, \bm{\Sigma} \otimes \bm{I}_{q})$ with $\bm{\Sigma}$ a positive definite matrix. 

%
Our objective is to estimate the mean matrix $\bm{\theta}$ in the context of high or ultra-high-dimensional data with $\bm{\Sigma}$ unknown nuisance parameter. We also consider that the random matrix $\bm{S}$ is observed along with the matrix $\bm{X}$, and we consider the scenario where $\bm{X}$ and $\bm{S}$ are independent in context  where $\bm{S}\sim \mathcal{W}_{p}\left(nq,\, \bm{\Sigma}/q\right)$, with $p>nq$. To better connect to the data collection, we consider the case where $N$ observations of $p\times q$-matrices $\bm{W}^{(1)}$, $\bm{W}^{(2)}$, \dots, $\bm{W}^{(N)}$ which are independent and identically distributed as $\mathcal{N}_{p\times q}\left(\bm{\theta},\bm{\Xi}\otimes \bm{I}_{q}\right)$. For $i=1,2,\dots, N$, let $\bm{W}^{(i)}=\left(W^{(i)}_{lk}\right)_{1\leqslant l\leqslant p, 1\leqslant k\leqslant q}$. 
Under this sampling plan, the maximum likelihood estimators (MLE) of $\bm{\theta}$ and $\bm{\Sigma}$ are respectively given by 
\begin{eqnarray}
\bm{X}\equiv\bar{W}=\ds{\frac{1}{N}\sum_{i=1}^{N}}\bm{W}^{(i)}, \quad{ }  \bm{S}\equiv\ds{\frac{1}{Nq}}\ds{\sum_{i=1}^{N}}\left(\bm{W}^{(i)}-\bar{\bm{W}}\right)\left(\bm{W}^{(i)}-\bar{\bm{W}}\right)'.\label{XbarS}
\end{eqnarray}
To rewrite $\bm{S}$ under the form of a simple quadratic form, let $\bm{W}=\left(\bm{W}^{(1)}\vdots \bm{W}^{(2)}\vdots \dots\vdots \bm{W}^{(N)}\right)'$ and let $\bm{e}_{N}$ be $N$-column vector with all components equal to 1. From these notations, the sample variance-covariance can be rewritten as
\begin{eqnarray}
Nq\bm{S}=\bm{W}'\left[\left(\bm{I}_{N}-\frac{1}{N}\bm{e}_{N}\bm{e}'_{N}\right)\otimes \bm{I}_{q}\right]\bm{W}, \label{S}
\end{eqnarray}
and then, using the properties of multivariate normal distributions along with the fact that $\left(\bm{I}_{N}-\frac{1}{N}\bm{e}_{N}\bm{e}'_{N}\right)\otimes \bm{I}_{q}$ is idempotent matrix with the rank $(N-1)q$, one can verify that
$Nq\bm{S}\sim \mathcal{W}_{p}\left((N-1)q,\, \bm{\Xi}\right)$ or equivalently $\bm{S}\sim \mathcal{W}_{p}\left((N-1)q,\, \frac{\bm{\Xi}}{Nq}\right)$. Further, there exists an orthogonal matrix $\bm{Q}$ such that
\begin{eqnarray*}
\bm{Q}'\left[\left(\bm{I}_{N}-\frac{1}{N}\bm{e}_{N}\bm{e}'_{N}\right)\otimes \bm{I}_{q}\right]
\bm{Q}=\left(
         \begin{array}{cc}
           \bm{I}_{N-1} & \bm{0} \\
           \bm{0} & 0 \\
         \end{array}
       \right)\otimes \bm{I}_{q}.
\end{eqnarray*}
Let $\bm{V}=\bm{Q}'\bm{W}$. We have $\bm{V}\sim \mathcal{N}_{Nq\times p}\left(\bm{Q}'\left(\bm{e}_{N}\otimes \bm{\theta}'\right),\,\bm{I}_{Nq}\otimes \bm{\Xi}\right)$ and
\begin{eqnarray}
Nq\,\bm{S}=\bm{V}'\left(
         \begin{array}{cc}
           \bm{I}_{(N-1)q} & \bm{0} \\
           \bm{0} & \bm{0} \\
         \end{array}
       \right)\bm{V}=\bm{V}'_{(1)}\bm{V}_{(1)},\label{S_newform}
 \end{eqnarray}
  with  \quad{ } $\bm{V}_{(1)}=\left[\bm{I}_{q(N-1)}\vdots \bm{0}\right]\bm{V}\sim \mathcal{N}_{(N-1)q\times p}\left(\bm{0},\quad{ } \bm{I}_{(N-1)q}\otimes \bm{\Xi}\right)$.
In the similar way as in \cite{ChetelatWells}, in the sequel, we let $n=N-1$ and $\bm{\Sigma}=N^{-1}\bm{\Xi}$. We also let $\bm{Y}=\bm{V}_{(1)}/\sqrt{Nq}$, i.e. we let $\bm{S}=\bm{Y}'\bm{Y}$. In classical inference where $n\geqslant pq$, the problem studied can be solved by using the results in \cite{SteinCharles}, \cite{Bilodeau}, \cite{Konno1990}, \cite{Konno} and references therein. To give another closely related reference, in the case where $\bm{\theta}$ is a vector with $n\geqslant p$, we also quote \cite{Perron} in the context of a bounded normal mean and \cite{STRAWDERMAN} in the context of elliptically symmetric distributions. We also quote \cite{Bodnar} who studied the similar estimation problem of $\bm{\theta}$ in the case where $q=1$ in the large-samples context.

 Nevertheless, more research needs to be done for the cases of high and ultra-high dimensional data i.e. the case where the total number of features $pq>n$ (or $pq>>n$).  The problem studied here is more general that the one where the total number of features $pq$ is bigger than the $n$ which does not need to tend to infinity as considered in \cite{Bodnar}. In this paper, not only we solve the problem in the context of high and ultra-high dimensional data, we push the boundaries further by considering the more general case of ultra-high dimensional data. Specifically, we consider the scenario where $p>nq$. Indeed, if $p>nq$, as this holds in our case, then $pq>nq^{2}>n$ whenever $q>1$. 
 With \eqref{XbarS} and \eqref{S_newform} in mind, to simplify the derivation of the main results of this paper, we consider the statistical model that satisfies the following assumption.
 \begin{assumption}\label{ass:model}
   We observe  two independent $p\times q$ random matrices $\bm{X}$ and $\bm{S}$ with $\bm{X}\sim \mathcal{N}_{p\times q}\left(\bm{\theta},\bm{\Sigma}\otimes \bm{I}_{q}\right)$ and $\bm{S}=\bm{Y}'\bm{Y}$ where $\bm{Y}\sim \mathcal{N}_{nq\times p}\left(\bm{0},\bm{I}_{nq}\otimes(\bm{\Sigma}/q)\right)$ with $\bm{\Sigma}$ a $p\times p$-symmetric and positive definite matrix.
 \end{assumption}
 Note that for the special case where $q=1$, the problem becomes the one studied in \cite{ChetelatWells}. Nevertheless, there is a major mistake in one of their main result. Thus, on the top of generalizing the problem studied in \cite{ChetelatWells}, we also revise their main result.  One of the main difficulty consists in the fact that, in cases where $p>nq$, the estimator of $\bm{\Sigma}$, denoted as $\bm{S}$, is singular  almost surely. Since $\bm{S}$ is singular almost surely, the traditional inverse $\bm{S}^{-1}$ does not exist, with probability one,  while for the cases where $pq\leqslant n$, such inverse of the  matrix $\bm{S}$ is used in James-Stein type estimators. Because of that, we need to adopt whenever needed the generalized inverse also known as Moore-Penrose inverse. Thus, let $A^{+}$ denote the Moore-Penrose inverse of the matrix $\bm{A}$. We also let 
  \begin{eqnarray}
   F=\mathrm{tr}(\bm{X}^{\top}\bm{S}^{+}\bm{X}), \quad{ } R=\textnormal{rank}(\bm{S}), \quad{ }  \bm{\delta}^{0}(\bm{X})=\bm{X}.\label{FRdelta0}
   \end{eqnarray}
   Further, for some real-valued function $r$ defined on $(0,\,\infty)$, let 
\begin{eqnarray}
\delta_{r}(\bm{X},\bm{S}) = \left(\bm{I}_{p} - \frac{r(F)}{F}\bm{S}\bm{S}^{+}\right)\bm{X}. \label{improvedEst}
\end{eqnarray}
Note that the classical MLE $\bm{\delta}^{0}(\bm{X})=\bm{X}=\delta_{0}(\bm{X},\bm{S})$. Further,
note that for the special case where $q=1$, the estimator in \eqref{improvedEst} yields the one given in
\cite{ChetelatWells}. In this paper, we derive the necessary and sufficient conditions for the estimator in \eqref{improvedEst}  to have a finite quadratic risk function.  As mentioned above, beyond the generalisation of the methods given in \cite{ChetelatWells}, the additional novelty consists in the fact that we also revise some main results in the above quoted paper. To this end, we also derive some important mathematical results which have interest in their own. Nevertheless, we are very pleased to acknowledge that our work was strongly
inspired by that of \cite{ChetelatWells}.  

   \begin{proposition}\label{Proposition2}
Suppose that \Cref{ass:model} holds and let $g(\bm{X},\bm{S})$ be a $p\times q$-matrix valued function defined on $\mathbb{R}^{p\times q}\times \mathbb{R}^{p\times p}$. Then\\
    $(i)$ 
    $\mathrm{E}_{\scriptscriptstyle \bm{\theta}}\biggl[\mathrm{tr}(g^{\top}(\bm{X},\bm{S})\bm{\Sigma}^{-1}(\bm{X}-\bm{\theta}))\biggr]=\mathrm{E}_{\scriptscriptstyle \bm{\theta}}
    \biggl[\mathrm{tr}(\nabla_Xg^{\top}(\bm{X},\bm{S}))\biggr]$, 
    \text{provided that $\mathrm{E}_{\scriptscriptstyle \bm{\theta}}\bigl[|\mathrm{tr}(\nabla_Xg^{\top}(\bm{X},\bm{S}))|\bigr]<\infty$;}\\
        $(ii)$ 
        $\mathrm{tr}\biggl(\nabla_Xg^{\top}(\bm{X},\bm{S})\biggr)=\ds\sum_{i,j}^{}{\frac{\partial g_{ij}(\bm{X},\bm{S})}{\partial {X}_{ij}}}$. 
\end{proposition}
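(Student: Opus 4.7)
The plan is to prove (ii) by a direct unpacking of the definitions and then establish (i) by reducing the matrix statement to the classical scalar Stein identity applied column by column to $\bm{X}$.

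Part (ii) is essentially bookkeeping. Since $\bm{X}$ is $p\times q$, the matrix $g^{\top}(\bm{X},\bm{S})$ is $q\times p$, and I would apply the index formula $(\nabla_{\bm{A}}\bm{B})_{ij}=\sum_{\alpha}\partial B_{\alpha j}/\partial A_{i\alpha}$ recalled in \Cref{Sec:fundamental} with $\bm{A}=\bm{X}$ and $\bm{B}=g^{\top}$. The diagonal entry then reads $(\nabla_{\bm{X}} g^{\top})_{ii}=\sum_{\alpha=1}^{q}\partial g_{i\alpha}/\partial X_{i\alpha}$, so summing over $i$ yields $\mathrm{tr}(\nabla_{\bm{X}} g^{\top})=\sum_{i,j}\partial g_{ij}/\partial X_{ij}$, which is (ii).

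For part (i), I would exploit the Kronecker structure in \Cref{ass:model}, which makes the $q$ columns $\bm{X}_{\cdot 1},\ldots,\bm{X}_{\cdot q}$ mutually independent with $\bm{X}_{\cdot j}\sim\mathcal{N}_{p}(\bm{\theta}_{\cdot j},\bm{\Sigma})$, and $\bm{S}$ independent of $\bm{X}$. Expanding the trace column by column gives
\begin{equation*}
\mathrm{tr}\bigl(g^{\top}(\bm{X},\bm{S})\,\bm{\Sigma}^{-1}(\bm{X}-\bm{\theta})\bigr)=\sum_{j=1}^{q}g_{\cdot j}^{\top}(\bm{X},\bm{S})\,\bm{\Sigma}^{-1}\bigl(\bm{X}_{\cdot j}-\bm{\theta}_{\cdot j}\bigr).
\end{equation*}
Conditioning on $\bm{S}$ together with all columns of $\bm{X}$ other than the $j$-th, the map $\bm{X}_{\cdot j}\mapsto g_{\cdot j}(\bm{X},\bm{S})$ is a function of a single $\mathcal{N}_{p}(\bm{\theta}_{\cdot j},\bm{\Sigma})$ vector, so the classical multivariate Stein identity produces
\begin{equation*}
\mathrm{E}_{\bm{\theta}}\bigl[g_{\cdot j}^{\top}\bm{\Sigma}^{-1}(\bm{X}_{\cdot j}-\bm{\theta}_{\cdot j})\bigm|\bm{S},\bm{X}_{\cdot k},\,k\neq j\bigr]=\sum_{i=1}^{p}\mathrm{E}_{\bm{\theta}}\Bigl[\tfrac{\partial g_{ij}}{\partial X_{ij}}\Bigm|\bm{S},\bm{X}_{\cdot k},\,k\neq j\Bigr].
\end{equation*}
Summing over $j$ and removing the conditioning via the tower property yields $\mathrm{E}_{\bm{\theta}}\bigl[\sum_{i,j}\partial g_{ij}/\partial X_{ij}\bigr]$, which by (ii) equals $\mathrm{E}_{\bm{\theta}}\bigl[\mathrm{tr}(\nabla_{\bm{X}} g^{\top})\bigr]$, as required.

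The main obstacle will be justifying the regularity needed for the column-wise Stein identity: $g(\cdot,\bm{S})$ must be weakly differentiable in $\bm{X}$ with locally integrable partial derivatives so that the scalar identity applies on each conditional slice, and Fubini must be available to interchange the sum over $j$, the conditional expectation, and the partial differentiation. The assumed integrability $\mathrm{E}_{\bm{\theta}}[|\mathrm{tr}(\nabla_{\bm{X}}g^{\top}(\bm{X},\bm{S}))|]<\infty$ is precisely what supplies the dominating bound that legitimizes these interchanges and kills the boundary terms arising in the standard integration-by-parts proof of Stein's lemma, so the whole argument comes down to verifying that this hypothesis is enough to carry each conditional step.
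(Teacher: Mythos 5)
Your proof is correct, and part (ii) coincides with the paper's computation; but for part (i) you take a genuinely different route. The paper whitens the whole matrix at once: it sets $\Tilde{\bm{X}}=\bm{A}^{-1}(\bm{X}-\bm{\theta})$ with $\bm{A}=\bm{\Sigma}^{1/2}$, so that $\Tilde{\bm{X}}$ has i.i.d.\ standard normal entries, writes $\mathrm{tr}(g^{\top}\bm{\Sigma}^{-1}(\bm{X}-\bm{\theta}))=\sum_{i,j}h^{\top}_{ij}\Tilde{X}_{ji}$ with $h=\bm{A}^{-1}g$, applies the scalar Stein identity (Lemma~1 of \cite{SteinCharles}) entrywise, and then undoes the change of variables by a chain-rule index computation in which the factors $\bm{A}\bm{A}^{-1}$ cancel, recovering $\mathrm{E}_{\scriptscriptstyle\bm{\theta}}[\mathrm{tr}(\nabla_X g^{\top})]$. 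You instead exploit the Kronecker structure $\bm{\Sigma}\otimes\bm{I}_q$ directly: the columns of $\bm{X}$ are independent $\mathcal{N}_p(\bm{\theta}_{\cdot j},\bm{\Sigma})$ and independent of $\bm{S}$, so you split the trace column by column, condition on $\bm{S}$ and the remaining columns, and invoke the multivariate Stein identity for $\mathcal{N}_p(\mu,\bm{\Sigma})$ on each slice before applying the tower property. Your version avoids the whitening and the chain-rule bookkeeping and makes transparent exactly where the column independence enters, at the price of citing the multivariate Stein identity (which is itself usually proved by the very whitening argument the paper carries out), so the paper's proof is the more self-contained of the two. Your closing caveat is apt but applies equally to the paper: both arguments apply Stein's lemma term by term (entrywise there, column-conditionally here), which strictly requires integrability of the individual derivative terms rather than only of the trace assumed in the statement; you are therefore at the same level of rigor as the paper's own proof.
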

The proof of this proposition is given in \Cref{sec:append}. By using \Cref{Proposition2}, we derive the following result which is very useful in deriving the main result of this paper.
   \begin{proposition}\label{Proposition1}
Suppose that \Cref{ass:model} holds along with $\textnormal{P}_{\scriptscriptstyle \bm{\theta}}(qR>2)=1$ and \eqref{FRdelta0}. Further, let $g(\bm{X},\bm{S})=r(F)\bm{S}\bm{S}^{+}\bm{X}/F$. 
Then
    \begin{flalign*}
        (i)  \quad  \mathrm{tr}(\nabla_{\bm{X}}g(\bm{X},\bm{S})^{\top})=2r^{\prime}(F)+\left[q\mathrm{tr}(\bm{S}\bm{S}^{+})-2\right]r(F)/F;&&
    \end{flalign*}
\begin{flalign*}
    (ii) \quad  \mathrm{E}_{\scriptscriptstyle \bm{\theta}}\left[\mathrm{tr}\left(g^{\top}(\bm{X},\bm{S})\bm{\Sigma}^{-1}(\bm{X}-\bm{\theta})\right)\right]
    =\mathrm{E}_{\scriptscriptstyle \bm{\theta}}\left[2r^{\prime}(F)+\left(q\mathrm{tr}(\bm{S}\bm{S}^{+})-2\right)r(F)/F\right].&&
\end{flalign*}
\end{proposition}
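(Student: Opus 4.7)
The plan is to prove part (i) by a direct component-wise computation of the divergence, and then derive part (ii) immediately by invoking part (i) of \Cref{Proposition2} (a Stein-type identity), treating \Cref{Proposition2} as a black box. Since $\bm{X}$ and $\bm{S}$ are independent by \Cref{ass:model}, the quantities $\bm{S}^{+}$ and $\bm{S}\bm{S}^{+}$ behave as constants when differentiating with respect to $\bm{X}$, so all the work in part (i) is pointwise calculus on the $p\times q$ matrix $\bm{X}$ with $\bm{S}$ held fixed.

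To execute part (i), I will use \Cref{Proposition2}(ii) to rewrite $\mathrm{tr}(\nabla_{\bm{X}} g^{\top})$ as $\sum_{i,j}\partial g_{ij}/\partial X_{ij}$, where
\[
g_{ij}(\bm{X},\bm{S})=\frac{r(F)}{F}(\bm{S}\bm{S}^{+}\bm{X})_{ij},\qquad (\bm{S}\bm{S}^{+}\bm{X})_{ij}=\sum_{k}(\bm{S}\bm{S}^{+})_{ik}X_{kj}.
\]
Applying the product rule splits the calculation into two pieces. For the first piece, I need $\partial F/\partial X_{ij}$; writing $F=\sum_{k,l,m}X_{lk}(\bm{S}^{+})_{lm}X_{mk}$ and using the symmetry of $\bm{S}^{+}$ yields $\partial F/\partial X_{ij}=2(\bm{S}^{+}\bm{X})_{ij}$, hence $\partial/\partial X_{ij}[r(F)/F]=2[r'(F)F-r(F)]F^{-2}(\bm{S}^{+}\bm{X})_{ij}$. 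Summing the resulting contribution over $i,j$ gives
\[
\frac{2(r'(F)F-r(F))}{F^{2}}\mathrm{tr}\bigl(\bm{X}^{\top}\bm{S}^{+}\bm{S}\bm{S}^{+}\bm{X}\bigr),
\]
and the Moore--Penrose identity $\bm{S}^{+}\bm{S}\bm{S}^{+}=\bm{S}^{+}$ collapses the trace to $F$, reducing this piece to $2r'(F)-2r(F)/F$. For the second piece, $\partial(\bm{S}\bm{S}^{+}\bm{X})_{ij}/\partial X_{ij}=(\bm{S}\bm{S}^{+})_{ii}$, whose double sum is $q\,\mathrm{tr}(\bm{S}\bm{S}^{+})$, contributing $q\,\mathrm{tr}(\bm{S}\bm{S}^{+})\,r(F)/F$. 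Adding the two pieces gives exactly the claimed identity.

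For part (ii), once part (i) is established, it suffices to verify the integrability hypothesis of \Cref{Proposition2}(i), namely that $\mathrm{E}_{\bm{\theta}}[|\mathrm{tr}(\nabla_{\bm{X}}g^{\top})|]<\infty$; this is where the assumption $\textnormal{P}_{\bm{\theta}}(qR>2)=1$ enters, ensuring that $F>0$ almost surely (so $g$ is well-defined and differentiable a.s.) and that the integrand derived in part (i) is a.s.\ finite so that Stein's identity applies. The main obstacle in the whole proof is bookkeeping around the singular matrix $\bm{S}$: one must be careful that the rank-$R$ projector $\bm{S}\bm{S}^{+}$ is not the identity (so the term $q\,\mathrm{tr}(\bm{S}\bm{S}^{+})=qR$ survives rather than collapsing to $pq$), and that the simplification in the first piece relies precisely on the generalized-inverse identity $\bm{S}^{+}\bm{S}\bm{S}^{+}=\bm{S}^{+}$ rather than any genuine inverse. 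With these two Moore--Penrose facts handled correctly, both parts follow.
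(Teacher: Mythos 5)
Your part (i) is exactly the paper's computation: it is the content of \Cref{Lemma3} (its parts (i), (ii) and (iv)), which the paper's proof of \Cref{Proposition1} simply cites after invoking \Cref{Proposition2}(ii). Your product-rule split, the identity $\partial F/\partial X_{ij}=2(\bm{S}^{+}\bm{X})_{ij}$, the collapse $\sum_{i,j}(\bm{S}^{+}\bm{X})_{ij}(\bm{S}\bm{S}^{+}\bm{X})_{ij}=\mathrm{tr}(\bm{X}^{\top}\bm{S}^{+}\bm{S}\bm{S}^{+}\bm{X})=F$ via $\bm{S}^{+}\bm{S}\bm{S}^{+}=\bm{S}^{+}$, and the term $q\,\mathrm{tr}(\bm{S}\bm{S}^{+})$ all match the paper. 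Part (ii) is likewise the same step as the paper's: apply the Stein-type identity of \Cref{Proposition2}(i) and substitute part (i).

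The one point that needs repair is your justification of the integrability proviso in \Cref{Proposition2}(i). You claim that $\textnormal{P}_{\scriptscriptstyle \bm{\theta}}(qR>2)=1$ makes $F>0$ and the integrand almost surely finite ``so that Stein's identity applies''; but almost sure finiteness of $\mathrm{tr}(\nabla_{\bm{X}}g^{\top}(\bm{X},\bm{S}))$ does not give $\mathrm{E}_{\scriptscriptstyle \bm{\theta}}\bigl[|\mathrm{tr}(\nabla_{\bm{X}}g^{\top}(\bm{X},\bm{S}))|\bigr]<\infty$, which is what \Cref{Proposition2}(i) actually requires. The mechanism in the paper is \Cref{Lemma4}: $\textnormal{P}_{\scriptscriptstyle \bm{\theta}}(qR>2)=1$ is equivalent to $\mathrm{E}_{\scriptscriptstyle \bm{\theta}}\left[1/F\right]<\infty$, and then, for $r$ and $r^{\prime}$ bounded (the setting in which the proposition is used), one bounds $\left|2r^{\prime}(F)+\left(q\mathrm{tr}(\bm{S}\bm{S}^{+})-2\right)r(F)/F\right|\leqslant 2c_{2}+c_{1}\left|q\min(nq,p)-2\right|/F$, whose expectation is finite; this is exactly the verification carried out in \Cref{Theorem2}(ii). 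With that substitution your argument is complete, and it then coincides with the paper's route; the paper's own proof of \Cref{Proposition1} also defers this verification rather than deriving it from almost sure finiteness.
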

The proof of this result is given in the \Cref{sec:append}. It should be noticed that, for the special case where $q=1$, Part~{\em (i)} of \Cref{Proposition1} yields Lemma~2 of \cite{ChetelatWells}.
   \section{Main results}\label{Sec:mainresult}
   In this section, we present the main result of this paper. In particular, we derive a theorem which demonstrates that, for the special case where $q=1$, if the additional assumption $\textnormal{P}_{\scriptscriptstyle \bm{\theta}}(R>2)=1$ is not taken into account, Theorem~2 of \cite{ChetelatWells}  does not hold. To illustrate this point, we prsent a 
   counterexample that contradicts  Theorem~2 of \cite{ChetelatWells}. 
Moreover, we derive a result which revise Theorem~2 of \cite{ChetelatWells}. On the top of that, we generalize the corrected version  
%
of Theorem 2 in \cite{ChetelatWells}. 
   \begin{lemma}\label{Lemma3}
    Let  $g(\bm{X},\bm{S})= (\bm{S}\bm{S}^{+}\bm{X})r(F)\big/F$ and suppose that \Cref{ass:model} holds along with \eqref{FRdelta0}. Then

    \begin{flalign*}
        (i) & \quad \frac{\partial F}{\partial \bm{X}} = 2(\bm{S}^{+}\bm{X}); &&
    \end{flalign*}
    \begin{flalign*}
        (ii) & \quad \left(\frac{\partial \bm{S}\bm{S}^{+}\bm{X}}{\partial X_{ij}}\right)_{kl} = (\bm{S}\bm{S}^{+})_{ki}\delta_{lj}; &&
    \end{flalign*}
    \begin{flalign*}
        (iii) & \quad \frac{\partial g_{kl}(\bm{X},\bm{S})}{\partial X_{ij}} = \frac{2(Fr^{\prime}(F)-r(F))}{F^2}(\bm{S}^{+}\bm{X})_{ij}(\bm{S}\bm{S}^{+}\bm{X})_{kl} + \frac{r(F)}{F}(\bm{S}\bm{S}^{+})_{ki}\delta_{lj}; &&
    \end{flalign*}
    \begin{flalign*}
        (iv) & \quad \sum_{i,j} \frac{\partial g_{ij}(\bm{X},\bm{S})}{\partial X_{ij}} = 2r^{\prime}(F) + \frac{r(F)}{F}\left[q\mathrm{tr}(\bm{S}\bm{S}^{+}) - 2\right]. &&
    \end{flalign*}
\end{lemma}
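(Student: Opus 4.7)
The plan is to treat the four parts sequentially, exploiting the fact that each part feeds into the next, and to rely throughout on the Moore--Penrose identities $\bm{S}^{+}=(\bm{S}^{+})^{\top}$ and $\bm{S}^{+}\bm{S}\bm{S}^{+}=\bm{S}^{+}$ (both valid since $\bm{S}$ is symmetric positive semi-definite almost surely under \Cref{ass:model}). Working throughout in coordinates, which is what Part~$(ii)$ of \Cref{Proposition2} tells us is the right thing to do, keeps the argument elementary.

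For Part~$(i)$, I would expand $F=\mathrm{tr}(\bm{X}^{\top}\bm{S}^{+}\bm{X})=\sum_{j,k,l}X_{kj}S^{+}_{kl}X_{lj}$ and differentiate in $X_{ij}$; the two contributions combine via the symmetry of $\bm{S}^{+}$ to give $2(\bm{S}^{+}\bm{X})_{ij}$. For Part~$(ii)$, I would simply write $(\bm{S}\bm{S}^{+}\bm{X})_{kl}=\sum_{m}(\bm{S}\bm{S}^{+})_{km}X_{ml}$ and differentiate; only the term $m=i$, $l$ fixed, survives, producing $(\bm{S}\bm{S}^{+})_{ki}\delta_{lj}$.

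Part~$(iii)$ is then a direct application of the product and chain rules to the factorisation $g_{kl}(\bm{X},\bm{S})=\tfrac{r(F)}{F}(\bm{S}\bm{S}^{+}\bm{X})_{kl}$. The scalar factor contributes $\tfrac{Fr'(F)-r(F)}{F^{2}}\cdot\partial F/\partial X_{ij}$, which by Part~$(i)$ equals $\tfrac{2(Fr'(F)-r(F))}{F^{2}}(\bm{S}^{+}\bm{X})_{ij}$; multiplying by $(\bm{S}\bm{S}^{+}\bm{X})_{kl}$ gives the first summand, and the matrix factor contributes the second summand via Part~$(ii)$.

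For Part~$(iv)$, I would set $k=i$, $l=j$ in $(iii)$ and sum. The second summand collapses immediately: $\sum_{i,j}(\bm{S}\bm{S}^{+})_{ii}\delta_{jj}=q\,\mathrm{tr}(\bm{S}\bm{S}^{+})$, where the factor $q$ arises because $j$ ranges over $q$ columns. The first summand reduces to
\[
\sum_{i,j}(\bm{S}^{+}\bm{X})_{ij}(\bm{S}\bm{S}^{+}\bm{X})_{ij}=\mathrm{tr}\!\left(\bm{X}^{\top}\bm{S}^{+}\bm{S}\bm{S}^{+}\bm{X}\right)=\mathrm{tr}(\bm{X}^{\top}\bm{S}^{+}\bm{X})=F,
\]
using the Moore--Penrose identity $\bm{S}^{+}\bm{S}\bm{S}^{+}=\bm{S}^{+}$; after simplification the claimed expression $2r'(F)+\tfrac{r(F)}{F}[q\,\mathrm{tr}(\bm{S}\bm{S}^{+})-2]$ falls out. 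The main subtlety to watch for is precisely this Moore--Penrose cancellation in Part~$(iv)$, together with the fact that $\bm{S}^{+}$ is being treated as a function of $\bm{S}$ alone and so contributes no $\partial/\partial \bm{X}$ derivatives; apart from that, all four steps are routine coordinate-level calculus.
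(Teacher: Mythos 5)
Your proposal is correct and follows essentially the same route as the paper's own proof: coordinate-wise differentiation, the product/chain rule combining Parts~(i) and (ii) to get (iii), and in Part~(iv) the collapse $\sum_{i,j}(\bm{S}^{+}\bm{X})_{ij}(\bm{S}\bm{S}^{+}\bm{X})_{ij}=\mathrm{tr}(\bm{X}^{\top}\bm{S}^{+}\bm{S}\bm{S}^{+}\bm{X})=F$ via $\bm{S}^{+}\bm{S}\bm{S}^{+}=\bm{S}^{+}$, with the factor $q$ arising from summing $\delta_{jj}$ over the $q$ columns. In fact you are slightly more explicit than the paper on Part~(i), which it dismisses as classical calculus, so nothing is missing.
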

The proof of this lemma is given in the \Cref{sec:appendMainres}. We also derive the following lemma which is crucial in revising Theorem~2 of \cite{ChetelatWells}.

\begin{lemma}\label{Lemma4}
Suppose that \Cref{ass:model} holds along with \eqref{FRdelta0}. 
Then,
    $\mathrm{E}_{\scriptscriptstyle \bm{\theta}}\left[1/F\right] < \infty$ \text{ if and only if } $\textnormal{P}_{\scriptscriptstyle \bm{\theta}}(qR > 2) = 1$.
    \end{lemma}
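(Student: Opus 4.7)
The plan is to reduce $F$ to a canonical form and then sandwich it between two explicit multiples of a single (conditionally) noncentral chi-squared random variable, after which the classical fact that $\mathrm{E}[1/\chi^2_k(\lambda)]<\infty$ iff $k>2$ finishes both directions.

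First I would observe that under \Cref{ass:model} the matrix $\bm{Y}$ has full row rank $nq$ almost surely (Gaussian with positive definite row-covariance and $p>nq$), so $R=\mathrm{rank}(\bm{S})=nq$ a.s.\ and the condition $\mathrm{P}_{\bm{\theta}}(qR>2)=1$ collapses to the arithmetic inequality $nq^2>2$. I would then whiten by setting $\bm{U}=\bm{\Sigma}^{-1/2}\bm{X}$, so that $\bm{U}\sim\mathcal{N}_{p\times q}(\bm{\eta},\bm{I}_p\otimes\bm{I}_q)$ with $\bm{\eta}=\bm{\Sigma}^{-1/2}\bm{\theta}$ and $\bm{U}$ independent of $\bm{S}$. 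Defining $\bm{M}=\bm{\Sigma}^{1/2}\bm{S}^{+}\bm{\Sigma}^{1/2}$, which is symmetric positive semidefinite of rank $nq$ a.s.\ and independent of $\bm{U}$, the cyclic property of the trace gives $F=\mathrm{tr}(\bm{U}^{\top}\bm{M}\bm{U})$. Writing the compact spectral decomposition $\bm{M}=\bm{O}\bm{\Lambda}\bm{O}^{\top}$ with $\bm{O}$ a $p\times nq$ matrix of orthonormal columns and $\bm{\Lambda}=\mathrm{diag}(\lambda_{1},\dots,\lambda_{nq})$ with $\lambda_{i}>0$, and setting $\hat{\bm{U}}=\bm{O}^{\top}\bm{U}$, the statistic becomes $F=\sum_{i=1}^{nq}\lambda_{i}\hat{\xi}_{i}$ where $\hat{\xi}_{i}=\sum_{j=1}^{q}\hat{U}_{ij}^{2}$.

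Conditionally on $\bm{S}$, the entries of $\hat{\bm{U}}$ are independent $\mathcal{N}((\bm{O}^{\top}\bm{\eta})_{ij},1)$, so $\hat{\Xi}:=\sum_{i=1}^{nq}\hat{\xi}_{i}$ is a noncentral chi-squared $\chi^{2}_{nq^{2}}(\|\bm{O}^{\top}\bm{\eta}\|_{F}^{2})$ and one has the pointwise sandwich $\lambda_{\min}(\bm{\Lambda})\hat{\Xi}\leq F\leq\lambda_{\max}(\bm{\Lambda})\hat{\Xi}$. An Ostrowski-type comparison applied to $\bm{M}$ yields $\lambda_{\min}(\bm{\Sigma})/\lambda_{\max}(\bm{S})\leq\lambda_{\min}(\bm{\Lambda})$ and $\lambda_{\max}(\bm{\Lambda})\leq\lambda_{\max}(\bm{\Sigma})/\lambda_{\min}^{+}(\bm{S})$, where $\lambda_{\min}^{+}(\bm{S})$ denotes the smallest positive eigenvalue of $\bm{S}$. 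For the sufficient direction, assuming $nq^{2}>2$, I would use $1/F\leq \lambda_{\max}(\bm{S})/(\lambda_{\min}(\bm{\Sigma})\hat{\Xi})$, the standard bound $\mathrm{E}[1/\chi^{2}_{k}(\lambda)]\leq 1/(k-2)$ for all $\lambda\geq 0$ when $k>2$ (which follows from the Poisson mixture representation of the noncentral $\chi^{2}$ together with its stochastic monotonicity in $\lambda$), and the finiteness of $\mathrm{E}[\lambda_{\max}(\bm{S})]$ (a standard Wishart moment bound), combined via iterated expectation, to conclude that $\mathrm{E}_{\bm{\theta}}[1/F]<\infty$. For the necessary direction I would argue the contrapositive: if $nq^{2}\leq 2$, then $1/F\geq\lambda_{\min}^{+}(\bm{S})/(\lambda_{\max}(\bm{\Sigma})\hat{\Xi})$, and because the density of $\chi^{2}_{nq^{2}}(\lambda)$ near the origin still behaves as $x^{nq^{2}/2-1}$ for every value of $\lambda$, one obtains $\mathrm{E}[1/\hat{\Xi}\mid\bm{S}]=\infty$ a.s., which forces $\mathrm{E}_{\bm{\theta}}[1/F]=\infty$ since $\lambda_{\min}^{+}(\bm{S})>0$ a.s.

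The main obstacle I anticipate is the careful treatment of the Ostrowski-type eigenvalue comparison for $\bm{M}=\bm{\Sigma}^{1/2}\bm{S}^{+}\bm{\Sigma}^{1/2}$ in the singular setting (requiring a restriction to the $nq$-dimensional range of $\bm{S}$) and the verification that $\mathrm{E}[1/\chi^{2}_{k}(\lambda)]$ is controlled uniformly in the noncentrality $\lambda$. This uniformity is precisely what ensures that the finiteness criterion depends only on the degrees of freedom $nq^{2}=qR$ and not on the nuisance parameters $\bm{\theta}$ and $\bm{\Sigma}$, consistent with the statement of \Cref{Lemma4}.
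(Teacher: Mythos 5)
Your proposal is correct, and it ends exactly where the paper's proof ends: $F$ is sandwiched between extreme-eigenvalue multiples of a (conditionally) noncentral $\chi^{2}$ with $qR$ degrees of freedom, and the equivalence then follows from $\mathrm{E}\left[1/\chi^{2}_{k}(\lambda)\right]<\infty$ iff $k>2$, with the Poisson-mixture bound $1/(k+2Z-2)$ supplying uniformity in the noncentrality and a Wishart trace bound controlling the eigenvalue factor. Where you genuinely differ is in how the chi-squared is produced. The paper truncates $\bm{X}$ to its first $R$ rows, whitens with $\bm{A}(R)=\left(\bm{C}(R)\bm{\Sigma}\bm{C}^{\top}(R)\right)^{1/2}$, and conditions on $R$ throughout; you instead whiten globally, form $\bm{M}=\bm{\Sigma}^{1/2}\bm{S}^{+}\bm{\Sigma}^{1/2}$, project onto its range via a compact spectral decomposition, and control the positive spectrum of $\bm{M}$ by Ostrowski's congruence theorem (which applies here without difficulty since $\bm{\Sigma}^{1/2}$ is nonsingular, so the "obstacle" you flag is not a real one). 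Your route buys two things: the projected matrix $\bm{O}^{\top}\bm{U}$ is exactly standard normal given $\bm{S}$, so the sandwich lives on the correct subspace and you never need to compare $\mathrm{tr}(\bm{X}^{\top}\bm{S}^{+}\bm{X})$ with the norm of the first $R$ rows of $\bm{X}$ — a comparison the paper's inequality \eqref{ineq_Fmain1} asserts without addressing how the range of $\bm{S}^{+}$ sits relative to those coordinates — and it dispenses with conditioning on $R$, which is almost surely constant anyway. One small caveat: you invoke $p>nq$ to set $R=nq$ a.s.; that restriction belongs to the paper's surrounding scenario but is not part of \Cref{ass:model}, so for the lemma as stated you should replace $nq$ by $\min(nq,p)$ and $nq^{2}$ by $q\min(nq,p)=qR$ throughout — your argument then goes through verbatim.
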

    \begin{proof}
        Assume that $\textnormal{P}_{\scriptscriptstyle \bm{\theta}}(qR>2)=1$ and let $\bm{C}(R)$ be $R \times p$-matrix of the form $\bm{C}(R)=[\bm{I}_{R}\vdots0_{R\times (p-R)}]$ and let $\bm{X}_{(1)} = \bm{C}(R)\bm{X}$.   Let $\lambda^{\dag}_{\min}(\bm{S}^{+})$ and $\lambda_{\max}^{\dag}(\bm{S}^{+})$ be the smallest and biggest nonzero eigenvalues of $\bm{S}^{+}$ respectively. Since $\bm{S}^{+}$ is semi-positive definite,  
        \begin{flalign}
           \lambda_{\min}^{\dag}(\bm{S}^{+}){ \textnormal{tr}(\bm{X}^{\top}_{(1)}\bm{X}_{(1)})}\leqslant { \textnormal{tr}(\bm{X}^{\top}\bm{S}^{+}\bm{X})} \leqslant \lambda_{\max}^{\dag}(\bm{S}^{+}){ \textnormal{tr}(\bm{X}^{\top}_{(1)}\bm{X}_{(1)})}.\label{ineq_Fmain1}
       \end{flalign}
Since $\bm{X}\sim \mathcal{N}_{p\times q}(\bm{\theta},\bm{\Sigma}\otimes \bm{I}_q)$, we have $$\bm{X}_{(1)}=\bm{C}(R)\bm{X}\big|R\sim \mathcal{N}_{R\times q}(\bm{C}(R)\bm{\theta},(\bm{C}(R)\bm{\Sigma}\bm{C}^{\top}(R))\otimes \bm{I}_q).$$ Note that, since $\bm{\Sigma}$ is positive definite matrix, $\bm{C}(R)\bm{\Sigma}\bm{C}^{\top}(R)$ is positive definite matrix with probability one. Let $\bm{A}(R)=\left(\bm{C}(R)\bm{\Sigma}\bm{C}^{\top}(R)\right)^{1/2}$  and
let $\bm{U}=\bm{A}^{-1}(R)\bm{X}_{(1)}$. Then,
\begin{flalign}
   \lambda_{\min}(\bm{A}^{2}(R))\textnormal{tr}(\bm{U}^{\top}\bm{U})\leqslant \textnormal{tr}(\bm{U}^{\top}\bm{A}^{2}(R)\bm{U}) \leqslant \textnormal{tr}(\bm{U}^{\top}\bm{U}) \lambda_{\max}(\bm{A}^{2}(R)),\label{ineq_Fmain2}
\end{flalign}
where $ \lambda_{\min}\left(\bm{A}^{2}(R)\right)$ and $\lambda_{\max}\left(\bm{A}^{2}(R)\right)$ are the smallest and biggest eigenvalues of the positive definite matrix $\bm{A}(R)$ respectively.
       Therefore, by \eqref{ineq_Fmain1} and \eqref{ineq_Fmain2}, we get
        \begin{eqnarray}
           \frac{1}{F}\leqslant \frac{1}{\lambda^{\dag}_{\min}(\bm{S}^{+})\lambda_{\min}(\bm{A}^{2}(R))\textnormal{tr}(\bm{U}^{\top}\bm{U})}
           =\frac{\lambda_{\max}^{\dag}(\bm{S})\lambda_{\max}(\bm{A}^{-2}(R))}{\textnormal{tr}(\bm{U}^{\top}\bm{U})}
           =\frac{\lambda_{\max}^{\dag}(\bm{S})\lambda_{\max}(\bm{A}^{-2}(R))}{\textnormal{vec}^{\top}(\bm{U})\textnormal{vec}(\bm{U})}\label{ineqF}
       \end{eqnarray}
       where $\lambda_{\max}^{\dag}(\bm{S})$ and $\lambda_{\max}\left(\bm{A}^{-2}(R)\right)$ are the biggest nonzero eigenvalues of $(\bm{S}^{+})^{+}=\bm{S}$ and $(\bm{A}^{2}(R))^{+}=\bm{A}^{-2}(R)$ respectively.
We also have
       \begin{flalign*}
           \textnormal{vec}(\bm{U})=\textnormal{vec}(\bm{A}^{-1}(R)\bm{X}_{(1)})=(\bm{I}_{q}\otimes \bm{A}^{-1}(R))\textnormal{vec}(\bm{X}_{(1)}).
       \end{flalign*}
       Then,
       \begin{flalign*}
           \textnormal{vec}(\bm{U})\big|R \sim \mathcal{N}_{qR}\left(\left(\bm{I}_{q}\otimes \bm{A}^{-1}(R)\right)\textnormal{vec}(\bm{C}(R)\bm{\theta}),\,(\bm{I}_{q}\otimes \bm{A}^{-1}(R))(\bm{I}_{q}\otimes \bm{A}^{2}(R))(\bm{I}_{q}\otimes \bm{A}^{-1}(R))^{\top}\right)
       \end{flalign*}
       Therefore,
       \begin{flalign}
             \textnormal{vec}(\bm{U})\big|R \sim \mathcal{N}_{qR}\left((\bm{I}_{q}\otimes \bm{A}^{-1}(R))\textnormal{vec}(\bm{C}(R)\bm{\theta}),\bm{I}_{qR}\right).\label{dvecU}
       \end{flalign}
       Note that $\lambda_{\max}^{\dag}(\bm{S})$ depends on $\bm{S}$, $\lambda_{\max}(\bm{A}^{-2}(R))$ depends on $R$, and $\bm{U}$ depends on $R$ and $\bm{X}$. Since, conditionally to $R$, $\bm{S}$ and $\bm{X}$ are independent we get
       \begin{flalign*}
         \textnormal{E}_{\scriptscriptstyle \bm{\theta}}\left[\frac{\lambda_{\max}^{\dag}(\bm{S})\lambda_{\max}(\bm{A}^{-2}(R))}
         {\textnormal{vec}^{\top}(\bm{U})\textnormal{vec}(\bm{U})}\right] &= \textnormal{E}_{\scriptscriptstyle \bm{\theta}}\left[\textnormal{E}_{\scriptscriptstyle \bm{\theta}}\left[\frac{\lambda_{\max}^{\dag}(\bm{S})\lambda_{\max}(\bm{A}^{-2}(R))}{\textnormal{vec}^{\top}(\bm{U})
         \textnormal{vec}(\bm{U})}\big|R\right]\right]\\&=\textnormal{E}_{\scriptscriptstyle \bm{\theta}}\left[\lambda_{\max}(\bm{A}^{-2}(R))\textnormal{E}_{\scriptscriptstyle \bm{\theta}}\left[\lambda_{\max}^{\dag}(\bm{S})\big|R\right]
         \textnormal{E}_{\scriptscriptstyle \bm{\theta}}\left[\frac{1}{\textnormal{vec}^{\top}(\bm{U})\textnormal{vec}(\bm{U})}\big|R\right]\right].&&
       \end{flalign*}
        Further, let $\lambda_{\max}(\bm{\Sigma})$ be the biggest eigenvalue of $\bm{\Sigma}$. Then,
       \begin{flalign*}
           \lambda_{\max}^{\dag}(\bm{S}) \leqslant \textnormal{tr}(\bm{S})=\textnormal{tr}(\bm{\Sigma}^{\frac{1}{2}}\bm{\Sigma}^{-\frac{1}{2}}\bm{S}\bm{\Sigma}^{-\frac{1}{2}}
           \bm{\Sigma}^{\frac{1}{2}})
           \leqslant &\lambda_{\max}(\bm{\Sigma})\textnormal{tr}((\bm{Y}\bm{\Sigma}^{-\frac{1}{2}})^{\top}\bm{Y}\bm{\Sigma}^{-\frac{1}{2}})\\&
           =\lambda_{\max}(\bm{\Sigma})\textnormal{vec}^{\top}(\bm{Y}\bm{\Sigma}^{-\frac{1}{2}})\textnormal{vec}(\bm{Y}\bm{\Sigma}^{-\frac{1}{2}}),&&
       \end{flalign*}
       where $\textnormal{vec}(\bm{Y}\bm{\Sigma}^{-\frac{1}{2}}) \sim \mathcal{N}_{npq}(0,(\bm{I}_{p}/q) \otimes \bm{I}_{nq})$. Therefore,  we get
       \begin{flalign*}
\textnormal{E}_{\scriptscriptstyle \bm{\theta}}[\lambda_{\max}^{\dag}(\bm{S})] \leqslant& \lambda_{\max}(\bm{\Sigma})\textnormal{E}_{\scriptscriptstyle \bm{\theta}}[\textnormal{vec}^{\top}(\bm{Y}\bm{\Sigma}^{-\frac{1}{2}})\textnormal{vec}(\bm{Y}\bm{\Sigma}^{-\frac{1}{2}})]
           \\&=\lambda_{\max}(\bm{\Sigma})\textnormal{tr}(\bm{I}_{p} \otimes \bm{I}_{nq})/q=\lambda_{\max}(\bm{\Sigma})\textnormal{tr}(I_{npq})/q=\lambda_{\max}(\bm{\Sigma})np. 
       \end{flalign*}
       Hence
       \begin{eqnarray}
        \textnormal{E}_{\scriptscriptstyle \bm{\theta}}[\lambda_{\max}^{\dag}(\bm{S})] \leqslant \lambda_{\max}(\bm{\Sigma})np. \label{boundLambdamax}
        \end{eqnarray}
Since $\textnormal{vec}(\bm{U})\big|R \sim \mathcal{N}_{qR}\left((\bm{I}_{q}\otimes \bm{A}^{-1}(R))\textnormal{vec}(\bm{C}(R)\bm{\theta}),\bm{I}_{qR}\right)$, we get
       \begin{eqnarray}
           \textnormal{vec}^{\top}(\bm{U})\textnormal{vec}(\bm{U})\big|R\sim \chi^{2}_{qR}(\delta_R)\label{dvecUvecU}
       \end{eqnarray}
       where $\delta_R = \left((\bm{I}_{q}\otimes \bm{A}^{-1}(R))\textnormal{vec}(\bm{C}(R)\bm{\theta})\right)^{\top}\left((\bm{I}_{q}\otimes \bm{A}^{-1}(R))\textnormal{vec}(\bm{C}(R)\bm{\theta})\right)$. 
       Let $Z$ be a random variable such that $Z\big|R\sim \textnormal{Poisson}(\delta_R/2)$. By \eqref{dvecUvecU} We have
       \begin{flalign*}
            &\textnormal{E}_{\scriptscriptstyle \bm{\theta}}\left[(\textnormal{vec}^{\top}(\bm{U})\textnormal{vec}(\bm{U}))^{-1}\big|R\right]=
           \textnormal{E}_{\scriptscriptstyle \bm{\theta}}\left[\textnormal{E}_{\scriptscriptstyle \bm{\theta}}[(\textnormal{vec}^{\top}(\bm{U})\textnormal{vec}(\bm{U}))^{-1}\big|R,Z]\big|R\right]
           = \textnormal{E}_{\scriptscriptstyle \bm{\theta}}\left[\textnormal{E}_{\scriptscriptstyle \bm{\theta}}[(\chi^{2}_{qR+2Z})^{-1}\big|R,Z]\big|R\right]. 
       \end{flalign*}
       Further, since $\textnormal{P}_{\scriptscriptstyle \bm{\theta}}(qR>2)=\textnormal{P}_{\scriptscriptstyle \bm{\theta}}(qR\geq 3)=1$ and $\textnormal{P}_{\scriptscriptstyle \bm{\theta}}\left(Z\geq 0\right)=1$, then 
       $qR+2Z-2\geqslant1$ with probability one and then,
       \begin{flalign*}
            &\textnormal{E}_{\scriptscriptstyle \bm{\theta}}\left[(\textnormal{vec}^{\top}(\bm{U})\textnormal{vec}(\bm{U}))^{-1}\big|R\right]
           =\textnormal{E}_{\scriptscriptstyle \bm{\theta}}\left[\frac{2^{-1}\Gamma\left(\frac{qR+2Z}{2}-1\right)}{\Gamma\left(\frac{qR+2Z}{2}\right)}\Big|R\right].
       \end{flalign*}
       and then, since $qR+2Z-2\geq 1$ with probability one,
        \begin{eqnarray*}
           \textnormal{E}_{\scriptscriptstyle \bm{\theta}}[(\textnormal{vec}^{\top}(\bm{U})\textnormal{vec}(\bm{U}))^{-1}\big |R]
           =\textnormal{E}_{\scriptscriptstyle \bm{\theta}}\left[\ds{\frac{1}{Rq+2Z-2}}\big|R\right] \leqslant 1,
       \end{eqnarray*}
       almost surely. Therefore, together with \eqref{ineqF}, we get
    \begin{flalign*}
       \textnormal{E}_{\scriptscriptstyle \bm{\theta}}\left[\frac{1}{F}\right]\leqslant \textnormal{E}_{\scriptscriptstyle \bm{\theta}}\left[\lambda_{\max}(\bm{A}^{-2}(R))\textnormal{E}_{\scriptscriptstyle \bm{\theta}}\left[\lambda_{\max}^{\dag}(\bm{S})\big|R\right]\textnormal{E}_{\scriptscriptstyle \bm{\theta}}
       \left[\frac{1}{\textnormal{vec}^{\top}(\bm{U})\textnormal{vec}(\bm{U})}\big|R\right]\right] \leqslant &\textnormal{E}_{\scriptscriptstyle \bm{\theta}}\left[\lambda_{\max}(\bm{A}^{-2}(R))\textnormal{E}_{\scriptscriptstyle \bm{\theta}}\left[\lambda_{\max}^{\dag}(\bm{S})\big|R\right]\right]
       .&&
    \end{flalign*}
    This gives
         $\ds\textnormal{E}_{\scriptscriptstyle \bm{\theta}}\left[1/F\right]\leqslant
         \textnormal{E}_{\scriptscriptstyle \bm{\theta}}\left[\textnormal{tr}(\bm{A}^{-2}(R))\textnormal{E}_{\scriptscriptstyle \bm{\theta}}\left[\textnormal{tr}(\bm{S})\big|R\right]\right]\leqslant np\lambda_{\max}(\bm{\Sigma})\textnormal{E}_{\scriptscriptstyle \bm{\theta}}\left[\textnormal{tr}(\bm{A}^{-2}(R))\right]$.
    Note that $ \textnormal{P}_{\scriptscriptstyle \bm{\theta}}\left(3\leqslant qR\leqslant qp\right)=1$ and then
    \begin{eqnarray}
    \textnormal{E}_{\scriptscriptstyle \bm{\theta}}\left[\textnormal{tr}(\bm{A}^{-2}(R))\right]=\sum_{j=\max\{\lfloor 3/q\rfloor,1\}}^{p}\textnormal{tr}(\bm{A}^{-2}(j))
    \textnormal{P}_{\scriptscriptstyle \bm{\theta}}\left(R=j\right)\leqslant \sum_{j=\max\{\lfloor 3/q\rfloor,1\}}^{p}\textnormal{tr}(\bm{A}^{-2}(j))\label{traceofinverseofA}
    \end{eqnarray}
    where $\lfloor x\rfloor$ denotes the largest integer less than or equal to $x$, for $j=\max\{\lfloor 3/q\rfloor,1\},\max\{\lfloor 3/q\rfloor,1\}+1,\dots,p-1$, $\bm{A}^{2}(j)=[\bm{I}_{j}\vdots0_{j\times (p-j)}]\bm{\Sigma}[\bm{I}_{j}\vdots0_{j\times (p-j)}]^{\top}$ and, we set $\bm{A}^{2}(p)=\bm{\Sigma}$. Thus, for $j=\max\{\lfloor 3/q\rfloor,1\},\max\{\lfloor 3/q\rfloor,1\}+1,\dots,p$, $\bm{A}^{2}(j)$ is positive definite matrix and then, $0<\textnormal{tr}(\bm{A}^{-2}(j))<\infty$. Hence,
    $0<\ds\sum_{j=\max\{\lfloor 3/q\rfloor,1\}}^{p}\textnormal{tr}(\bm{A}^{-2}(j))<+\infty$. 
Therefore, together with \eqref{boundLambdamax} and \eqref{traceofinverseofA}, we get
    \begin{flalign*}
        \textnormal{E}_{\scriptscriptstyle \bm{\theta}}\left[1/F\right]\leqslant \textnormal{E}_{\scriptscriptstyle \bm{\theta}}\left[\lambda_{\max}^{\dag}(\bm{S})\lambda_{\max}(\bm{A}^{-2}(R))\right]
        \leqslant np\lambda_{\max}(\bm{\Sigma})\sum_{j=\max\{\lfloor 3/q\rfloor,1\}}^{p}\textnormal{tr}(\bm{A}^{-2}(j))<\infty.
    \end{flalign*}
    Hence,
    $\textnormal{E}_{\scriptscriptstyle \bm{\theta}}\left[1/F\right]\leqslant np\lambda_{\max}(\bm{\Sigma})\ds\sum_{j=\max\{\lfloor 3/q\rfloor,1\}}^{p}\textnormal{tr}(\bm{A}^{-2}(j)) < \infty.$\\
   Conversely, assume that $\textnormal{E}_{\scriptscriptstyle \bm{\theta}}[1/F]<\infty$. From \eqref{ineq_Fmain1} and \eqref{ineq_Fmain2}, we have
    \begin{flalign*}
        \frac{1}{\lambda_{\max}^{\dag}(\bm{S}^{+})\lambda_{\max}(\bm{A}^{-2}(R))\textnormal{vec}^{\top}(\bm{U})\textnormal{vec}(\bm{U})}
        =\frac{\lambda_{\min}^{\dag}(\bm{S})\lambda_{\min}(\bm{A}^{-2}(R))}{\textnormal{vec}^{\top}(\bm{U})\textnormal{vec}(\bm{U})}\leqslant \frac{1}{F},
    \end{flalign*}
    where $\lambda_{\min}^{\dag}(\bm{S})$ is the smallest nonzero eigenvalue of $\bm{S}$. Again, note that $\lambda_{\min}^{\dag}(\bm{S})$ depends on $\bm{S}$, $\lambda_{\min}(\bm{A}^{-2}(R))$ depends on $R$ only,  and $\bm{U}$ depends on $R$ and $\bm{X}$. Then, since, conditionally to $R$, $\bm{S}$ and $\bm{X}$ are independent, we get
           \begin{flalign*}
         \textnormal{E}_{\scriptscriptstyle \bm{\theta}}&\left[\frac{\lambda_{\min}^{\dag}(\bm{S})\lambda_{\min}(\bm{A}^{-2}(R))}{\textnormal{vec}^{\top}(\bm{U})\textnormal{vec}(\bm{U})}\right] = \textnormal{E}_{\scriptscriptstyle \bm{\theta}}\left[\textnormal{E}_{\scriptscriptstyle \bm{\theta}}\left[\frac{\lambda_{\min}^{\dag}(\bm{S})\lambda_{\min}(\bm{A}^{-2}(R))}{\textnormal{vec}^{\top}(\bm{U})
         \textnormal{vec}(\bm{U})}\big|R\right]\right]\\
         &=\textnormal{E}_{\scriptscriptstyle \bm{\theta}}\left[\lambda_{\max}(\bm{A}^{-2}(R))\textnormal{E}_{\scriptscriptstyle \bm{\theta}}\left[\lambda_{\min}^{\dag}(\bm{S})\big|R\right]
         \textnormal{E}_{\scriptscriptstyle \bm{\theta}}\left[\frac{1}{\textnormal{vec}^{\top}(\bm{U})\textnormal{vec}(\bm{U})}\big|R\right]\right]\leqslant \textnormal{E}_{\scriptscriptstyle \bm{\theta}}\left[\frac{1}{F}\right]<\infty. 
       \end{flalign*}
       Then,
       \begin{flalign}
    \textnormal{P}_{\scriptscriptstyle \bm{\theta}}\left(\lambda_{\max}(\bm{A}^{-2}(R))\textnormal{E}_{\scriptscriptstyle \bm{\theta}}\left[\lambda_{\min}^{\dag}(\bm{S})\big|R\right]\textnormal{E}_{\scriptscriptstyle \bm{\theta}}\left[\frac{1}
    {\textnormal{vec}^{\top}(\bm{U})\textnormal{vec}(\bm{U})}\big|R\right]<\infty \right)=1.\label{ineqR1}
       \end{flalign}
       Since $\textnormal{E}_{\scriptscriptstyle \bm{\theta}}\left[\lambda_{\max}^{\dag}(\bm{S})\right]\leqslant np\lambda_{\max}(\bm{\Sigma})<\infty$, we get $0<\textnormal{E}_{\scriptscriptstyle \bm{\theta}}\left[\lambda_{\min}^{\dag}(\bm{S})\big|R\right]\leqslant\textnormal{E}_{\scriptscriptstyle \bm{\theta}}\left[\lambda_{\max}^{\dag}(\bm{S})\big|R\right]<\infty.$ Further, from \eqref{traceofinverseofA}, $\textnormal{E}_{\scriptscriptstyle \bm{\theta}}\left(\lambda_{\max}(\bm{A}^{-2}(R))\right)<\infty$.
       Hence, together with \eqref{ineqR1}, we get
       \begin{flalign*}
           \textnormal{P}_{\scriptscriptstyle \bm{\theta}}\left(\textnormal{E}_{\scriptscriptstyle \bm{\theta}}\left[\frac{1}{\textnormal{vec}^{\top}(\bm{U})\textnormal{vec}(\bm{U})}\Big|R\right]<\infty \right)=1.
       \end{flalign*}
      Therefore,
       together with \eqref{dvecUvecU}, we get
           $\textnormal{P}_{\scriptscriptstyle \bm{\theta}}\left(\textnormal{E}_{\scriptscriptstyle \bm{\theta}}\left[(\chi^{2}_{qR}(\delta_R))^{-1}\big|R\right]<\infty \right)=1$ 
           this implies that $\textnormal{P}_{\scriptscriptstyle \bm{\theta}}\left(qR>2 \right)=1$, 
       which completes the proof.
    \end{proof}
It should be noted that although Theorem~2 of \cite{ChetelatWells} does not require the condition that $\textnormal{P}_{\scriptscriptstyle \bm{\theta}}(R>2)=1$, the main step of their proof consists in proving that $\textnormal{E}_{\scriptscriptstyle \bm{\theta}}\left[1/F\right]<\infty$. However, the result of \Cref{Lemma4} shows that without requiring that $\textnormal{P}_{\scriptscriptstyle \bm{\theta}}(qR>2)=1$ (or $\textnormal{P}_{\scriptscriptstyle \bm{\theta}}(R>2)=1$ for the case studied in \cite{ChetelatWells} where $q=1$), it is impossible to have $\textnormal{E}_{\scriptscriptstyle \bm{\theta}}\left[1/F\right]<\infty$. The fact is, as mentioned in \cite{ArashSeverien}, the proof given in the quoted paper has a major mistake which resulted in the incorrect use of the Cauchy-Schwarz inequality. In particular, the misuse of this inequality on their page 3153, led to an incorrect bound for the quantity $\bm{X}'(\bm{T}^{+}\bm{T}\bm{A})^{+}(\bm{T}^{+}\bm{T}\bm{A})\bm{X}$. 
By using \Cref{Lemma4}, we derive the following theorem which represents a generalisation of the corrected version of Theorem 2 of \cite{ChetelatWells}. The established theorem is useful in deriving the domination of $\bm{\delta}_{r}\left(\bm{X},\bm{S}\right)$ over $\bm{\delta}^{0}\left(\bm{X}\right)$. 
\begin{theorem}\label{Theorem2}
Suppose that \Cref{ass:model} holds along with \eqref{FRdelta0} and $\textnormal{P}_{\scriptscriptstyle \bm{\theta}}(qR>2)=1$. 
For $\bm{A}$ the symmetric positive definite square root of $\bm{\Sigma}$, let $\Tilde{\bm{Y}}=\sqrt{q}\bm{Y}\bm{A}^{-1}$. Let $r$ be any bounded differentiable non-negative function $r:\mathbb{R}\longrightarrow [0,c_{1}]$ with bounded derivative $|r^{\prime}|\le c_{2}$. Let
       $ \bm{G}=\frac{r^2(F)}{F^2}\bm{S}^{+}\bm{X}\bm{X}^{\top}\bm{S}^{+}\bm{S}$, 
        \text{ }$g(\bm{X},\bm{S})=\frac{r(F)\bm{S}\bm{S}^{+}\bm{X}}{F}$ and let
    $\bm{H}=\bm{A}\bm{G}\bm{A}^{-1}$. Then
    \begin{flalign*}
        (i) \quad \mathrm{E}_{\scriptscriptstyle \bm{\theta}}\left[|\mathrm{div}_{\mathrm{vec}(\Tilde{\bm{Y}})}\mathrm{vec}(\Tilde{\bm{Y}}\bm{H})|\right]<\infty;&&
    \end{flalign*}
    \begin{flalign*}
        (ii) \quad \mathrm{E}_{\scriptscriptstyle \bm{\theta}}\left[|\mathrm{tr}(\nabla_Xg(\bm{X},\bm{S})^{\top})|\right] < \infty;&&
    \end{flalign*}
       \begin{flalign*}
        (iii) \quad \mathrm{E}_{\scriptscriptstyle \bm{\theta}}&\left[\mathrm{tr}\left(g^{\top}(\bm{X},\bm{S})\bm{\Sigma}^{-1}g(\bm{X},\bm{S})\right)\right]\\&=\ds{\frac{1}{q}}\times\, \mathrm{E}_{\scriptscriptstyle \bm{\theta}}\left[\frac{r^2(F)}{F}\left(nq+p-2\mathrm{tr}(\bm{S}\bm{S}^{+})-1+\frac{4\mathrm{tr}((\bm{X}^{\top}\bm{S}^{+}\bm{X})^2)}{F^2}\right)-\frac{4r(F)r^{\prime}(F)}{F^2}\right].&&
    \end{flalign*}
    \end{theorem}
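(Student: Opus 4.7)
The overall strategy is to treat the three claims in order of increasing difficulty. Part (ii) is a direct consequence of \Cref{Proposition1}(i), which gives $\mathrm{tr}(\nabla_{\bm{X}}g^{\top}) = 2r^{\prime}(F) + [q\,\mathrm{tr}(\bm{S}\bm{S}^{+})-2]r(F)/F$. Since $\mathrm{tr}(\bm{S}\bm{S}^{+}) = R \leqslant p$ together with the boundedness hypotheses $|r|\leqslant c_{1}$ and $|r^{\prime}|\leqslant c_{2}$, the absolute value is dominated by $2c_{2} + (qp+2)c_{1}/F$, which is integrable by \Cref{Lemma4} (since $\textnormal{P}_{\bm{\theta}}(qR > 2) = 1$ ensures $\mathrm{E}_{\bm{\theta}}[1/F] < \infty$).

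For part (i), the product rule applied to $\tilde{\bm{Y}}\bm{H}$ yields
\begin{eqnarray*}
\mathrm{div}_{\mathrm{vec}(\tilde{\bm{Y}})}\mathrm{vec}(\tilde{\bm{Y}}\bm{H}) = nq\,\mathrm{tr}(\bm{H}) + \sum_{a,b,c}\tilde{Y}_{ac}\frac{\partial H_{cb}}{\partial \tilde{Y}_{ab}}.
\end{eqnarray*}
Cyclic trace together with $\bm{S}^{+}\bm{S}\bm{S}^{+}=\bm{S}^{+}$ gives $\mathrm{tr}(\bm{H}) = \mathrm{tr}(\bm{G}) = \frac{r^{2}(F)}{F^{2}}\mathrm{tr}(\bm{X}^{\top}\bm{S}^{+}\bm{S}\bm{S}^{+}\bm{X}) = r^{2}(F)/F$, so the first piece is dominated by $nq\,c_{1}^{2}/F$ and is integrable by \Cref{Lemma4}. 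For the second piece, I plan to bound $|\partial \bm{S}^{+}/\partial \tilde{Y}_{ab}|$, $|\partial F/\partial \tilde{Y}_{ab}|$ and $|\partial r(F)/\partial \tilde{Y}_{ab}|$ in terms of the eigenvalues of $\bm{S}^{+}$, the norm of $\bm{X}$, the constants $c_{1}$, $c_{2}$ and powers of $1/F$, and then invoke Cauchy--Schwarz, finite Gaussian moments of $\bm{X}$ and $\tilde{\bm{Y}}$, and \Cref{Lemma4} to conclude.

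For part (iii), the key reduction is $\bm{S} = q^{-1}\bm{A}\tilde{\bm{Y}}^{\top}\tilde{\bm{Y}}\bm{A}$, which combined with $\bm{\Sigma}^{-1} = \bm{A}^{-2}$ gives
\begin{eqnarray*}
\mathrm{tr}(g^{\top}\bm{\Sigma}^{-1}g) = \mathrm{tr}(\bm{\Sigma}^{-1}\bm{S}\bm{G}) = \frac{1}{q}\mathrm{tr}(\tilde{\bm{Y}}\bm{H}\tilde{\bm{Y}}^{\top}) = \frac{1}{q}\mathrm{vec}(\tilde{\bm{Y}})^{\top}\mathrm{vec}(\tilde{\bm{Y}}\bm{H}).
\end{eqnarray*}
Since $\mathrm{vec}(\tilde{\bm{Y}}) \sim \mathcal{N}_{nqp}(\bm{0}, \bm{I}_{nqp})$ and $\bm{X}$ is independent of $\tilde{\bm{Y}}$, part (i) legitimizes the vector Gaussian integration by parts $\mathrm{E}_{\bm{\theta}}[\mathrm{tr}(g^{\top}\bm{\Sigma}^{-1}g)] = q^{-1}\mathrm{E}_{\bm{\theta}}[\mathrm{div}_{\mathrm{vec}(\tilde{\bm{Y}})}\mathrm{vec}(\tilde{\bm{Y}}\bm{H})]$. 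Using the splitting from part (i), the $nq\,\mathrm{tr}(\bm{H})$ term contributes $nq\,\mathrm{E}[r^{2}(F)/F]$; the remaining sum is computed by expanding $\bm{H} = \bm{A}\bm{G}\bm{A}^{-1}$, applying the product rule to the four factors of $\bm{G}$ (only $r^{2}(F)/F^{2}$, the two occurrences of $\bm{S}^{+}$, and $\bm{S}$ depend on $\tilde{\bm{Y}}$), substituting the explicit formulas for $\partial F/\partial \tilde{Y}_{ab}$ and $\partial \bm{S}^{+}/\partial \tilde{Y}_{ab}$ when $\bm{S} = \bm{Y}^{\top}\bm{Y}$, and simplifying via $\bm{S}^{+}\bm{S}\bm{S}^{+}=\bm{S}^{+}$ and $\bm{S}\bm{S}^{+}\bm{S}=\bm{S}$. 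After the tensor contractions the four resulting contributions collapse to $\tfrac{r^{2}(F)}{F}[p - 2\mathrm{tr}(\bm{S}\bm{S}^{+}) - 1] + \tfrac{4r^{2}(F)\mathrm{tr}((\bm{X}^{\top}\bm{S}^{+}\bm{X})^{2})}{F^{3}} - \tfrac{4r(F)r^{\prime}(F)}{F^{2}}$, which together with $nq\,r^{2}(F)/F$ and the overall factor $1/q$ yields the announced formula.

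The main obstacle is the bookkeeping in part (iii): because $p > nq$ forces $\bm{S}$ to be rank-deficient almost surely, the clean identity $\partial \bm{S}^{-1}/\partial Y_{ij} = -\bm{S}^{-1}(\partial\bm{S}/\partial Y_{ij})\bm{S}^{-1}$ is unavailable, and one must instead use the rank-respecting derivative of $\bm{S}^{+} = \bm{Y}^{\top}(\bm{Y}\bm{Y}^{\top})^{-2}\bm{Y}$, which introduces unavoidable contributions from the complementary projector $\bm{I} - \bm{S}\bm{S}^{+}$. Carefully tracking these projector terms is what generates the combination $p - 2\mathrm{tr}(\bm{S}\bm{S}^{+}) - 1$ and the quartic $F^{-3}$ term, and is precisely the point at which the argument of \cite{ChetelatWells} breaks down, necessitating the corrected statement established here.
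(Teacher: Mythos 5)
Your treatment of Part (ii) coincides with the paper's (Proposition \ref{Proposition1}(i) plus boundedness of $r,r'$ and $\mathrm{E}_{\bm{\theta}}[1/F]<\infty$ from \Cref{Lemma4}), and the skeleton of Part (iii) — reduce $\mathrm{tr}(g^{\top}\bm{\Sigma}^{-1}g)$ to $q^{-1}\mathrm{vec}(\Tilde{\bm{Y}})^{\top}\mathrm{vec}(\Tilde{\bm{Y}}\bm{H})$, apply Stein's identity in $\mathrm{vec}(\Tilde{\bm{Y}})$, then evaluate the divergence by differentiating $\bm{S}^{+}$ — is exactly the paper's route (its Propositions \ref{Proposition3}, \ref{Proposition3bis}, \ref{Proposition4} and Lemmas \ref{Lemma1}, \ref{Lemma2}), although you only assert the final "collapse" of the tensor contractions, whereas that computation (the nine-term expansion of $\partial(\bm{S}^{+}\bm{X}\bm{X}^{\top}\bm{S}\bm{S}^{+})/\partial Y_{\alpha\beta}$ and the resulting cancellations) is where most of the work of the theorem actually lies.

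The genuine gap is in Part (i). You propose to bound the second piece $\sum_{\alpha,\beta,k}\Tilde{Y}_{\alpha k}\,\partial H_{k\beta}/\partial\Tilde{Y}_{\alpha\beta}$ term by term, controlling $|\partial\bm{S}^{+}/\partial\Tilde{Y}|$, $|\partial F/\partial\Tilde{Y}|$, etc.\ by eigenvalues of $\bm{S}^{+}$ and $\|\bm{X}\|$, and then to finish with Cauchy--Schwarz, Gaussian moments and \Cref{Lemma4}. This cannot be completed under the stated hypotheses: the individual terms of the expansion contain contributions such as $\mathrm{tr}(\bm{S}^{+})\,\mathrm{tr}(\bm{S}^{+}\bm{X}\bm{X}^{\top}\bm{S})$ and quantities controlled only by $\lambda_{\max}(\bm{S}^{+})=1/\lambda^{\dag}_{\min}(\bm{S})$, whose expectations are of inverse-Wishart type and need not be finite (e.g.\ when $p-nq$ is small), and a Cauchy--Schwarz split of terms carrying a $1/F$ or $1/F^{2}$ factor would require $\mathrm{E}_{\bm{\theta}}[1/F^{2}]<\infty$, which \Cref{Lemma4} does not give (it only gives $\mathrm{E}_{\bm{\theta}}[1/F]<\infty$ under $\textnormal{P}_{\bm{\theta}}(qR>2)=1$). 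The whole point of the paper's argument is that one must first derive the exact closed form of the divergence (its \Cref{Proposition5}, built on \Cref{Lemma1}, \Cref{prop:A1_t_A9} and \Cref{Lemma2}): in that identity the dangerous $\mathrm{tr}(\bm{S}^{+})$ terms cancel exactly, and every surviving term is dominated by a constant times $1/F$ or by a constant, using $\mathrm{tr}((\bm{X}^{\top}\bm{S}^{+}\bm{X})^{2})\leqslant F^{2}$ (\Cref{lemma:traceineq}); only then does $\mathrm{E}_{\bm{\theta}}[1/F]<\infty$ suffice. Since the closed-form identity is purely algebraic, the repair is to derive it before Part (i) (as the paper does) and use it both for the integrability claim in (i) and for the expectation formula in (iii); as written, your Part (i) rests on bounds that are not available.
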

    \begin{proof}
       $(i)$ \quad By Lemma \ref{Proposition5}  and triangle inequality, we get
        \begin{flalign*}
            &\left|\mathrm{div}_{\mathrm{vec}(\Tilde{\bm{Y}})}\mathrm{vec}(\Tilde{\bm{Y}}\bm{H})\right|=\left|\frac{r^2(F)}{F}\left(nq+p-2\mathrm{tr}(\bm{S}\bm{S}^{+})-1
            +\frac{4\mathrm{tr}\left((\bm{X}^{\top}\bm{S}^+\bm{X})^2\right)}{F^2}\right) 
             -\frac{4r(F)r^{\prime}(F)}{F^2}\mathrm{tr}\left((\bm{X}^{\top}\bm{S}^+\bm{X})^2\right)\right|\\
             &\leqslant \frac{r^2(F)}{F}\left|nq+p-2\mathrm{tr}(\bm{S}\bm{S}^{+})-1+\frac{4\mathrm{tr}\left((\bm{X}^{\top}\bm{S}^+\bm{X})^2\right)}{F^2}\right|
             +\left|\frac{4r(F)r^{\prime}(F)}{F^2}\right|\mathrm{tr}\left((\bm{X}^{\top}\bm{S}^+\bm{X})^2\right), 
&&
        \end{flalign*}
        then
        \begin{flalign*}
            &|\mathrm{div}_{\mathrm{vec}(\Tilde{\bm{Y}})}\mathrm{vec}(\Tilde{\bm{Y}}\bm{H})|
             \leqslant \frac{c_{1}^2}{F}\left| nq+p-2\mathrm{tr}(\bm{S}\bm{S}^{+})-1+\frac{4\mathrm{tr}\left((\bm{X}^{\top}\bm{S}^+\bm{X})^2\right)}{F^2}\right|
             +\frac{4c_{1}c_{2}}{F^2}\mathrm{tr}\left((\bm{X}^{\top}\bm{S}^+\bm{X})^2\right)
&&
        \end{flalign*}
        and then
                \begin{flalign*}
            &|\mathrm{div}_{\mathrm{vec}(\Tilde{\bm{Y}})}\mathrm{vec}(\Tilde{\bm{Y}}\bm{H})|
             \leqslant \frac{c_{1}^2}{F}\left|nq+p-2\mathrm{tr}(\bm{S}\bm{S}^{+})-1\right|+\frac{4c_{1}^2\mathrm{tr}\left((\bm{X}^{\top}\bm{S}^+\bm{X})^2\right)}{F^3}
             +\frac{4c_{1}c_{2}}{F^2}\mathrm{tr}\left((\bm{X}^{\top}\bm{S}^+\bm{X})^2\right).&&
        \end{flalign*}
        Therefore, since $\mathrm{tr}(\bm{S}\bm{S}^{+})=\mathrm{min}(nq,p)$ almost surely, we have
        \begin{flalign}
            \mathrm{E}_{\scriptscriptstyle \bm{\theta}}\left[ |\mathrm{div}_{\mathrm{vec}(\Tilde{\bm{Y}})}\mathrm{vec}(\Tilde{\bm{Y}}\bm{H})|\right]\leqslant c_{1}^2|nq+p-2\mathrm{min}(nq,p)-1|\mathrm{E}_{\scriptscriptstyle \bm{\theta}}\left[\frac{1}{F}\right]
            +4c_{1}^2\mathrm{E}_{\scriptscriptstyle \bm{\theta}}\left[\frac{\mathrm{tr}\left((\bm{X}^{\top}\bm{S}^+\bm{X})^2\right)}{F^3}\right]\nonumber\\
            +4c_{1}c_{2}\mathrm{E}_{\scriptscriptstyle \bm{\theta}}\left[\frac{\mathrm{tr}\left((\bm{X}^{\top}\bm{S}^+\bm{X})^2\right)}{F^2}\right].&&\label{exp_F}
        \end{flalign}
        Since $F$ is semi-positive definite, from \Cref{lemma:traceineq}, we have
           $\mathrm{tr}\left((\bm{X}^{\top}\bm{S}^+\bm{X})^2\right)
           \leqslant F^2$.
        Then,
        \begin{equation}
            \mathrm{tr}\left((\bm{X}^{\top}\bm{S}^+\bm{X})^2\right)/F^2\leqslant 1. \label{F2i}
        \end{equation}
        Hence,
        \begin{eqnarray}
              \mathrm{E}_{\scriptscriptstyle \bm{\theta}}\left[\mathrm{tr}\left((\bm{X}^{\top}\bm{S}^+\bm{X})^2\right)/F^2\right]\leqslant 1 \label{F2ii}
         \quad{ } \quad{ } \mbox{ and }\quad{ } \quad{ }
            \mathrm{tr}\left((\bm{X}^{\top}\bm{S}^+\bm{X})^2\right)/F^3 
            \leqslant 1/F. 
        \end{eqnarray}
        Therefore,
        \begin{equation}
            \mathrm{E}_{\scriptscriptstyle \bm{\theta}}\left[\mathrm{tr}\left((\bm{X}^{\top}\bm{S}^+\bm{X})^2\right)/F^3\right]\leqslant \mathrm{E}_{\scriptscriptstyle \bm{\theta}}\left[1/F\right].\label{F3}
        \end{equation}
        Then, by \eqref{F2ii} and \eqref{F3} together with \eqref{exp_F} we get
        \begin{flalign}
             \mathrm{E}_{\scriptscriptstyle \bm{\theta}}\left[|\mathrm{div}_{\mathrm{vec}(\Tilde{\bm{Y}})}\mathrm{vec}(\Tilde{\bm{Y}}\bm{H})|\right]\leqslant &c_{1}^2\left|nq+p-2\mathrm{min}(nq,p)-1\right|\mathrm{E}_{\scriptscriptstyle \bm{\theta}}\left[\frac{1}{F}\right]
            &+4c_{1}^2E\left[\frac{1}{F}\right]
            &+4c_{1}c_{2}.&&\label{exp_FF}
        \end{flalign}
        Further, since $\textnormal{P}_{\scriptscriptstyle \bm{\theta}}(qR>2)=1$ then by Lemma \ref{Lemma4}, we get $\mathrm{E}_{\scriptscriptstyle \bm{\theta}}\left[1/F\right]<\infty$.\\
        $(ii)$ \quad Similarly to Part $(i)$, by Part $(i)$ \Cref{Proposition3},  we get
        \begin{flalign*}
            \mathrm{E}_{\scriptscriptstyle \bm{\theta}}\left[|\mathrm{tr}(\nabla_{\bm{X}}g(\bm{X},\bm{S})^{\top})|\right]&= \mathrm{E}_{\scriptscriptstyle \bm{\theta}}\left[\left|2r^{\prime}(F)+\frac{r(F)}{F}(q\mathrm{tr}(\bm{S}\bm{S}^{+})-2)\right|\right] 
            \leqslant 2c_{2}+c_{1}|q\mathrm{min}(nq,p)-2|\mathrm{E}_{\scriptscriptstyle \bm{\theta}}\left[\frac{1}{F}\right]<\infty,&&
        \end{flalign*}
        this proves Part~$(ii)$.\\
               $(iii)$ \quad Since $\textnormal{P}_{\scriptscriptstyle \bm{\theta}}(qR>2)=1$, by Part~$(ii)$, 
               we get $\textnormal{E}_{\scriptscriptstyle \bm{\theta}}[|\mathrm{div}_{\mathrm{vec}(\Tilde{\bm{Y}})}\mathrm{vec}(\Tilde{\bm{Y}}\bm{H})|]<\infty$. Therefore, from Part $(ii)$ of Proposition~\ref{Proposition3bis}, we have
        \begin{flalign*}
            \mathrm{E}_{\scriptscriptstyle \bm{\theta}}\left[\mathrm{tr}\left(g^{\top}(\bm{X},\bm{S})\bm{\Sigma}^{-1}g(\bm{X},\bm{S})\right)\right]
            =\mathrm{E}_{\scriptscriptstyle \bm{\theta}}\left[\mathrm{div}_{\mathrm{vec}(\Tilde{\bm{Y}})}\mathrm{vec}(\Tilde{\bm{Y}}\bm{H})\right]\Big/q.
        \end{flalign*}
        Further, from \Cref{Proposition5}, we have
        \begin{flalign*}
            \mathrm{div}_{\mathrm{vec}(\Tilde{\bm{Y}})}\mathrm{vec}(\Tilde{\bm{Y}}\bm{H})=\frac{r^2(F)}{F}\left( nq+p-2\mathrm{tr}(\bm{S}\bm{S}^{+})-1+\frac{4\mathrm{tr}((\bm{X}^{\top}\bm{S}^{+}\bm{X})^2)}{F^2}\right)
         -\frac{4r(F)r^{\prime}(F)}{F^2}.
        \end{flalign*}
        Hence,
        \begin{flalign*}
            \mathrm{E}_{\scriptscriptstyle \bm{\theta}}&\left[\mathrm{tr}\left(g^{\top}(\bm{X},\bm{S})\bm{\Sigma}^{-1}g(\bm{X},\bm{S})\right)\right]\\
            &=\ds{\frac{1}{q}}\,\mathrm{E}_{\scriptscriptstyle \bm{\theta}}\left[\frac{r^2(F)}{F}\left(nq+p-2\mathrm{tr}(\bm{S}\bm{S}^{+})-1+\frac{4\mathrm{tr}((\bm{X}^{\top}\bm{S}^{+}\bm{X})^2)}{F^2}\right)-\frac{4r(F)r^{\prime}(F)}{F^2}\right],&&
        \end{flalign*}
       which completes the proof.
    \end{proof}
It should be noted that Part~{\bf(i)} of \Cref{Theorem2} corresponds to Theorem~2 of \cite{ChetelatWells} in the special case where $q=1$. Nevertheless, \cite{ChetelatWells} omitted to impose the condition, about the rank of $\bm{S}$, as required in \Cref{Theorem2}. In this section, we give a counter-example which shows that if $\textnormal{P}_{\scriptscriptstyle \bm{\theta}}\left(qR\leqslant 2\right)>0$, the above result as well as Theorem~2 of \cite{ChetelatWells} do not hold i.e. $\mathrm{E}_{\scriptscriptstyle \bm{\theta}}\left[|\mathrm{div}_{\mathrm{vec}(\Tilde{\bm{Y}})}\mathrm{vec}(\Tilde{\bm{Y}}\bm{H})|\right]=\infty$.

\begin{corollary}\label{Corollary1}
    Suppose that the conditions of \Cref{Theorem2} hold and suppose that $q\geq 3$. Then, for all $n$ and $p$,
        $\mathrm{E}_{\scriptscriptstyle \bm{\theta}}\left[\left|\mathrm{div}_{\mathrm{vec}(\Tilde{\bm{Y}})}\mathrm{vec}(\Tilde{\bm{Y}}\bm{H})\right|\right]<\infty$.
    \begin{proof}
        Since $q\geq 3$ and $R\geq 1$ a.s., we have $qR>2$ a.s. Therefore, $\textnormal{P}_{\scriptscriptstyle \bm{\theta}}(qR>2)=1$. Then, by \Cref{Theorem2}, we get
        $\mathrm{E}_{\scriptscriptstyle \bm{\theta}}\left[\left|\mathrm{div}_{\mathrm{vec}(\Tilde{\bm{Y}})}\mathrm{vec}(\Tilde{\bm{Y}}\bm{H})\right|\right]<\infty$,
       which completes the proof.
    \end{proof}
\end{corollary}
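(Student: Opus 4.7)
The plan is to show that the hypothesis $q\geq 3$ automatically forces the condition $\textnormal{P}_{\scriptscriptstyle \bm{\theta}}(qR>2)=1$ needed to invoke \Cref{Theorem2}, and then the conclusion follows immediately from Part~$(i)$ of that theorem.

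First I would argue that $R=\textnormal{rank}(\bm{S})\geq 1$ almost surely. This is because $\bm{S}=\bm{Y}^{\top}\bm{Y}$ with $\bm{Y}\sim \mathcal{N}_{nq\times p}(\bm{0},\bm{I}_{nq}\otimes(\bm{\Sigma}/q))$ and $\bm{\Sigma}$ positive definite; consequently $\textnormal{vec}(\bm{Y})$ has a non-degenerate Gaussian distribution, so the event $\{\bm{Y}=\bm{0}\}$ (equivalently $\{\bm{S}=\bm{0}\}$, equivalently $\{R=0\}$) has probability zero. Hence $R\geq 1$ almost surely.

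Combining $R\geq 1$ a.s.\ with the standing hypothesis $q\geq 3$ yields $qR\geq 3>2$ almost surely, so $\textnormal{P}_{\scriptscriptstyle \bm{\theta}}(qR>2)=1$. With all the hypotheses of \Cref{Theorem2} now verified, its Part~$(i)$ delivers directly
\begin{flalign*}
\mathrm{E}_{\scriptscriptstyle \bm{\theta}}\left[\left|\mathrm{div}_{\mathrm{vec}(\Tilde{\bm{Y}})}\mathrm{vec}(\Tilde{\bm{Y}}\bm{H})\right|\right]<\infty,&&
\end{flalign*}
which is what we wanted.

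There is no real obstacle here: the statement is packaged as a corollary precisely because the condition $q\geq 3$ is a sufficient condition (uniform in $n$ and $p$) under which the rank-based hypothesis of \Cref{Theorem2} cannot fail. The only subtlety worth stating explicitly in the write-up is the almost-sure positivity of $R$, which is immediate from the non-degeneracy of $\bm{Y}$; all heavy lifting (the bounds on $\mathrm{E}_{\scriptscriptstyle \bm{\theta}}[1/F]$, $\mathrm{tr}((\bm{X}^{\top}\bm{S}^{+}\bm{X})^{2})/F^{2}$, etc.) has already been carried out in \Cref{Lemma4} and in the proof of \Cref{Theorem2}.
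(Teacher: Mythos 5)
Your proposal is correct and follows essentially the same route as the paper's proof: deduce $qR>2$ almost surely from $q\geq 3$ and $R\geq 1$ a.s., then apply Part~$(i)$ of \Cref{Theorem2}. The only difference is that you spell out why $R\geq 1$ a.s.\ (non-degeneracy of $\bm{Y}$), a detail the paper takes for granted.
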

\begin{corollary}\label{Corollary2}
    Suppose that the conditions of \Cref{Theorem2} hold with
    $r(t)\in [c^{*},c_{1}]\subset (0,c_{1}]$ for some $c^{*}>0$, for all $t\geq0$ and suppose that {$|p-nq|>1$}. 
Then,\\
        $\mathrm{E}_{\scriptscriptstyle \bm{\theta}}\left[|\mathrm{div}_{\mathrm{vec}(\Tilde{\bm{Y}})}\mathrm{vec}(\Tilde{\bm{Y}}\bm{H})|\right]<\infty$
    if and only if $\textnormal{P}_{\scriptscriptstyle \bm{\theta}}(qR>2)=1$.
    \begin{proof}
    If $\textnormal{P}_{\scriptscriptstyle \bm{\theta}}(qR>2)=1$ then, by \Cref{Theorem2}, we get
        $\mathrm{E}_{\scriptscriptstyle \bm{\theta}}\left[|\mathrm{div}_{\mathrm{vec}(\Tilde{\bm{Y}})}\mathrm{vec}(\Tilde{\bm{Y}}\bm{H})|\right]<\infty$.
        Conversely, 
suppose that $\mathrm{E}_{\scriptscriptstyle \bm{\theta}}\left[|\mathrm{div}_{\mathrm{vec}(\Tilde{\bm{Y}})}\mathrm{vec}(\Tilde{\bm{Y}}\bm{H})|\right]<\infty$. We have
    \begin{flalign*}
       \mathrm{div}_{\mathrm{vec}(\Tilde{\bm{Y}})}\mathrm{vec}(\Tilde{\bm{Y}}\bm{H})=&\left( nq+p-2\mathrm{min}(p,nq)-1+4\mathrm{tr}\left((\bm{X}^{\top}\bm{S}^{+}\bm{X})^2\right)/F^2\right)r^2(F)/F\\
       &-4r(F)r^{\prime}(F)\mathrm{tr}\left((\bm{X}^{\top}\bm{S}^{+}\bm{X})^2\right)/F^2.&&
    \end{flalign*}
     Then, since $ 0 \leqslant \mathrm{tr}((\bm{X}^{\top}\bm{S}^{+}\bm{X})^2)/F^2\leqslant 1$ and $nq+p-2\mathrm{min}(p,nq)=|p-nq|$, we get\\
          $\mathrm{div}_{\mathrm{vec}(\Tilde{\bm{Y}})}\mathrm{vec}(\Tilde{\bm{Y}}\bm{H}) \geqslant 
          r^2(F)\left( |p-nq|-1\right)/F-4c_{1}c_{2}$.
     Therefore, since $|p-nq|>1$, we get
      \begin{flalign*}
          0<\frac{r^2(F)}{F} \leqslant& \frac{1}{|p-nq|-1}\left(\mathrm{div}_{\mathrm{vec}(\Tilde{\bm{Y}})}\mathrm{vec}(\Tilde{\bm{Y}}\bm{H}) + 4c_{1}c_{2}\right) 
           = \frac{1}{|p-nq|-1}\left(\left|\mathrm{div}_{\mathrm{vec}(\Tilde{\bm{Y}})}\mathrm{vec}(\Tilde{\bm{Y}}\bm{H}) + 4c_{1}c_{2}\right|\right)\\
          & \leqslant \frac{1}{|p-nq|-1}\left(\left|\mathrm{div}_{\mathrm{vec}(\Tilde{\bm{Y}})}\mathrm{vec}(\Tilde{\bm{Y}}\bm{H})\right| + 4c_{1}c_{2}\right).&&
      \end{flalign*}
      Therefore
      \begin{eqnarray*}
          \mathrm{E}_{\scriptscriptstyle \bm{\theta}}\left[r^2(F)/F\right]\leqslant \left(\mathrm{E}_{\scriptscriptstyle \bm{\theta}}\left[\left|\mathrm{div}_{\mathrm{vec}(\Tilde{\bm{Y}})}\mathrm{vec}(\Tilde{\bm{Y}}\bm{H})\right|\right] + 4c_{1}c_{2}\right)\big/(|p-nq|-1) < \infty.&&
      \end{eqnarray*}

      Further, we have $(c^{*})^2/F \leqslant r^2(F)/F$, this implies that
      \begin{eqnarray*}
           (c^{*})^2\mathrm{E}_{\scriptscriptstyle \bm{\theta}}\left[1/F\right] \leqslant \mathrm{E}_{\scriptscriptstyle \bm{\theta}}\left[r^2(F)/F\right] < \infty, &&
      \end{eqnarray*}
     which implies that
        $\mathrm{E}_{\scriptscriptstyle \bm{\theta}}\left[1/F\right] < \infty$.
    Therefore, by Lemma \ref{Lemma4}, $\textnormal{P}_{\scriptscriptstyle \bm{\theta}}(qR>2)=1$, as stated. 
    \end{proof}
\end{corollary}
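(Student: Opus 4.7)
The forward direction is immediate: if $\textnormal{P}_{\bm{\theta}}(qR>2)=1$, then the hypotheses of \Cref{Theorem2} are satisfied, and Part~$(i)$ of that theorem directly gives the desired integrability. So the plan is to focus essentially all the effort on the converse.

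For the converse, I would start from the explicit identity for $\mathrm{div}_{\mathrm{vec}(\Tilde{\bm{Y}})}\mathrm{vec}(\Tilde{\bm{Y}}\bm{H})$ that appears in the proof of \Cref{Theorem2}~$(iii)$, namely the expression involving $r^2(F)/F$, the trace $\mathrm{tr}(\bm{S}\bm{S}^{+})$, the ratio $\mathrm{tr}((\bm{X}^\top \bm{S}^+ \bm{X})^2)/F^2$, and the correction term $-4r(F)r'(F)\mathrm{tr}((\bm{X}^\top \bm{S}^+ \bm{X})^2)/F^2$. Using $\mathrm{tr}(\bm{S}\bm{S}^{+})=\min(nq,p)$ almost surely, the coefficient $nq+p-2\mathrm{tr}(\bm{S}\bm{S}^{+})-1$ collapses to $|p-nq|-1$, which is strictly positive by hypothesis. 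The nonnegativity of $4\mathrm{tr}((\bm{X}^\top \bm{S}^+ \bm{X})^2)/F^2$ lets me drop that term and obtain a lower bound of the form $(|p-nq|-1)\,r^2(F)/F$ for the main piece. For the correction term, I would use \eqref{F2i} together with the uniform bounds $r\leqslant c_1$ and $|r'|\leqslant c_2$ to control it by $4c_1 c_2$, independent of the data.

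Combining these two observations, I get the pointwise inequality
\[
(|p-nq|-1)\,\frac{r^2(F)}{F} \;\leqslant\; \bigl|\mathrm{div}_{\mathrm{vec}(\Tilde{\bm{Y}})}\mathrm{vec}(\Tilde{\bm{Y}}\bm{H})\bigr| + 4 c_1 c_2.
\]
Taking expectations and using the assumption that the divergence is integrable together with $|p-nq|>1$ yields $\mathrm{E}_{\bm{\theta}}[r^2(F)/F]<\infty$. The lower bound $r(t)\geqslant c^{*}>0$ then transfers this finiteness to $\mathrm{E}_{\bm{\theta}}[1/F]$, since $(c^{*})^2/F \leqslant r^2(F)/F$. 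At that point, an appeal to \Cref{Lemma4} closes the argument by converting $\mathrm{E}_{\bm{\theta}}[1/F]<\infty$ into $\textnormal{P}_{\bm{\theta}}(qR>2)=1$.

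The main obstacle is the algebraic step of isolating a clean positive multiple of $r^2(F)/F$ from the divergence: one has to recognize that the ratio $\mathrm{tr}((\bm{X}^\top \bm{S}^+\bm{X})^2)/F^2$ both contributes a nonnegative summand inside the first factor (which can be discarded) and, paired with $r(F)r'(F)$, produces only a bounded correction. The other delicate point is the role of the two sign-preserving hypotheses: $r$ bounded away from $0$ is what allows the transfer from $\mathrm{E}[r^2(F)/F]$ to $\mathrm{E}[1/F]$, while $|p-nq|>1$ is what guarantees the leading coefficient in the lower bound is strictly positive; without either, the reverse implication would collapse. Once those pieces are in place, invoking \Cref{Lemma4} is routine.
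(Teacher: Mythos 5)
Your proposal is correct and matches the paper's own argument essentially step for step: the forward direction via Part~(i) of \Cref{Theorem2}, and the converse via the divergence identity with $\mathrm{tr}(\bm{S}\bm{S}^{+})=\min(nq,p)$, discarding the nonnegative $4\mathrm{tr}((\bm{X}^{\top}\bm{S}^{+}\bm{X})^2)/F^2$ term, bounding the correction by $4c_{1}c_{2}$, deducing $\mathrm{E}_{\scriptscriptstyle\bm{\theta}}[r^{2}(F)/F]<\infty$ from $|p-nq|>1$, transferring to $\mathrm{E}_{\scriptscriptstyle\bm{\theta}}[1/F]<\infty$ via $r\geqslant c^{*}$, and concluding with \Cref{Lemma4}. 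No gaps; nothing further is needed.
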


In the following example, we consider a positive function $r$ such that $\mathrm{E}_{\scriptscriptstyle \bm{\theta}}[|\mathrm{div}_{\mathrm{vec}(\Tilde{\bm{Y}})}\mathrm{vec}(\Tilde{\bm{Y}}\bm{H})|]=\infty$. This emphasizes the significance of the assumption regarding $qR>2$ with probability one, where $R=\textnormal{rank}(\bm{S})$. In particular, we demonstrate that when $\textnormal{P}_{\scriptscriptstyle \bm{\theta}}(qR \leqslant 2)>0$, it is possible to have $\textnormal{E}_{\scriptscriptstyle \bm{\theta}}\left[1/F\right]=\infty$. This finding renders obsolete one of the crucial steps in the proof of Theorem~2 of \cite{ChetelatWells}.
\begin{Example}
    Let $\bm{X}\sim \mathcal{N}_{2}\left(\begin{bmatrix}
    1 \\
    1
    \end{bmatrix},\bm{I}_2\right)$ and $\bm{Y} = \begin{bmatrix}
    U \vdots V
    \end{bmatrix}$
    where $U$ and $V$ are independent random variable distributed as $\mathcal{N}\left(0,\,1\right)$ i.e.  $\bm{Y} \sim \mathcal{N}_{1\times2}(0,1\otimes \bm{I}_2)$. let $r(x) = \frac{1}{1+e^{-x}}$. Let $\bm{S}^{+}=\bm{P}\bm{D}^{+}\bm{P}^{\top}$ be the spectral decomposition of $\bm{S}^{+}$ where $\bm{D}^{+}=diag(d_1,0)$. Since
    \begin{flalign*}
        F=\bm{X}^{\top}\bm{S}^{+}\bm{X}=\bm{X}^{\top}\bm{P}\bm{D}^{+}\bm{P}^{\top}\bm{X}=(\bm{P}^{\top}\bm{X})^{\top}\bm{D}^{+}\bm{P}^{\top}\bm{X}.&
    \end{flalign*}
    Therefore
    \begin{flalign*}
        \frac{F}{d_1}=(\bm{P}^{\top}\bm{X})^{\top}\begin{bmatrix}
            1 & 0 \\
            0 & 0
        \end{bmatrix}\bm{P}^{\top}\bm{X}.
    \end{flalign*}
   Note that $d_1$ and $\bm{P}$ are  functions of $(U,V)$ and note that $\bm{P}^{\top}\bm{X}\big| U,V\sim \mathcal{N}_{2}\left(\bm{P}^{\top}\begin{bmatrix}
    1 \\
    1
    \end{bmatrix},\bm{I}_{2}\right)$. Then, 
    \begin{flalign*}
      \frac{\bm{X}^{\top}\bm{S}^{+}\bm{X}}{d_1} \big|U,V \sim \chi^2_1(\delta_{0})
    \end{flalign*}
            where $\delta_{0} = \begin{bmatrix}
        1 & 1
    \end{bmatrix}\bm{P}\begin{bmatrix}
        1 & 0 \\
        0 & 0
    \end{bmatrix}\bm{P}^{\top}\begin{bmatrix}
        1\\
        1
    \end{bmatrix}.$
    Therefore, 
     \begin{flalign*}
        \textnormal{E}_{\scriptscriptstyle \bm{\theta}}\left[\frac{d_1}{F}\big|U,V\right]= \textnormal{E}_{\scriptscriptstyle \bm{\theta}}\left[\frac{d_1}{\bm{X}^{\top}\bm{S}^{+}\bm{X}}\big|U,V\right]
        =\textnormal{E}_{\scriptscriptstyle \bm{\theta}}\left[(\chi^2_1(\delta_{0}))^{-1}\big|U,V\right]=\infty,&&
     \end{flalign*}
     almost surely.
     Then,
     \begin{eqnarray*}
        \textnormal{E}_{\scriptscriptstyle \bm{\theta}}\left[\frac{1}{F}\big|U,V\right]=\frac{1}{d_1}\textnormal{E}_{\scriptscriptstyle \bm{\theta}}\left[\frac{d_1}{F}\big|U,V\right]=\infty,
     \end{eqnarray*}
 \mbox{ with probability one.}     Hence,
 \begin{flalign}
     \textnormal{E}_{\scriptscriptstyle \bm{\theta}}\left[1/F\right] = \infty.\label{exp1surF}
 \end{flalign}

  Further, we have

       $$\mathrm{div}_{\mathrm{vec}(\Tilde{\bm{Y}})}\mathrm{vec}(\Tilde{\bm{Y}}\bm{H}) = \frac{r^2(F)}{F}(n+p-2\mathrm{min}(n,p)+3)-4r(F)r^{\prime}(F),$$
    where $\mathrm{min}(n,p)=n=1$.
    Therefore, we get
    \begin{eqnarray*}
        \mathrm{div}_{\mathrm{vec}(\Tilde{\bm{Y}})}\mathrm{vec}(\Tilde{\bm{Y}}\bm{H}) 
        = \frac{r^2(F)}{F}(1+2-2+3)-4r(F)r^{\prime}(F) 
        = \frac{4r^2(F)}{F} - 4r(F)r^{\prime}(F).
    \end{eqnarray*}
    Then,
       \begin{eqnarray*}
       0<4r^2(F)/F
       =  \mathrm{div}_{\mathrm{vec}(\Tilde{\bm{Y}})}\mathrm{vec}(\Tilde{\bm{Y}}\bm{H}) + 4r(F)r^{\prime}(F) 
        = \left|\mathrm{div}_{\mathrm{vec}(\Tilde{\bm{Y}})}\mathrm{vec}(\Tilde{\bm{Y}}\bm{H}) + 4r(F)r^{\prime}(F)\right|
        \end{eqnarray*}
    and then,
    \begin{eqnarray*}
       0<4r^2(F)/F 
       \leqslant  \left|\mathrm{div}_{\mathrm{vec}(\Tilde{\bm{Y}})}\mathrm{vec}(\Tilde{\bm{Y}}\bm{H})\right| + 4r(F)r^{\prime}(F).
    \end{eqnarray*}

    Hence, since $r(F)$ and $r^{\prime}(F)$ are bounded by 1, we get
    \begin{flalign}
       0< 4r^2(F)/F \leqslant  \left|\mathrm{div}_{\mathrm{vec}(\Tilde{\bm{Y}})}\mathrm{vec}(\Tilde{\bm{Y}}\bm{H})\right| + 4. \label{ineq1/F}
    \end{flalign}
    We also have
        $\frac{4r^2(F)}{F}=\frac{4}{F(1+e^{-F})^2}$
    and since $1<1+e^{-F} \leqslant 2$,  then $1<(1+e^{-F})^2 \leqslant 4$ and then, 
    \begin{flalign*}
        \frac{1}{F} \leqslant \frac{4}{F(1+e^{-F})^2}=\frac{4r^2(F)}{F}.
    \end{flalign*}
    Then, together with \eqref{ineq1/F}, we get
    \begin{flalign*}
        \mathrm{E}_{\scriptscriptstyle \bm{\theta}}\left[1/F\right] \leqslant \mathrm{E}_{\scriptscriptstyle \bm{\theta}}\left[4r^2(F)/F\right]\leqslant  \textnormal{E}_{\scriptscriptstyle \bm{\theta}}\left[\left|\mathrm{div}_{\mathrm{vec}(\Tilde{\bm{Y}})}\mathrm{vec}(\Tilde{\bm{Y}}\bm{H})\right|\right] + 4.
    \end{flalign*}
    But, from \eqref{exp1surF}, $\mathrm{E}_{\scriptscriptstyle \bm{\theta}}\left[1/F\right] = \infty$, then, together with these last inequalities, we get
    \begin{flalign*}
        \mathrm{E}_{\scriptscriptstyle \bm{\theta}}\left[4r^2(F)/F\right] 
        =\mathrm{E}_{\scriptscriptstyle \bm{\theta}}\left[\left|\mathrm{div}_{\mathrm{vec}(\Tilde{\bm{Y}})}\mathrm{vec}(\Tilde{\bm{Y}}\bm{H})\right|\right]=\infty,
    \end{flalign*}
    this shows the necessity of the condition $\textnormal{P}_{\scriptscriptstyle \bm{\theta}}\left(qR>p\right)=1$.
\end{Example}
By using \Cref{Theorem2} and \Cref{Proposition1}, we derive the following result which establishes the risk dominance of $\bm{\delta}_{r}(\bm{X},\bm{S})$ over $\bm{\delta}^{0}(\bm{X})$. In the special case where $q=1$, the established result yields Theorem~1 of \cite{ChetelatWells}.
\begin{theorem}\label{Main_Theorem}
     Suppose that \Cref{ass:model} along  with $\textnormal{P}_{\scriptscriptstyle \bm{\theta}}(qR>2)=1$ and suppose that
   \begin{flalign*}
       &(i)\text{ r satisfies }  0\leqslant r \leqslant \frac{2(q.\min( nq,p)-2)}{ nq+p-2\min( nq,p)+3}\\
       &(ii)\text{ } r \text{ is non-decreasing}\\
       &(iii)\text{ }r^{\prime} \text{ is bounded}.&&
   \end{flalign*}
   Then, under 
   quadratic loss
       $L(\bm{\theta},\bm{\delta})=\mathrm{tr}\left((\bm{\delta}-\bm{\theta})^{\top}\bm{\Sigma}^{-1}(\bm{\delta}-\bm{\theta})\right)$,
   $\bm{\delta}_{r}(\bm{X},\bm{S})$ dominates $\bm{\delta}^{0}(\bm{X})$.
   \begin{proof}
   Let $g(\bm{X},\bm{S})=r(F)\bm{S}\bm{S}^{+}\bm{X}/F$. Thus $\bm{\delta}_{r}(\bm{X},\bm{S})=\bm{X}-g(\bm{X},\bm{S})$. The risk difference under the quadratic loss between $\bm{\delta}_{r}(\bm{X},\bm{S})$ and $\bm{\delta}^{0}(\bm{X})$ is
       \begin{flalign*}
           \Delta_{\bm{\theta}}=&\mathrm{E}_{\scriptscriptstyle \bm{\theta}}\left[\mathrm{tr}\left(\left(\bm{X}-g(\bm{X},\bm{S})-\bm{\theta})^{\top}\right)\bm{\Sigma}^{-1}\left(\bm{X}-g(\bm{X},\bm{S})-\bm{\theta} \right)\right)\right] 
           -\mathrm{E}_{\scriptscriptstyle \bm{\theta}}\left[\mathrm{tr}\left(\left(\bm{X}-\bm{\theta}\right)^{\top}\bm{\Sigma}^{-1}\left(\bm{X}-\bm{\theta}\right)\right)\right]. 
       \end{flalign*}
       This gives
      \begin{eqnarray}
           \Delta_{\bm{\theta}} 
           =-2\mathrm{E}_{\scriptscriptstyle \bm{\theta}}\left[\mathrm{tr}\left(g^{\top}(\bm{X},\bm{S})\bm{\Sigma}^{-1}(\bm{X}-\bm{\theta})\right)\right]+\mathrm{E}_{\scriptscriptstyle \bm{\theta}}\left[\mathrm{tr}\left(g^{\top}(\bm{X},\bm{S})\bm{\Sigma}^{-1}g(\bm{X},\bm{S})\right)\right].&&\label{risk1}
       \end{eqnarray}
       From \Cref{Proposition1} and Part~(iii) of \Cref{Theorem2}, we have
       \begin{flalign*}
           \Delta_{\bm{\theta}}=\mathrm{E}_{\scriptscriptstyle \bm{\theta}}&\Big[r^2(F)\left(n+p-2\mathrm{tr}(\bm{S}\bm{S}^{+})-1+4\mathrm{tr}((\bm{X}^{\top}\bm{S}^{+}\bm{X})^2)/F^2\right)
           /(qF)-2r(F)(q\mathrm{tr}(\bm{S}\bm{S}^{+})-2)/F\\
           &-4r^{\prime}(F)\left(1+r(F)/(qF^2)\right)\Big].&&
       \end{flalign*}
       Since $r$ is non-negative and non-decreasing, $-4r^{\prime}(F)(1+\frac{r(F)}{F^2}) \leqslant 0$ and since $\frac{\mathrm{tr}((\bm{X}^{\top}\bm{S}^{+}\bm{X})^2)}{F^2}\leqslant 1$, we have
       \begin{flalign}
           \frac{r^2(F)}{qF}\left(nq+p-2\mathrm{tr}(\bm{S}\bm{S}^{+})-1
           +\frac{4\mathrm{tr}\left(\left(\bm{X}^{\top}\bm{S}^{+}\bm{X}\right)^2\right)}{F^2}\right)
           \leqslant \frac{r^2(F)}{qF}\left(nq+p-2\mathrm{tr}(\bm{S}\bm{S}^{+})+3\right).&&\label{risk4}
       \end{flalign}
       Under the condition (i) on $r$ and since $\mathrm{tr}(\bm{S}\bm{S}^{+})=\min(nq,p)$ almost surely,  we have \\ $r(F)\leqslant\left(2(q\mathrm{tr}(\bm{S}\bm{S}^{+})-2)\right)\big/ \left(nq+p-2\mathrm{tr}(\bm{S}\bm{S}^{+})+3\right)$ and equivalently
       \begin{flalign*}
          r^2(F)\left(nq+p-2\mathrm{tr}(\bm{S}\bm{S}^{+})+3\right)/(qF) \leqslant 2r(F)\left(q\mathrm{tr}(\bm{S}\bm{S}^{+})-2\right)/(qF).
       \end{flalign*}
       Therefore, by \eqref{risk4}, we get
       \begin{flalign*}
           \mathrm{E}_{\scriptscriptstyle \bm{\theta}}\left[\left(nq+p-2\mathrm{tr}(\bm{S}\bm{S}^{+})-1+4\mathrm{tr}((\bm{X}^{T}\bm{S}^{+}\bm{X})^2)/F^2\right)r^2(F)/(qF)
           -2r(F)\left(q\mathrm{tr}(\bm{S}\bm{S}^{+})-2\right)/(qF)\right]\leqslant 0,
       \end{flalign*}
       therefore
           $\Delta_{\bm{\theta}}\leqslant 0$,
       which completes the proof.
   \end{proof}
\end{theorem}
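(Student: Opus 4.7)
The plan is to compute the risk difference $\Delta_{\bm{\theta}} = R(\bm{\theta},\bm{\delta}_r) - R(\bm{\theta},\bm{\delta}^0)$ directly and show that, under the stated conditions on $r$, it is bounded above by zero. Writing $g(\bm{X},\bm{S}) = r(F)\bm{S}\bm{S}^{+}\bm{X}/F$ so that $\bm{\delta}_r(\bm{X},\bm{S}) = \bm{X} - g(\bm{X},\bm{S})$, expansion of the quadratic form inside the loss gives the standard Stein-type decomposition
\begin{equation*}
\Delta_{\bm{\theta}} = -2\,\mathrm{E}_{\scriptscriptstyle\bm{\theta}}\bigl[\mathrm{tr}(g^{\top}(\bm{X},\bm{S})\bm{\Sigma}^{-1}(\bm{X}-\bm{\theta}))\bigr] + \mathrm{E}_{\scriptscriptstyle\bm{\theta}}\bigl[\mathrm{tr}(g^{\top}(\bm{X},\bm{S})\bm{\Sigma}^{-1}g(\bm{X},\bm{S}))\bigr].
\end{equation*}
The point is that both expectations on the right-hand side have already been computed in the paper: the cross term is handled by Part~$(ii)$ of Proposition~\ref{Proposition1}, and the quadratic term is exactly what Part~$(iii)$ of Theorem~\ref{Theorem2} provides. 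Substituting both identities reduces $\Delta_{\bm{\theta}}$ to a single expectation of a fairly concrete expression in $r(F)$, $r'(F)$, $F$, and $\mathrm{tr}(\bm{S}\bm{S}^{+})$.

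Next, I would exploit the sign assumptions: $r \geqslant 0$ and $r$ non-decreasing force $r'(F) \geqslant 0$, which lets me discard the derivative contribution (after factoring $1 + r(F)/(qF^{2}) \geqslant 0$) as a manifestly non-positive term. What remains inside the expectation is a combination of $r^{2}(F)/(qF)$ multiplied by $nq + p - 2\mathrm{tr}(\bm{S}\bm{S}^{+}) - 1 + 4\,\mathrm{tr}((\bm{X}^{\top}\bm{S}^{+}\bm{X})^{2})/F^{2}$, minus $2r(F)(q\,\mathrm{tr}(\bm{S}\bm{S}^{+}) - 2)/(qF)$. Here I would invoke the crude bound $\mathrm{tr}((\bm{X}^{\top}\bm{S}^{+}\bm{X})^{2})/F^{2} \leqslant 1$ (from Lemma~\ref{lemma:traceineq}, as used in the proof of Theorem~\ref{Theorem2}) to replace the $4/F^{2}$ term by $+4$, giving the cleaner upper bound $r^{2}(F)(nq + p - 2\mathrm{tr}(\bm{S}\bm{S}^{+}) + 3)/(qF)$.

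Finally, using $\mathrm{tr}(\bm{S}\bm{S}^{+}) = \min(nq,p)$ almost surely, the hypothesis $(i)$ on $r$ reads exactly
\begin{equation*}
r(F) \leqslant \frac{2(q\,\mathrm{tr}(\bm{S}\bm{S}^{+}) - 2)}{nq + p - 2\mathrm{tr}(\bm{S}\bm{S}^{+}) + 3},
\end{equation*}
which is designed so that multiplying both sides by $r(F)/(qF)$ shows the positive $r^{2}(F)$-term is dominated pointwise by the negative $-2r(F)(q\,\mathrm{tr}(\bm{S}\bm{S}^{+}) - 2)/(qF)$ term. Taking expectations yields $\Delta_{\bm{\theta}} \leqslant 0$, and the domination follows.

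The main obstacle, anticipated in advance, is the justification of the Stein-type identities themselves in the singular-Wishart regime $p > nq$: one must know that the expectations on both sides are finite so that differentiating under the integral is legal. This is precisely why $\textnormal{P}_{\scriptscriptstyle\bm{\theta}}(qR > 2) = 1$ is imposed and why Lemma~\ref{Lemma4} and Theorem~\ref{Theorem2} were developed first; the assumption that $r'$ is bounded, together with boundedness of $r$ built into Theorem~\ref{Theorem2}, ensures the integrability estimates \eqref{exp_FF} needed to invoke Part~$(iii)$ of Theorem~\ref{Theorem2}. Once those finiteness checks are cited, the remainder is a bookkeeping exercise in collecting terms and applying the pointwise inequality forced by condition~$(i)$.
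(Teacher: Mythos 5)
Your proposal is correct and follows essentially the same route as the paper's own proof: the same Stein-type decomposition of $\Delta_{\bm{\theta}}$ into cross and quadratic terms, the same substitution of Part~(ii) of Proposition~\ref{Proposition1} and Part~(iii) of Theorem~\ref{Theorem2}, the discarding of the $r^{\prime}$-term via monotonicity, the bound $\mathrm{tr}((\bm{X}^{\top}\bm{S}^{+}\bm{X})^{2})/F^{2}\leqslant 1$, and the pointwise inequality from condition~(i) combined with $\mathrm{tr}(\bm{S}\bm{S}^{+})=\min(nq,p)$ a.s. Your remarks on integrability, $\textnormal{P}_{\scriptscriptstyle\bm{\theta}}(qR>2)=1$, and the role of Lemma~\ref{Lemma4} match the paper's justification, so nothing further is needed.
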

\section{Numerical study}\label{sec:simulation}
In this section, we present some numerical results which corroborate the established theoretical results. In this simulation study, we consider $F = \mathrm{tr}(\bm{X}^{\top}\bm{S}^{+}\bm{X})$ and  $r(F) = \left(1+e^{-F}\right)^{-1},$
and the proposed estimator is $\bm{\delta}_{r}\left(\bm{X},\bm{S}\right)=(\bm{I}_{p}-\bm{S}\bm{S}^{+}r(F)/F)\bm{X}.$
Further, we generate samples for various values of $p$ equals to 
$(16,24,56 \text{ and }104)$ along with 11 different matrix $\bm{\theta}$ configurations for $q=3$ fixed and 
we set  $\bm{\Sigma} = \bm{I}_{p}$ for the first case, and $\bm{\Sigma}=\bm{e}_{p}\bm{e}'_{p}+3\bm{I}_{p}$ for the second case. We obtained similar results for both cases and thus, to save the space of this paper, we only report the results of the first case. For each value of $p$, we explore four distinct sample sizes: 
$n=p/8, p/4, p-1, \text{ and }2p$. This comprehensive approach allows us to investigate the impact of different $p$, $n$ and $||\bm{\theta}||$ combinations on the results of the simulation. 
\Cref{fig:enter-label} shows the simulation results. One can see that the simulation study supports the theoretical findings. As established theoretically, the risk difference
between the proposed estimator $\bm{\delta}_{r}\left(\bm{X},\bm{S}\right)$ and the classical estimator $\bm{\delta}^{0}\left(\bm{X}\right) = \bm{X}$ is not positive. This leads to the dominance of $\bm{\delta}_{r}\left(\bm{X},\bm{S}\right)$ over $\bm{\delta}^{0}\left(\bm{X}\right)$. 

 Furthermore, as presented in \Cref{fig:enter-label}, 
 a consistent pattern becomes evident across all four cases: the risk difference between the two estimators diminishes as the norm of the mean matrix, $||\bm{\theta}||$, increases. This intriguing observation serves as a compelling incentive for potential future research. Further exploring how the mean matrix $\bm{\theta}$ is related to the difference in risk between these estimators under the invariant quadratic loss opens up an interesting path for further investigation.


\begin{figure}[htbp]
    \centering
    \includegraphics[scale=0.9]{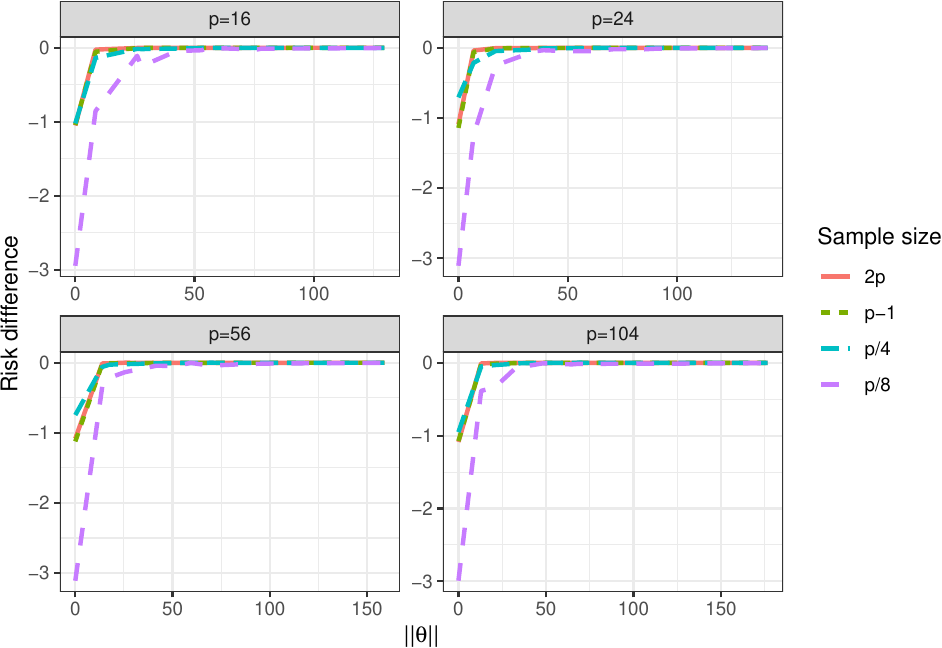}
    \caption{The risk difference between the proposed estimator $\bm{\delta}_{r}\left(\bm{X},\bm{S}\right)$ and the MLE}
    \label{fig:enter-label}
\end{figure}
\section{Concluding remark}\label{sec:conclusion}
 In this paper, we introduced a class of James-Stein type estimators for the mean parameter matrix of a multivariate normal in the context where the covariance-variance is unknown. The problem studied is more general than the one considered in \cite{ChetelatWells} and we established the revised version of their Theorem~2. In particular, we generalized the existing results in three ways. First, we studied  estimation problem concerning the parameter matrix in the context of high-dimensional data. Second, we proposed a class of matrix shrinkage  estimators and, we established the conditions for any member of the proposed class to have a finite risk. Third, we analysed the risk dominance of the proposed class of estimators versus the classical maximum likelihood estimator.  To this end, we establish some mathematical results which are interesting in their own. The additional novelty consists in the fact that, in the context of matrix estimation problem, the derived results hold in the classical case as well as in the context of high-dimensional data. We also present some numerical results which corroborate the theoretical findings. As a direction for future research, 
 it would be interesting to relax the conditions for the established \Cref{Main_Theorem} to hold.

\appendix
\section{Proofs of some fundamental results}\label{sec:append}
\begin{lemma} \label{lemma:traceineq}Let $\bm{A}$ be a symmetric and positive semi-definite matrix. Then, \\ $\textnormal{tr}\left(\bm{A}^{2}\right)\leqslant\left(\textnormal{tr}\left(\bm{A}\right)\right)^{2}$.
\end{lemma}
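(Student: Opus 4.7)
The plan is to reduce the inequality to a statement about eigenvalues via the spectral theorem, then exploit non-negativity of the eigenvalues to conclude by expanding a square.

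First I would apply the spectral decomposition: since $\bm{A}$ is symmetric, we can write $\bm{A}=\bm{P}\bm{D}\bm{P}^{\top}$ with $\bm{P}$ orthogonal and $\bm{D}=\mathrm{diag}(\lambda_{1},\ldots,\lambda_{n})$. Because $\bm{A}$ is positive semi-definite, all eigenvalues satisfy $\lambda_{i}\geqslant 0$. Using the cyclic property of the trace, $\textnormal{tr}(\bm{A})=\sum_{i=1}^{n}\lambda_{i}$ and $\textnormal{tr}(\bm{A}^{2})=\textnormal{tr}(\bm{P}\bm{D}^{2}\bm{P}^{\top})=\sum_{i=1}^{n}\lambda_{i}^{2}$.

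Next I would expand the square of the sum of eigenvalues:
\begin{equation*}
\left(\textnormal{tr}(\bm{A})\right)^{2}=\left(\sum_{i=1}^{n}\lambda_{i}\right)^{2}=\sum_{i=1}^{n}\lambda_{i}^{2}+2\sum_{1\leqslant i<j\leqslant n}\lambda_{i}\lambda_{j}.
\end{equation*}
Since $\lambda_{i}\geqslant 0$ for every $i$, each cross term $\lambda_{i}\lambda_{j}$ is non-negative, and therefore the double sum on the right is non-negative. Hence $(\textnormal{tr}(\bm{A}))^{2}\geqslant \sum_{i=1}^{n}\lambda_{i}^{2}=\textnormal{tr}(\bm{A}^{2})$, which is the claim.

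This proof is essentially a one-liner, so I do not anticipate any real obstacle; the only subtlety worth noting is that non-negativity of the eigenvalues is essential, because for a general symmetric matrix the cross terms could be negative and the inequality could fail. The positive semi-definiteness of $\bm{A}$ is exactly what licenses the step from ``expand the square'' to ``drop the non-negative remainder''.
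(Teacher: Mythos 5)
Your proof is correct and follows essentially the same route as the paper: pass to the eigenvalues via the spectral theorem and use their non-negativity to bound $\sum_i\lambda_i^2$ by $\left(\sum_i\lambda_i\right)^2$. The only difference is that you spell out the expansion of the square and the dropping of the cross terms, which the paper leaves implicit.
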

\begin{proof}
Let $\lambda_{1},\lambda_{2},\dots,\lambda_{n}$ be the eigenvalues of $\bm{A}$. Then, $\lambda_{1}^{2},\lambda_{2}^{2},\dots,\lambda_{n}^{2}$ are the eigenvalues of $\bm{A}^{2}$.  Therefore, since $\lambda_{i}\geq0$, $i=1,2,\dots,n$, we have
$\textnormal{tr}\left(\bm{A}^{2}\right)=\ds\sum_{i=1}^{n}\lambda_{i}^{2}\leqslant \left(\ds\sum_{i=1}^{n}\lambda_{i}\right)^{2}=\left(\textnormal{tr}\left(\bm{A}\right)\right)^{2}$,
this completes the proof.
\end{proof}

    \begin{proof}[Proof of \Cref{Proposition2}]
        $(i)$ \quad Let $\Tilde{\bm{X}}=\bm{A}^{-1}(\bm{X}-\bm{\theta})$ where $\bm{A}$ is a symmetric positive definite square root of $\bm{\Sigma}$. Then, $\Tilde{\bm{X}}\sim \mathcal{N}_{p\times q}(0,\bm{I}_{p}\otimes \bm{I}_{q})$. Therefore, $\bm{X}_{ij}\sim \mathcal{N}(0,1)$. Let $h=\bm{A}^{-1}g(\bm{X},\bm{S})$.
        Then, we have
            $\mathrm{tr}(g^{\top}(\bm{X},\bm{S})\bm{\Sigma}^{-1}(\bm{X}-\bm{\theta})) 
            =\mathrm{tr}(g^{\top}(\bm{X},\bm{S})\bm{A}^{-1}\bm{A}^{-1}(\bm{X}-\bm{\theta})) 
            =\mathrm{tr}(h^{\top}\Tilde{\bm{X}})=\ds\sum_{i}^{}{(h^{\top}\Tilde{\bm{X}})_{ii}}$ and  
  then,
  \begin{eqnarray}
  \mathrm{tr}(g^{\top}(\bm{X},\bm{S})\bm{\Sigma}^{-1}(\bm{X}-\bm{\theta}))=\sum_{i,j}^{}{h^{\top}_{ij}\Tilde{\bm{X}}_{ji}}.\label{P41}
        \end{eqnarray}
        Therefore, by \eqref{P41} we have
\begin{eqnarray*}
    \mathrm{E}_{\scriptscriptstyle \bm{\theta}}\left[\mathrm{tr}\left(g^{\top}(\bm{X},\bm{S})\bm{\Sigma}^{-1}(\bm{X}-\bm{\theta})\right)\right]= \mathrm{E}_{\scriptscriptstyle \bm{\theta}}\left[\sum_{i,j}^{}{h^{\top}_{ij}\Tilde{\bm{X}}_{ji}}\right] 
    = \sum_{i,j}^{}{\mathrm{E}_{\scriptscriptstyle \bm{\theta}}\left[h^{\top}_{ij}\Tilde{\bm{X}}_{ji}\right]} 
    = \sum_{i,j}^{}{\mathrm{E}_{\scriptscriptstyle \bm{\theta}}\left[\Tilde{\bm{X}}_{ji}h^{\top}_{ij}\right]}. 
\end{eqnarray*}
        Therefore, by Lemma~1 of \cite{SteinCharles}, we get
\[
    \sum_{i,j}^{}{\mathrm{E}_{\scriptscriptstyle \bm{\theta}}[\Tilde{\bm{X}}_{ji}h^{\top}_{ij}]}
    = \sum_{i,j}^{}{\textnormal{E}_{\scriptscriptstyle \bm{\theta}}\bigg[\frac{\partial}{\partial \Tilde{\bm{X}}_{ji}}h^{\top}_{ij}\bigg]}
    = \sum_{i,j}^{}{\textnormal{E}_{\scriptscriptstyle \bm{\theta}}\bigg[\frac{\partial}{\partial \Tilde{\bm{X}}_{ji}}h_{ji}\bigg]}
    = \textnormal{E}_{\scriptscriptstyle \bm{\theta}}\bigg[\sum_{i,j}^{}{\frac{\partial}{\partial \Tilde{\bm{X}}_{ji}}h_{ji}}\bigg]=\textnormal{E}_{\scriptscriptstyle \bm{\theta}}\bigg[\sum_{i,j}^{}{\frac{\partial}{\partial \Tilde{\bm{X}}_{ji}}(\bm{A}^{-1}g(\bm{X},\bm{S}))_{ji}}\bigg].
\]
Then,
\begin{flalign}
    \sum_{i,j}^{}{\mathrm{E}_{\scriptscriptstyle \bm{\theta}}[\Tilde{\bm{X}}_{ji}h^{\top}_{ij}]}
    = \textnormal{E}_{\scriptscriptstyle \bm{\theta}}\bigg[\sum_{i,j}^{}{\frac{\partial}{\partial \Tilde{\bm{X}}_{ji}}\sum_{k}^{}{\bm{A}^{-1}_{jk}g(\bm{X},\bm{S})_{ki}}}\bigg] 
    = \textnormal{E}_{\scriptscriptstyle \bm{\theta}}\bigg[\sum_{i,j,k}^{}{\bm{A}^{-1}_{jk}\frac{\partial}{\partial \Tilde{\bm{X}}_{ji}}g(\bm{X},\bm{S})_{ki}}\bigg]. \label{P42} &&
\end{flalign}
        Now, by applying the chain rule in \eqref{P42}, we have
\begin{eqnarray}
 \quad{ }\quad{ }  \ds{\sum_{i,j}^{}}{\mathrm{E}_{\scriptscriptstyle \bm{\theta}}[\Tilde{\bm{X}}_{ji}h^{\top}_{ij}]}
    = \textnormal{E}_{\scriptscriptstyle \bm{\theta}}\bigg[\ds{\sum_{i,j,k,l,\alpha}^{}}{\bm{A}^{-1}_{jk}\frac{\partial}{\partial \bm{X}_{l \alpha}}g(\bm{X},\bm{S})_{ki}\frac{\partial \bm{X}_{l \alpha}}{\partial \Tilde{\bm{X}}_{ji}}}\bigg]. 
    \label{P43}
\end{eqnarray}
        Since $\Tilde{\bm{X}}=\bm{A}^{-1}(\bm{X}-\bm{\theta})$, we have
            $\bm{X}_{l \alpha}=\ds\sum_{t}^{}{A_{l t}\Tilde{\bm{X}}_{t \alpha}}+\bm{\theta}_{l \alpha}$
        then,
        \begin{flalign}
            \frac{\partial \bm{X}_{l \alpha}}{\partial \Tilde{\bm{X}}_{ji}}=\sum_{t}^{}{A_{l t}
            \frac{\partial \Tilde{\bm{X}}_{t \alpha}}{\partial \Tilde{\bm{X}}_{ji}}}=\sum_{t}^{}{A_{l t}\delta_{t j}\delta_{\alpha i}}=A_{l j}\delta_{\alpha i}\label{P44}.
        \end{flalign}
        Therefore, by replacing \eqref{P44} in \eqref{P43}, we get
\begin{eqnarray*}
    \textnormal{E}_{\scriptscriptstyle \bm{\theta}}\bigg[\sum_{i,j,k,l,\alpha}^{}{\bm{A}^{-1}_{jk}\frac{\partial}{\partial \bm{X}_{l \alpha}}g(\bm{X},\bm{S})_{ki}\frac{\partial \bm{X}_{l \alpha}}{\partial \Tilde{\bm{X}}_{ji}}}\bigg] = \textnormal{E}_{\scriptscriptstyle \bm{\theta}}\bigg[\sum_{i,j,k,l,\alpha}^{}{\bm{A}^{-1}_{jk}\frac{\partial}{\partial \bm{X}_{l \alpha}}g(\bm{X},\bm{S})_{ki}A_{l j}\delta_{\alpha i}}\bigg] \\
    = \textnormal{E}_{\scriptscriptstyle \bm{\theta}}\bigg[\sum_{i,k,l}^{}{\frac{\partial}{\partial \bm{X}_{l i}}g(\bm{X},\bm{S})_{ki}\sum_{j}^{}{A_{l j}\bm{A}^{-1}_{jk}}}\bigg]
    = \textnormal{E}_{\scriptscriptstyle \bm{\theta}}\bigg[\sum_{i,k,l}^{}{\frac{\partial}{\partial \bm{X}_{l i}}g(\bm{X},\bm{S})_{ki}(\bm{A}\bm{A}^{-1})_{lk}}\bigg] 
\end{eqnarray*}
and then,
\begin{eqnarray*}
    \textnormal{E}_{\scriptscriptstyle \bm{\theta}}\bigg[\sum_{i,j,k,l,\alpha}^{}{\bm{A}^{-1}_{jk}\frac{\partial}{\partial \bm{X}_{l \alpha}}g(\bm{X},\bm{S})_{ki}\frac{\partial \bm{X}_{l \alpha}}{\partial \Tilde{\bm{X}}_{ji}}}\bigg] 
    = \textnormal{E}_{\scriptscriptstyle \bm{\theta}}\bigg[\sum_{i,k}^{}{\frac{\partial}{\partial \bm{X}_{l i}}g(\bm{X},\bm{S})_{ki}}\bigg] 
    = \textnormal{E}_{\scriptscriptstyle \bm{\theta}}\bigg[\sum_{i,k}^{}{(\nabla_{\bm{X}})_{ki}g^{\top}(\bm{X},\bm{S})_{ik}}\bigg]. 
\end{eqnarray*}
Hence,
\begin{eqnarray*}
    \textnormal{E}_{\scriptscriptstyle \bm{\theta}}\bigg[\sum_{i,j,k,l,\alpha}^{}{\bm{A}^{-1}_{jk}\frac{\partial}{\partial \bm{X}_{l \alpha}}g(\bm{X},\bm{S})_{ki}\frac{\partial \bm{X}_{l \alpha}}{\partial \Tilde{\bm{X}}_{ji}}}\bigg]
    = \textnormal{E}_{\scriptscriptstyle \bm{\theta}}\bigg[\sum_{k}^{}{(\nabla_Xg^{\top}(\bm{X},\bm{S}))_{kk}}\bigg] 
    = \textnormal{E}_{\scriptscriptstyle \bm{\theta}}\big[\mathrm{tr}(\nabla_Xg^{\top}(\bm{X},\bm{S}))\big].
\end{eqnarray*}
    $(ii)$ We have 
    $\mathrm{tr}\biggl(\nabla_Xg^{\top}(\bm{X},\bm{S})\biggr)
    = \ds\sum_{i}^{}{(\nabla_Xg^{\top}(\bm{X},\bm{S}))_{ii}}= \ds\sum_{i,j}^{}{(\nabla_{\bm{X}})_{ij}g^{\top}_{ji}(\bm{X},\bm{S})}$. Then \\ $\mathrm{tr}\biggl(\nabla_Xg^{\top}(\bm{X},\bm{S})\biggr) 
    = \ds\sum_{i,j}^{}{(\nabla_{\bm{X}})_{ij}g_{ij}(\bm{X},\bm{S})} 
    = \ds\sum_{i,j}^{}{\frac{\partial g_{ij}(\bm{X},\bm{S})}{\partial X_{ij}}}$,
which completes the proof.
    \end{proof}

    \begin{proof}[Proof of \Cref{Proposition1}]
               $(i)$ \quad From Part $(ii)$ of \Cref{Proposition2},  we get
                 $$\mathrm{tr}\biggl(\nabla_Xg^{\top}(\bm{X},\bm{S})\biggr)= \ds\sum_{i,j}^{}{\frac{\partial g_{ij}(\bm{X},\bm{S})}{\partial X_{ij}}},$$
            and by Part $(iv)$ of Lemma \ref{Lemma3},  we have
             $\ds{\sum_{i,j}} \frac{\partial g_{ij(\bm{X},\bm{S})}}{\partial X_{ij}} = 2r^{\prime}(F) + \frac{r(F)}{F}(q\mathrm{tr}(\bm{S}\bm{S}^{+}) - 2)$, this proves Part~$(i)$.\\
        $(ii)$ \quad From Part $(i)$ of \Cref{Proposition2},  we have
        \begin{flalign*}
            \mathrm{E}_{\scriptscriptstyle \bm{\theta}}\left[\mathrm{tr}(g^{\top}(\bm{X},\bm{S})\bm{\Sigma}^{-1}(\bm{X}-\bm{\theta}))\right]
            =\mathrm{E}_{\scriptscriptstyle \bm{\theta}}\left[\mathrm{tr}(\nabla_Xg^{\top}(\bm{X},\bm{S}))\right],
        \end{flalign*}
        and from Part $(i)$, we have
        \begin{flalign*}
            \mathrm{tr}(\nabla_X g^{\top}(\bm{X},\bm{S}))=2r^{\prime}(F)+(q\mathrm{tr}(\bm{S}\bm{S}^{+})-2)r(F)/F,
        \end{flalign*}
        and then,
            $\mathrm{E}_{\scriptscriptstyle \bm{\theta}}\left[\mathrm{tr}\left(g^{\top}(\bm{X},\bm{S})\bm{\Sigma}^{-1}(\bm{X}-\bm{\theta})\right)\right]
            =\mathrm{E}_{\scriptscriptstyle \bm{\theta}}\left[2r^{\prime}(F)+r(F)(q\mathrm{tr}(\bm{S}\bm{S}^{+})-2)/F\right]$,
        this proves Part~$(ii)$, and
        which completes the proof.
    \end{proof}

\begin{proposition}\label{Proposition3}
Suppose that the conditions of \Cref{Theorem2} hold.
Then


    \begin{flalign*}
        (i)\quad \mathrm{tr}\left(g^{\top}(\bm{X},\bm{S})\bm{\Sigma}^{-1}g(\bm{X},\bm{S})\right)=\mathrm{tr}\left(\bm{\Sigma}^{-1}\bm{S}\bm{G}\right);&&
    \end{flalign*}
    \begin{flalign*}
        (ii)\quad q\,\mathrm{tr}(\bm{\Sigma}^{-1}\bm{S}\bm{G})=\mathrm{tr}(\Tilde{\bm{S}}\bm{H});&&
    \end{flalign*}
    \begin{flalign*}
        (iii) \quad \mathrm{tr}(\Tilde{\bm{S}}\bm{H})=\mathrm{vec}(\Tilde{\bm{Y}}) \cdot \mathrm{vec}(\Tilde{\bm{Y}}\bm{H}).&&
    \end{flalign*}
    \end{proposition}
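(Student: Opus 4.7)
The plan is to prove each of the three identities by direct algebraic manipulation using the cyclic property of the trace, the Moore--Penrose symmetry relations, and the standard vec/trace identity. Throughout I will use that $\bm{A}$ is the symmetric positive definite square root of $\bm{\Sigma}$, so $\bm{A}^{\top}=\bm{A}$ and $\bm{A}^{-1}\bm{A}^{-1}=\bm{\Sigma}^{-1}$, together with the implicit definition $\Tilde{\bm{S}}=\Tilde{\bm{Y}}^{\top}\Tilde{\bm{Y}}=q\bm{A}^{-1}\bm{S}\bm{A}^{-1}$ that follows from $\Tilde{\bm{Y}}=\sqrt{q}\bm{Y}\bm{A}^{-1}$ and $\bm{S}=\bm{Y}^{\top}\bm{Y}$.

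For Part $(i)$, I would substitute $g(\bm{X},\bm{S})=r(F)\bm{S}\bm{S}^{+}\bm{X}/F$ into $\mathrm{tr}(g^{\top}\bm{\Sigma}^{-1}g)$. Using that $\bm{S}$ is symmetric and positive semi-definite, its Moore--Penrose inverse $\bm{S}^{+}$ is also symmetric and satisfies $(\bm{S}\bm{S}^{+})^{\top}=\bm{S}^{+}\bm{S}=\bm{S}\bm{S}^{+}$. This gives
\[
\mathrm{tr}\bigl(g^{\top}\bm{\Sigma}^{-1}g\bigr)=\frac{r^{2}(F)}{F^{2}}\,\mathrm{tr}\bigl(\bm{X}^{\top}\bm{S}^{+}\bm{S}\,\bm{\Sigma}^{-1}\,\bm{S}\bm{S}^{+}\bm{X}\bigr).
\]
Then I cycle the trace to bring $\bm{\Sigma}^{-1}\bm{S}$ to the front and regroup the remaining factors into $\bm{G}=\frac{r^{2}(F)}{F^{2}}\bm{S}^{+}\bm{X}\bm{X}^{\top}\bm{S}^{+}\bm{S}$, yielding $\mathrm{tr}(\bm{\Sigma}^{-1}\bm{S}\bm{G})$.

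For Part $(ii)$, I would simply insert the definition $\bm{H}=\bm{A}\bm{G}\bm{A}^{-1}$ and use $\Tilde{\bm{S}}=q\bm{A}^{-1}\bm{S}\bm{A}^{-1}$ to obtain
\[
\mathrm{tr}(\Tilde{\bm{S}}\bm{H})=q\,\mathrm{tr}\bigl(\bm{A}^{-1}\bm{S}\bm{A}^{-1}\bm{A}\bm{G}\bm{A}^{-1}\bigr)=q\,\mathrm{tr}\bigl(\bm{A}^{-2}\bm{S}\bm{G}\bigr)=q\,\mathrm{tr}(\bm{\Sigma}^{-1}\bm{S}\bm{G}),
\]
where the middle step uses cancellation of $\bm{A}^{-1}\bm{A}=\bm{I}_{p}$ and cyclicity to move the trailing $\bm{A}^{-1}$ next to the leading $\bm{A}^{-1}$.

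For Part $(iii)$, I would apply the standard identity $\mathrm{vec}(\bm{M})^{\top}\mathrm{vec}(\bm{N})=\mathrm{tr}(\bm{M}^{\top}\bm{N})$ with $\bm{M}=\Tilde{\bm{Y}}$ and $\bm{N}=\Tilde{\bm{Y}}\bm{H}$, which yields $\mathrm{vec}(\Tilde{\bm{Y}})\cdot\mathrm{vec}(\Tilde{\bm{Y}}\bm{H})=\mathrm{tr}(\Tilde{\bm{Y}}^{\top}\Tilde{\bm{Y}}\bm{H})=\mathrm{tr}(\Tilde{\bm{S}}\bm{H})$. No obstacle of substance arises; the only delicate point is the symmetry relation $\bm{S}^{+}\bm{S}=\bm{S}\bm{S}^{+}$ in Part $(i)$, which is immediate from the spectral decomposition of the symmetric matrix $\bm{S}$ and the corresponding definition of the Moore--Penrose inverse.
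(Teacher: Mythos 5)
Your proof is correct: the paper itself only remarks that this proposition ``follows from algebraic computations,'' and your argument supplies precisely those computations -- the symmetry of $\bm{S}\bm{S}^{+}$, cyclicity of the trace, the identification $\Tilde{\bm{S}}=\Tilde{\bm{Y}}^{\top}\Tilde{\bm{Y}}=q\bm{A}^{-1}\bm{S}\bm{A}^{-1}$ with $\bm{A}^{-2}=\bm{\Sigma}^{-1}$, and the identity $\mathrm{vec}(\bm{M})^{\top}\mathrm{vec}(\bm{N})=\mathrm{tr}(\bm{M}^{\top}\bm{N})$ -- in the same spirit as the paper. No gaps; this matches the intended argument.
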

 The proof follows from algebraic computations.
    \begin{proposition}\label{Proposition3bis}
Suppose that the conditions of \Cref{Theorem2} hold and suppose that  $\textnormal{E}_{\scriptscriptstyle \bm{\theta}}[|\mathrm{div}_{\mathrm{vec}(\Tilde{\bm{Y}})} \mathrm{vec}(\Tilde{\bm{Y}}\bm{H})|]<\infty$.
Then

%
 \begin{flalign*}
        (i)\quad \mathrm{E}_{\scriptscriptstyle \bm{\theta}}[\mathrm{vec}(\Tilde{\bm{Y}})\cdot \mathrm{vec}(\Tilde{\bm{Y}}\bm{H})]=\mathrm{E}_{\scriptscriptstyle \bm{\theta}}[\mathrm{div}_{\mathrm{vec}(\Tilde{\bm{Y}})} \mathrm{vec}(\Tilde{\bm{Y}}\bm{H})]; &&
    \end{flalign*}
    \begin{flalign*}
        (ii) \quad \mathrm{E}_{\scriptscriptstyle \bm{\theta}}\left[\mathrm{tr}\left(g^{\top}(\bm{X},\bm{S})\bm{\Sigma}^{-1}g(\bm{X},\bm{S})\right)\right]
        =\mathrm{E}_{\scriptscriptstyle \bm{\theta}}[\mathrm{div}_{\mathrm{vec}(\Tilde{\bm{Y}})}\mathrm{vec}(\Tilde{\bm{Y}}\bm{H})]\Big/q. 
        &&
    \end{flalign*}
    \end{proposition}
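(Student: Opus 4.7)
The plan is to establish Part~$(i)$ as a multivariate Stein-type identity applied to the standard Gaussian vector $\mathrm{vec}(\Tilde{\bm{Y}})$, and then to deduce Part~$(ii)$ by chaining the three identities in Proposition~\ref{Proposition3} together with Part~$(i)$.

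For Part~$(i)$, I would first observe that since $\bm{Y}\sim \mathcal{N}_{nq\times p}(\bm{0},\bm{I}_{nq}\otimes (\bm{\Sigma}/q))$ and $\bm{A}$ is the positive-definite square root of $\bm{\Sigma}$, the rescaled matrix $\Tilde{\bm{Y}}=\sqrt{q}\,\bm{Y}\bm{A}^{-1}$ satisfies $\mathrm{vec}(\Tilde{\bm{Y}})\sim \mathcal{N}_{nqp}(\bm{0},\bm{I}_{nqp})$. Since $\bm{S}=\bm{Y}'\bm{Y}=\bm{A}\Tilde{\bm{Y}}'\Tilde{\bm{Y}}\bm{A}/q$, the matrix $\bm{S}$, its Moore--Penrose inverse, and therefore $\bm{H}=\bm{A}\bm{G}\bm{A}^{-1}$ are all measurable functions of $\Tilde{\bm{Y}}$ and $\bm{X}$. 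Because $\bm{X}$ is independent of $\Tilde{\bm{Y}}$, conditioning on $\bm{X}$ reduces the problem to the classical Stein identity for the standard Gaussian $\mathrm{vec}(\Tilde{\bm{Y}})$: for each component $(i,j)$,
\[
\mathrm{E}_{\scriptscriptstyle \bm{\theta}}\!\left[\Tilde{Y}_{ij}\,(\Tilde{\bm{Y}}\bm{H})_{ij}\,\big|\,\bm{X}\right]=\mathrm{E}_{\scriptscriptstyle \bm{\theta}}\!\left[\frac{\partial (\Tilde{\bm{Y}}\bm{H})_{ij}}{\partial \Tilde{Y}_{ij}}\,\bigg|\,\bm{X}\right].
\]
Summing over $(i,j)$ and taking the outer expectation with respect to $\bm{X}$ yields Part~$(i)$.

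The principal technical obstacle is verifying the regularity hypotheses required by the Gaussian integration-by-parts formula, namely that $\mathrm{vec}(\Tilde{\bm{Y}}\bm{H})$ is weakly differentiable in $\mathrm{vec}(\Tilde{\bm{Y}})$ with an integrable divergence. This is delicate because the Moore--Penrose inverse of $\bm{S}$ is not smooth across the stratum where $\mathrm{rank}(\bm{S})$ changes. However, under Assumption~\ref{ass:model} with $p>nq$, the matrix $\Tilde{\bm{Y}}$ has full row rank $nq$ almost surely, and on that open set $\bm{S}^{+}$ and hence $\Tilde{\bm{Y}}\bm{H}$ depend real-analytically on $\Tilde{\bm{Y}}$; since the complement has Lebesgue measure zero, classical integration by parts applies. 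The hypothesis $\mathrm{E}_{\scriptscriptstyle \bm{\theta}}[|\mathrm{div}_{\mathrm{vec}(\Tilde{\bm{Y}})}\mathrm{vec}(\Tilde{\bm{Y}}\bm{H})|]<\infty$ then legitimises the interchange of integration and differentiation and ensures the boundary terms in the Gaussian density vanish.

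For Part~$(ii)$, I would simply chain the three identities of Proposition~\ref{Proposition3} to obtain the pointwise relation
\[
q\,\mathrm{tr}\!\left(g^{\top}(\bm{X},\bm{S})\bm{\Sigma}^{-1}g(\bm{X},\bm{S})\right)=\mathrm{vec}(\Tilde{\bm{Y}})\cdot \mathrm{vec}(\Tilde{\bm{Y}}\bm{H}).
\]
Taking expectations on both sides and applying Part~$(i)$ to the right-hand side gives
\[
q\,\mathrm{E}_{\scriptscriptstyle \bm{\theta}}\!\left[\mathrm{tr}(g^{\top}(\bm{X},\bm{S})\bm{\Sigma}^{-1}g(\bm{X},\bm{S}))\right]=\mathrm{E}_{\scriptscriptstyle \bm{\theta}}\!\left[\mathrm{div}_{\mathrm{vec}(\Tilde{\bm{Y}})}\mathrm{vec}(\Tilde{\bm{Y}}\bm{H})\right],
\]
from which (ii) follows after dividing by $q$. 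This last step is purely algebraic and does not introduce any further integrability concern beyond the one already assumed.
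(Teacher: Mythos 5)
Your proposal is correct and follows essentially the same route as the paper: Part~$(i)$ is obtained by applying Stein's Gaussian integration-by-parts identity componentwise to the standard normal entries of $\mathrm{vec}(\Tilde{\bm{Y}})$ and summing, and Part~$(ii)$ follows by chaining the three pointwise identities of Proposition~\ref{Proposition3} with Part~$(i)$ and dividing by $q$. Your added discussion of the almost-sure full row rank of $\Tilde{\bm{Y}}$ and the smoothness of $\bm{S}^{+}$ off the rank-degenerate set is a welcome regularity detail that the paper leaves implicit, but it does not change the argument.
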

          \begin{proof}
          $(i)$ 
          \text{ Since }  $\bm{Y}\sim \mathcal{N}_{nq\times p}(0,\bm{I}_{nq}\otimes (\bm{\Sigma}/q))$, we have $\Tilde{\bm{Y}}=\sqrt{q}\bm{Y}\bm{A}^{-1} \sim \mathcal{N}_{nq\times p}(0,\bm{I}_{nq}\otimes \bm{I}_{p})$. 
Then,
             $\mathrm{vec}(\Tilde{\bm{Y}}) \sim  \mathcal{N}_{npq}(0,\bm{I}_{npq})$.
        Therefore, 
            $\Tilde{Y}_{\alpha i} \sim \mathcal{N}(0,1)$.
        Also, we have
        \begin{flalign*}
            \mathrm{vec}(\Tilde{\bm{Y}})\cdot \mathrm{vec}(\Tilde{\bm{Y}}\bm{H})=\sum_{\alpha,i}^{}{\Tilde{Y}_{\alpha i}(\Tilde{\bm{Y}}\bm{H})_{\alpha i}}=\sum_{\alpha,i}^{}{\Tilde{Y}_{\alpha i}\sum_{j}^{}{\Tilde{\bm{Y}}_{\alpha j}H_{j i}}}=\sum_{\alpha,i,j}^{}{\Tilde{Y}_{\alpha i}\Tilde{\bm{Y}}_{\alpha j}H_{j i}}.&&
        \end{flalign*}
        Therefore, we have
        \begin{flalign*}
            \mathrm{E}_{\scriptscriptstyle \bm{\theta}}[\mathrm{vec}(\Tilde{\bm{Y}})\cdot \mathrm{vec}(\Tilde{\bm{Y}}\bm{H})]=\mathrm{E}_{\scriptscriptstyle \bm{\theta}}\left[\sum_{\alpha,i,j}^{}{\Tilde{Y}_{\alpha i}\Tilde{\bm{Y}}_{\alpha j}H_{j i}}\right]&=\sum_{\alpha,i,j}^{}{\mathrm{E}_{\scriptscriptstyle \bm{\theta}}\left[\Tilde{Y}_{\alpha i}\Tilde{\bm{Y}}_{\alpha j}H_{j i}\right]} 
            =\sum_{\alpha,i,j}^{}{\mathrm{E}_{\scriptscriptstyle \bm{\theta}}\left[\Tilde{Y}_{\alpha i}g_{j}(\Tilde{Y}_{\alpha i})\right]},&&
        \end{flalign*}
        where $g_{j}(\Tilde{Y}_{\alpha i})=\Tilde{Y}_{\alpha j}H_{j i}$. 
Hence, by Lemma 1 of \cite{SteinCharles}, we get
\begin{flalign*}
    \sum_{\alpha,i,j}^{}{\mathrm{E}_{\scriptscriptstyle \bm{\theta}}[\Tilde{Y}_{\alpha i}g_{j}(\Tilde{Y}_{\alpha i})]}=\sum_{\alpha,i,j}^{}{\mathrm{E}_{\scriptscriptstyle \bm{\theta}}\left[\frac{\partial}{\partial \Tilde{Y}_{\alpha i}}g_{j}(\Tilde{Y}_{\alpha i})\right]} 
    =\mathrm{E}_{\scriptscriptstyle \bm{\theta}}\left[\sum_{\alpha,i,j}^{}{\frac{\partial}{\partial \Tilde{Y}_{\alpha i}}g_{j}(\Tilde{Y}_{\alpha i})}\right] 
    =\mathrm{E}_{\scriptscriptstyle \bm{\theta}}\left[\sum_{\alpha,i,j}^{}{\frac{\partial}{\partial \Tilde{Y}_{\alpha i}}\Tilde{\bm{Y}}_{\alpha j}H_{j i}}\right].
\end{flalign*}
Then,
\begin{eqnarray*}
    \sum_{\alpha,i,j}^{}{\mathrm{E}_{\scriptscriptstyle \bm{\theta}}[\Tilde{Y}_{\alpha i}g_{j}(\Tilde{Y}_{\alpha i})]}
    =\mathrm{E}_{\scriptscriptstyle \bm{\theta}}\left[\sum_{\alpha,i}^{}{\frac{\partial}{\partial \Tilde{Y}_{\alpha i}}\sum_{j}^{}{\Tilde{\bm{Y}}_{\alpha j}H_{j i}}}\right] 
    =\mathrm{E}_{\scriptscriptstyle \bm{\theta}}\left[\sum_{\alpha,i}^{}{\frac{\partial}{\partial \Tilde{Y}_{\alpha i}}(\Tilde{\bm{Y}}\bm{H})_{\alpha i}}\right] 
    =\mathrm{E}_{\scriptscriptstyle \bm{\theta}}\left[\nabla_{\Tilde{\bm{Y}}} (\Tilde{\bm{Y}}\bm{H})\right],
\end{eqnarray*}
and then,
\begin{eqnarray*}
    \sum_{\alpha,i,j}^{}{\mathrm{E}_{\scriptscriptstyle \bm{\theta}}[\Tilde{Y}_{\alpha i}g_{j}(\Tilde{Y}_{\alpha i})]}
    =\mathrm{E}_{\scriptscriptstyle \bm{\theta}}[\mathrm{div}_{\mathrm{vec}(\Tilde{\bm{Y}})}\mathrm{vec}(\Tilde{\bm{Y}}\bm{H})].&&
\end{eqnarray*}
$(ii)$ \quad From Part $(i)$ to $(iv)$, we have
\begin{eqnarray*}
    \mathrm{E}_{\scriptscriptstyle \bm{\theta}}\left[\mathrm{tr}\left(g^{\top}(\bm{X},\bm{S})\bm{\Sigma}^{-1}g(\bm{X},\bm{S})\right)\right] 
    =\mathrm{E}_{\scriptscriptstyle \bm{\theta}}\left[\mathrm{tr}(\bm{\Sigma}^{-1}\bm{S}\bm{G})\right]
    =\mathrm{E}_{\scriptscriptstyle \bm{\theta}}\left[\mathrm{tr}(\Tilde{\bm{S}}\bm{H})\right]\Big /q 
    =\mathrm{E}_{\scriptscriptstyle \bm{\theta}}\left[\mathrm{vec}(\Tilde{\bm{Y}})\cdot \mathrm{vec}(\Tilde{\bm{Y}}\bm{H})\right]\Big /q,
\end{eqnarray*}
and then,
\begin{eqnarray*}
    \mathrm{E}_{\scriptscriptstyle \bm{\theta}}\left[\mathrm{tr}\left(g^{\top}(\bm{X},\bm{S})\bm{\Sigma}^{-1}g(\bm{X},\bm{S})\right)\right]
    =\mathrm{E}_{\scriptscriptstyle \bm{\theta}}\left[\mathrm{div}_{\mathrm{vec}(\Tilde{\bm{Y}})} \mathrm{vec}(\Tilde{\bm{Y}}\bm{H})\right]\Big / q,&&
\end{eqnarray*}
which completes the proof.
        \end{proof}

\begin{proposition}\label{Proposition4}
   Suppose that the conditions of \Cref{Theorem2} hold.
    Then,

   \begin{flalign*}
       \nabla_{\Tilde{\bm{Y}}} \cdot (\Tilde{\bm{Y}}\bm{H}) &= \mathrm{div}_{\mathrm{vec}(\Tilde{\bm{Y}})}\mathrm{vec}(\Tilde{\bm{Y}}\bm{H}) = nq\mathrm{tr}(\bm{G}) + \mathrm{tr}(\bm{Y}^{\top}(\nabla_{Y}\bm{G}^{\top}))
       = nq\mathrm{tr}(\bm{G}) + \sum_{\alpha,\beta,k}^{}{Y_{\alpha k}\frac{\partial G_{k \beta}}{\partial Y_{\alpha \beta}}}.
   \end{flalign*}
\end{proposition}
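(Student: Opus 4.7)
The plan is to unfold the divergence entrywise, apply the Leibniz rule, and then convert derivatives with respect to $\Tilde{\bm{Y}}$ into derivatives with respect to $\bm{Y}$ via the chain rule, exploiting the bijection $\bm{Y} = \Tilde{\bm{Y}}\bm{A}/\sqrt{q}$. The first equality in the statement is immediate from the definition of $\mathrm{div}_{\mathrm{vec}(\bm{A})}\mathrm{vec}(\bm{B})$ given in \Cref{Sec:fundamental}, so only the two subsequent equalities require work.

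Concretely, I would start by writing
\begin{equation*}
\mathrm{div}_{\mathrm{vec}(\Tilde{\bm{Y}})}\mathrm{vec}(\Tilde{\bm{Y}}\bm{H}) \;=\; \sum_{\alpha,i,j}\frac{\partial(\Tilde{Y}_{\alpha j} H_{ji})}{\partial \Tilde{Y}_{\alpha i}}
\end{equation*}
and apply the product rule. The term in which the derivative hits the $\Tilde{Y}_{\alpha j}$ factor collapses via $\partial \Tilde{Y}_{\alpha j}/\partial \Tilde{Y}_{\alpha i}=\delta_{ij}$ to $\sum_{\alpha,i}H_{ii}=nq\,\mathrm{tr}(\bm{H})$, and this equals $nq\,\mathrm{tr}(\bm{G})$ by cyclic invariance of the trace since $\bm{H}=\bm{A}\bm{G}\bm{A}^{-1}$. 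The remaining piece to identify is $\sum_{\alpha,i,j}\Tilde{Y}_{\alpha j}\,\partial H_{ji}/\partial \Tilde{Y}_{\alpha i}$, and it must be shown to equal $\mathrm{tr}(\bm{Y}^{\top}(\nabla_Y\bm{G}^{\top}))$.

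For this second piece I would substitute $H_{ji}=\sum_{k,l}A_{jk}G_{kl}(\bm{A}^{-1})_{li}$, apply the chain rule
\begin{equation*}
\frac{\partial G_{kl}}{\partial \Tilde{Y}_{\alpha i}} \;=\; \frac{1}{\sqrt{q}}\sum_{m}A_{im}\frac{\partial G_{kl}}{\partial Y_{\alpha m}},
\end{equation*}
which is derived from $Y_{\alpha m}=\tfrac{1}{\sqrt{q}}\sum_{n}\Tilde{Y}_{\alpha n}A_{nm}$, and rewrite $\Tilde{Y}_{\alpha j}=\sqrt{q}\sum_{p}Y_{\alpha p}(\bm{A}^{-1})_{pj}$. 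The two factors of $\sqrt{q}$ then cancel, and the contractions $\sum_{j}(\bm{A}^{-1})_{pj}A_{jk}=\delta_{pk}$ and $\sum_{i}A_{im}(\bm{A}^{-1})_{li}=\delta_{lm}$ collapse the multi-index sum to $\sum_{\alpha,k,\beta}Y_{\alpha k}\,\partial G_{k\beta}/\partial Y_{\alpha \beta}$. Rewriting this expression as $\mathrm{tr}(\bm{Y}^{\top}(\nabla_Y\bm{G}^{\top}))$ is then a direct unpacking of the definition of $\nabla_{\bm{A}}\bm{B}$ given in \Cref{Sec:fundamental}.

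The only real obstacle is the index bookkeeping: one must track carefully which indices are rows versus columns of $\Tilde{\bm{Y}}$ and $\bm{Y}$ and which pairs of $\bm{A}$ with $\bm{A}^{-1}$ are destined to contract to identity matrices. Nothing analytic is required, since the identity is a pointwise algebraic manipulation valid wherever $\bm{G}$ is differentiable in $\bm{Y}$; this differentiability is ensured by the hypotheses on $r$ in \Cref{Theorem2}.
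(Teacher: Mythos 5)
Your proposal is correct and follows essentially the same route as the paper's proof: entrywise expansion of the divergence, the Leibniz rule giving $nq\,\mathrm{tr}(\bm{H})=nq\,\mathrm{tr}(\bm{G})$ for the first term, and the chain rule through $\bm{Y}=\Tilde{\bm{Y}}\bm{A}/\sqrt{q}$ with the $\bm{A}$, $\bm{A}^{-1}$ contractions collapsing the second term to $\sum_{\alpha,\beta,k}Y_{\alpha k}\,\partial G_{k\beta}/\partial Y_{\alpha\beta}=\mathrm{tr}(\bm{Y}^{\top}\nabla_{\bm{Y}}\bm{G}^{\top})$. The only cosmetic difference is that you expand $\Tilde{Y}_{\alpha j}$ in terms of $\bm{Y}\bm{A}^{-1}$ before contracting, whereas the paper contracts $\sum_{j}\Tilde{Y}_{\alpha j}A_{jk}=\sqrt{q}\,Y_{\alpha k}$ directly.
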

  \begin{proof} 
  We have $\nabla_{\Tilde{\bm{Y}}} \cdot (\Tilde{\bm{Y}}\bm{H})= \mathrm{div}_{\textnormal{vec}(\Tilde{\bm{Y}})} \cdot \mathrm{vec}(\Tilde{\bm{Y}}\bm{H})$, then
        \begin{flalign*}
             \nabla_{\Tilde{\bm{Y}}} \cdot (\Tilde{\bm{Y}}\bm{H}) & 
             = \sum_{\alpha,i}^{}{(\mathrm{div}_{\Tilde{\bm{Y}}})_{\alpha i}(\Tilde{\bm{Y}}\bm{H})_{\alpha i}} 
             =\sum_{\alpha,i}^{}{\frac{\partial}{\partial \Tilde{Y}_{\alpha i}}\sum_{j}^{}{\Tilde{\bm{Y}}_{\alpha j}H_{ji}}} 
             =\sum_{\alpha,i,j}^{}{\frac{\partial}{\partial \Tilde{Y}_{\alpha i}}(\Tilde{\bm{Y}}_{\alpha j}H_{ji})}. 
        \end{flalign*}
         Let $\mathfrak{J}_{1}= \ds{\sum_{\alpha,i,j}^{}}{\left(\frac{\partial}{\partial \Tilde{Y}_{\alpha i}}\Tilde{\bm{Y}}_{\alpha j}\right)H_{ji}}+\ds{\sum_{\alpha,i,j}^{}}{\Tilde{\bm{Y}}_{\alpha j}\left(\frac{\partial}{\partial \Tilde{Y}_{\alpha i}}H_{ji}\right)}$. The previous result yields
        \begin{eqnarray}
                   \quad{ } \quad{ }  \nabla_{\Tilde{\bm{Y}}} \cdot (\Tilde{\bm{Y}}\bm{H})  =\sum_{\alpha,i,j}^{}{\left(\left(\frac{\partial}{\partial \Tilde{Y}_{\alpha i}}\Tilde{\bm{Y}}_{\alpha j}\right)H_{ji}+ \Tilde{\bm{Y}}_{\alpha j}\left(\frac{\partial}{\partial \Tilde{Y}_{\alpha i}}H_{ji}\right)\right )},
             \label{p4i}
        \end{eqnarray}
        then
              \begin{eqnarray}
                   \nabla_{\Tilde{\bm{Y}}} \cdot \left(\Tilde{\bm{Y}}\bm{H}\right)
             =\sum_{\alpha,i,j}^{}{\left(\frac{\partial}{\partial \Tilde{Y}_{\alpha i}}\Tilde{\bm{Y}}_{\alpha j}\right)H_{ji}}+\sum_{\alpha,i,j}^{}{\Tilde{\bm{Y}}_{\alpha j}\left(\frac{\partial}{\partial \Tilde{Y}_{\alpha i}}H_{ji}\right)}.
             \label{p4i}
        \end{eqnarray}
      Then, by applying the chain rule in \eqref{p4i}, we get
        \begin{eqnarray*}
             & \mathfrak{J}_{1}=\ds{\sum_{\alpha,i,j}^{}}{\delta_{ij}H_{ji}}+\ds{\sum_{\alpha,i,j}^{}}{\Tilde{\bm{Y}}_{\alpha j}\ds{\sum_{k,\beta}^{}}{\frac{\partial}{\partial Y_{k \beta}}H_{ji}\frac{\partial Y_{k \beta}}{\partial \Tilde{Y}_{\alpha i}}}} 
             =\ds{\sum_{\alpha,i}^{}}{H_{ii}}+\ds{\sum_{\alpha,i,j,k,\beta}^{}}{\Tilde{\bm{Y}}_{\alpha j}\frac{\partial}{\partial Y_{k \beta}}H_{ji}\frac{\partial (\Tilde{\bm{Y}}A)_{k \beta}}{\partial \Tilde{Y}_{\alpha i}}}\Big /\sqrt{q}\nonumber\\
             &=\ds{\sum_{\alpha}^{}}{\mathrm{tr}(\bm{H})}+\sum_{\alpha,i,j,k,\beta}^{}{\Tilde{\bm{Y}}_{\alpha j}\frac{\partial}{\partial Y_{k \beta}}H_{ji}\left(\frac{\partial}{\partial \Tilde{Y}_{\alpha i}}\ds{\sum_{l}^{}}{\Tilde{\bm{Y}}_{kl}A_{l \beta}}\right)}\Big /\sqrt{q} 
             =nq\mathrm{tr}(\bm{H})+\ds{\sum_{\alpha,i,j,k,\beta,l}^{}}{\Tilde{\bm{Y}}_{\alpha j}\frac{\partial}{\partial Y_{k \beta}}H_{ji}\left(\frac{\partial \Tilde{\bm{Y}}_{kl}}{\partial \Tilde{Y}_{\alpha i}}\right)A_{l \beta}}\Big /\sqrt{q}.
        \end{eqnarray*}
        Then,
          \begin{eqnarray*}
             & \mathfrak{J}_{1}
             =nq\mathrm{tr}(\bm{A}\bm{G}\bm{A}^{-1})+\ds{\sum_{\alpha,i,j,k,\beta,l}^{}}{\Tilde{\bm{Y}}_{\alpha j}\frac{\partial}{\partial Y_{k \beta}}H_{ji}(\delta_{\alpha k}\delta_{il})A_{l \beta}}\Big /\sqrt{q} 
             =nq\mathrm{tr}(\bm{A}^{-1}AG)+\ds{\sum_{\alpha,i,j,\beta}^{}}{\Tilde{\bm{Y}}_{\alpha j}\frac{\partial}{\partial Y_{\alpha \beta}}H_{ji}A_{i \beta}}\Big /\sqrt{q}\nonumber\\
             &\nonumber=nq\mathrm{tr}(\bm{G})+\ds{\sum_{\alpha,i,j,\beta}^{}}{\Tilde{\bm{Y}}_{\alpha j}\frac{\partial}{\partial Y_{\alpha \beta}}(\bm{A}\bm{G}\bm{A}^{-1})_{ji}A_{i \beta}}\Big /\sqrt{q}
             =nq\mathrm{tr}(\bm{G})+\ds{\sum_{\alpha,i,j,\beta}^{}}{\Tilde{\bm{Y}}_{\alpha j}\frac{\partial}{\partial Y_{\alpha\beta}}\left(\ds{\sum_{k,l}^{}}{A_{jk}G_{kl}A_{li}^{-1}}\right)A_{i \beta}}\Big /\sqrt{q}
        \end{eqnarray*}
        and then, $\mathfrak{J}_{1}
             =nq\mathrm{tr}(\bm{G})+\ds{\sum_{\alpha,i,j,\beta,k,l}^{}}{\Tilde{\bm{Y}}_{\alpha j}A_{jk}\frac{\partial}{\partial Y_{\alpha\beta}}G_{kl}A_{li}^{-1}}A_{i \beta}\Big /\sqrt{q}$. 
             This gives,
                  \begin{eqnarray*}
             & \mathfrak{J}_{1}
             =nq\mathrm{tr}(\bm{G})+\ds{\sum_{\alpha,\beta,k,l}^{}}\left(\sum_{j}^{}{\Tilde{\bm{Y}}_{\alpha j}A_{jk}}\right)\frac{\partial G_{kl}}{\partial Y_{\alpha \beta}}\left(\ds{\sum_{i}^{}}{A_{li}^{-1}}A_{i \beta}\right)\Big /\sqrt{q} 
             &= nq\mathrm{tr}(\bm{G})+\ds{\sum_{\alpha,\beta,k,l}^{}}{(\Tilde{\bm{Y}}A)_{\alpha k}\frac{\partial G_{kl}}{\partial Y_{\alpha \beta}}(\bm{A}^{-1}A)_{l \beta}}\Big /\sqrt{q},
        \end{eqnarray*}
        and then,
                          \begin{eqnarray}
             & \mathfrak{J}_{1}
             = nq\mathrm{tr}(\bm{G})+\ds{\sum_{\alpha,\beta,k}^{}}{Y_{\alpha k}\frac{\partial G_{k \beta}}{\partial Y_{\alpha \beta}}}.
             \label{p4ii}
        \end{eqnarray}
        Also, we have $\mathrm{tr}(\bm{Y}^{\top}(\nabla_{Y}\bm{G}^{\top})) = \ds{\sum_{k}^{}}{(\bm{Y}^{\top}(\nabla_{Y}\bm{G}^{\top}))_{kk}}$, then
        \begin{flalign*}
            \nonumber &\mathrm{tr}(\bm{Y}^{\top}(\nabla_{Y}\bm{G}^{\top}))
            =\sum_{k,\alpha}^{}{\bm{Y}^{\top}_{k \alpha}(\nabla_{Y}\bm{G}^{\top})_{\alpha k}}
            =\sum_{k,\alpha}^{}{\bm{Y}^{\top}_{k \alpha}\sum_{\beta}^{}{(\nabla_{Y})_{\alpha \beta}\bm{G}^{\top}_{\beta k}}}
            =\sum_{\alpha,\beta,k}^{}{\bm{Y}^{\top}_{k \alpha}\frac{\partial \bm{G}^{\top}_{\beta k}}{\partial Y_{\alpha \beta}}}
        \end{flalign*}
        this gives,
              \begin{flalign}
            \mathrm{tr}(\bm{Y}^{\top}(\nabla_{Y}\bm{G}^{\top}))
            &=\sum_{\alpha,\beta,k}^{}{Y_{\alpha k}\frac{\partial G_{k \beta}}{\partial Y_{\alpha \beta}}},
            &&\label{p4iii}
        \end{flalign}
and then, the proof follows from the relations \eqref{p4ii} and \eqref{p4iii},
        this  completes the proof.
   \end{proof}
For the sake of simplicity, in the sequel, we write $\bm{I}$ to stand for $\bm{I}_{p}$.
\begin{lemma}\label{Lemma1}
 Let $\bm{Y}$, $\bm{X}$, $\bm{S}$ and $F$ be as in \Cref{Theorem2}.
 Let $\bm{A} \in \mathbb{R}^{ k\times p}$ and $\bm{B} \in \mathbb{R}^{p\times h}$. Then, 
    \begin{flalign*}
           (i)\quad \left(\bm{A}\frac{\partial \bm{S}}{\partial Y_{\alpha \beta}}\bm{B}\right)_{kl}=A_{k\beta}(\bm{Y}\bm{B})_{\alpha l}+(\bm{A}\bm{Y}^{\top})_{k\alpha}B_{\beta l};&&
    \end{flalign*}

  \begin{flalign*}
        (ii)\quad \left(\frac{\partial \bm{X}^{\top}\bm{S}^{+}\bm{X}}{\partial Y_{\alpha \beta}} \right)_{kk} =&-2(\bm{X}^{\top}\bm{S}^{+}\bm{Y}^{\top})_{k \alpha}(\bm{S}^{+}\bm{X})_{\beta k}
        +2(\bm{X}^{\top}\bm{S}^{+}\bm{S}^{+}\bm{Y}^{\top})_{k \alpha}((\bm{I}-\bm{S}\bm{S}^{+})\bm{X})_{\beta k};&&
  \end{flalign*}
    \begin{flalign*}
        (iii)\quad \frac{\partial F}{\partial Y_{\alpha \beta}}&=-2(\bm{S}^{+}\bm{X}\bm{X}^{\top}\bm{S}^{+}\bm{Y}^{\top})_{\beta \alpha}+2((\bm{I}-\bm{S}\bm{S}^{+})\bm{X}\bm{X}^{\top}\bm{S}^{+}\bm{S}^{+}\bm{Y}^{\top})_{\beta \alpha};&&
    \end{flalign*}
$(iv)$
\begin{eqnarray*}
      & \bigg\{ \frac{\partial \bm{S}^{+}\bm{X}\bm{X}^{\top}\bm{S}\bm{S}^{+}}{\partial Y_{\alpha \beta}}\bigg\}_{kl}
     =-\bm{S}^{+}_{k \beta}(\bm{Y}\bm{S}^{+}\bm{X}\bm{X}^{\top}\bm{S}\bm{S}^{+})_{\alpha l}- (\bm{S}^{+}\bm{Y}^{\top})_{k \alpha}(\bm{S}^{+}\bm{X}\bm{X}^{\top}\bm{S}\bm{S}^{+})_{\beta l}\\
     & +(\bm{I}-\bm{S}\bm{S}^{+})_{k \beta}(\bm{Y}\bm{S}^{+}\bm{S}\bm{X}\bm{X}^{\top}\bm{S}\bm{S}^{+})_{\alpha l} 
     +(\bm{S}^{+}\bm{S}^{+}\bm{Y}^{\top})_{k \alpha}((\bm{I}-\bm{S}\bm{S}^{+})\bm{X}\bm{X}^{\top}\bm{S}\bm{S}^{+})_{\beta l}\\
     &
     +(\bm{S}^{+}\bm{X}\bm{X}^{\top})_{k \beta}(\bm{Y}\bm{S}^{+})_{\alpha l}+(\bm{S}^{+}\bm{X}\bm{X}^{\top}\bm{Y}^{\top})_{k \alpha}\bm{S}^{+}_{\beta l}
     -(\bm{S}^{+}\bm{X}\bm{X}^{\top}\bm{S}\bm{S}^{+})_{k \beta}(\bm{Y}\bm{S}^{+})_{\alpha l} \\
     &
      -(\bm{S}^{+}\bm{X}\bm{X}^{\top}\bm{S}\bm{S}^{+}\bm{Y}^{\top})_{k \alpha}\bm{S}^{+}_{\beta l} 
     +(\bm{S}^{+}\bm{X}\bm{X}^{\top}\bm{S}^{+}\bm{Y}^{\top})_{k \alpha}(\bm{I}-\bm{S}\bm{S}^{+})_{\beta l}. 
\end{eqnarray*}
\end{lemma}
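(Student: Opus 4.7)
The plan is to treat the four parts in sequence, since each later part feeds on the earlier ones. Throughout I will use the symmetry of $\bm{S}$ and $\bm{S}^+$ (both are symmetric positive semi-definite with $\bm{S}\bm{S}^+=\bm{S}^+\bm{S}$, the orthogonal projector onto $\mathrm{Range}(\bm{S})=\mathrm{Range}(\bm{Y}^\top)$), and the resulting identities
$\bm{Y}(\bm{I}-\bm{S}\bm{S}^+)=\bm{0}$ and $(\bm{I}-\bm{S}\bm{S}^+)\bm{Y}^\top=\bm{0}$, which follow from $\bm{S}\bm{S}^+\bm{Y}^\top=\bm{Y}^\top$.

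For Part $(i)$, I would start from $\bm{S}=\bm{Y}^\top\bm{Y}$ entrywise, so $S_{ij}=\sum_m Y_{mi}Y_{mj}$, and differentiate to obtain $\partial S_{ij}/\partial Y_{\alpha\beta}=\delta_{i\beta}Y_{\alpha j}+Y_{\alpha i}\delta_{j\beta}$. Sandwiching between $\bm{A}$ and $\bm{B}$ and collapsing each Kronecker delta gives the two claimed terms $A_{k\beta}(\bm{Y}\bm{B})_{\alpha l}$ and $(\bm{A}\bm{Y}^\top)_{k\alpha}B_{\beta l}$.

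For Part $(ii)$, the key tool is the derivative formula for the Moore--Penrose inverse, which for symmetric $\bm{S}$ reduces to
\begin{equation*}
\tfrac{\partial \bm{S}^+}{\partial Y_{\alpha\beta}}
=-\bm{S}^+\tfrac{\partial\bm{S}}{\partial Y_{\alpha\beta}}\bm{S}^+
+\bm{S}^+\bm{S}^+\tfrac{\partial\bm{S}}{\partial Y_{\alpha\beta}}(\bm{I}-\bm{S}\bm{S}^+)
+(\bm{I}-\bm{S}\bm{S}^+)\tfrac{\partial\bm{S}}{\partial Y_{\alpha\beta}}\bm{S}^+\bm{S}^+.
\end{equation*}
Sandwiching with $\bm{X}^\top(\cdot)\bm{X}$, I would apply Part $(i)$ to each of the three derivative expressions with the appropriate $\bm{A},\bm{B}$. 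The terms containing $(\bm{Y}\bm{B})_{\alpha l}$ with $\bm{B}$ involving $\bm{I}-\bm{S}\bm{S}^+$ vanish because $\bm{Y}(\bm{I}-\bm{S}\bm{S}^+)=\bm{0}$, and the symmetric counterparts with $(\bm{A}\bm{Y}^\top)_{k\alpha}$ and $\bm{A}$ involving $(\bm{I}-\bm{S}\bm{S}^+)$ vanish by $(\bm{I}-\bm{S}\bm{S}^+)\bm{Y}^\top=\bm{0}$. The surviving contributions from the first term combine into $-2(\bm{X}^\top\bm{S}^+\bm{Y}^\top)_{k\alpha}(\bm{S}^+\bm{X})_{\beta k}$ using $\bm{S}^+$ symmetric, and those from the last two combine into $+2(\bm{X}^\top\bm{S}^+\bm{S}^+\bm{Y}^\top)_{k\alpha}((\bm{I}-\bm{S}\bm{S}^+)\bm{X})_{\beta k}$. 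Part $(iii)$ then follows by summing Part $(ii)$ over $k$ and recognizing the resulting contractions as matrix products evaluated at $(\beta,\alpha)$.

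Part $(iv)$ is the most involved step and the main obstacle, not because any individual manipulation is hard but because book-keeping all nine terms must be done without error. I would apply the product rule
\begin{equation*}
\tfrac{\partial \bm{S}^+\bm{X}\bm{X}^\top\bm{S}\bm{S}^+}{\partial Y_{\alpha\beta}}
=\tfrac{\partial\bm{S}^+}{\partial Y_{\alpha\beta}}\bm{X}\bm{X}^\top\bm{S}\bm{S}^+
+\bm{S}^+\bm{X}\bm{X}^\top\tfrac{\partial\bm{S}}{\partial Y_{\alpha\beta}}\bm{S}^+
+\bm{S}^+\bm{X}\bm{X}^\top\bm{S}\tfrac{\partial\bm{S}^+}{\partial Y_{\alpha\beta}},
\end{equation*}
substitute the three-term derivative formula for $\bm{S}^+$ in each of the first and third pieces, and use Part $(i)$ for the middle piece. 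In contrast to Part $(ii)$, no vanishing occurs here because $\bm{I}-\bm{S}\bm{S}^+$ now appears on the outside of the differentiated $\bm{S}$, not adjacent to $\bm{Y}$ or $\bm{Y}^\top$; so all nine pairs of terms from Part $(i)$ survive. The final step is to pattern-match the nine terms against the claimed formula, repeatedly using symmetry $(\bm{S}^+)^\top=\bm{S}^+$ and $(\bm{S}\bm{S}^+)^\top=\bm{S}\bm{S}^+$ to convert quantities like $(\bm{A}\bm{Y}^\top)_{k\alpha}$ or $(\bm{Y}\bm{B})_{\alpha l}$ into the required row/column form.
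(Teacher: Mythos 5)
Your treatment of Parts (i)--(iii) follows the paper's route essentially verbatim: entrywise differentiation of $\bm{S}=\bm{Y}^{\top}\bm{Y}$ for Part (i), the Golub--Pereyra formula for $\partial\bm{S}^{+}/\partial Y_{\alpha\beta}$ sandwiched by $\bm{X}^{\top}(\cdot)\bm{X}$ for Part (ii) with the cancellations $\bm{Y}(\bm{I}-\bm{S}\bm{S}^{+})=\bm{0}$ and $(\bm{I}-\bm{S}\bm{S}^{+})\bm{Y}^{\top}=\bm{0}$ plus symmetry to merge the survivors into the factors of $2$, and summation over $k$ for Part (iii). That much is correct.

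Your plan for Part (iv), however, contains a concrete error: the claim that ``no vanishing occurs here'' and that ``all nine pairs of terms from Part (i) survive'' is false. The product rule gives three pieces; substituting the Moore--Penrose derivative formula in the first and third pieces produces seven sandwiches in total, hence fourteen terms after applying Part (i), while the stated identity has only nine terms. The missing terms vanish by exactly the same mechanism as in Part (ii): Part (i) inserts $\bm{Y}$ immediately to the left of $\bm{B}$ and $\bm{Y}^{\top}$ immediately to the right of $\bm{A}$, so whenever $\bm{I}-\bm{S}\bm{S}^{+}$ sits at that end of $\bm{A}$ or $\bm{B}$ the corresponding term dies. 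Concretely, one term is killed in the sandwich $(\bm{I}-\bm{S}\bm{S}^{+})\frac{\partial\bm{S}}{\partial Y_{\alpha\beta}}\bm{S}^{+}\bm{S}^{+}\bm{X}\bm{X}^{\top}\bm{S}\bm{S}^{+}$, one in $\bm{S}^{+}\bm{S}^{+}\frac{\partial\bm{S}}{\partial Y_{\alpha\beta}}(\bm{I}-\bm{S}\bm{S}^{+})\bm{X}\bm{X}^{\top}\bm{S}\bm{S}^{+}$, one in $\bm{S}^{+}\bm{X}\bm{X}^{\top}\bm{S}\bm{S}^{+}\bm{S}^{+}\frac{\partial\bm{S}}{\partial Y_{\alpha\beta}}(\bm{I}-\bm{S}\bm{S}^{+})$, and in addition the entire sandwich $\bm{S}^{+}\bm{X}\bm{X}^{\top}\bm{S}(\bm{I}-\bm{S}\bm{S}^{+})\frac{\partial\bm{S}}{\partial Y_{\alpha\beta}}\bm{S}^{+}\bm{S}^{+}$ is zero because $\bm{S}(\bm{I}-\bm{S}\bm{S}^{+})=\bm{0}$. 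You also need the simplifications $\bm{S}\bm{S}^{+}\bm{S}^{+}=\bm{S}^{+}$ and $\bm{S}\bm{S}^{+}\bm{Y}^{\top}=\bm{Y}^{\top}$ to bring the survivors into the stated form. Carried out as you describe, with every term retained, the bookkeeping cannot reproduce the nine-term right-hand side of Part (iv); the paper's proof performs precisely these vanishings before collecting terms.
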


\begin{proof}
    $(i)$ \quad By Proposition 1 of \cite{ChetelatWells},  we have \\$\frac{\partial \bm{S}}{\partial Y_{\alpha \beta}}=\delta_{\beta i}Y_{\alpha j}+\delta_{\beta j}Y_{\alpha i}$ where $\delta_{ij}$ is Kronecker delta. Therefore, 
     \begin{flalign*}
    &\left(\bm{A}\frac{\partial \bm{S}}{\partial Y_{\alpha \beta}}\bm{B}\right)_{kl} =\sum_{j}^{}{\left(\bm{A}\frac{\partial \bm{S}}{\partial Y_{\alpha \beta}}\right)_{kj}B_{jl}}=
     \sum_{j}^{}{\left(\sum_{i}^{}{A_{ki}\left(\frac{\partial \bm{S}}{\partial Y_{\alpha \beta}} \right)_{ij}}\right )B_{jl}}\\
     &=\sum_{j}^{}{\left(\sum_{i}^{}{A_{ki}\bigg\{\delta_{\beta i}Y_{\alpha j}+\delta_{\beta j}Y_{\alpha i} \bigg\}}\right )B_{jl}}
     =\sum_{j}^{}{\left ( \sum_{i}^{}{A_{ki}\delta_{\beta i}Y_{\alpha j}}+\sum_{i}^{}{A_{ki}\delta_{\beta j}Y_{\alpha i}}\right )B_{jl}}
     \\&
     =\sum_{j}^{}{\left ( A_{k\beta}Y_{\alpha j}+\sum_{i}^{}{A_{ki}\delta_{\beta j}Y_{\alpha i}}\right )B_{jl}}
     =\sum_{j}^{}{A_{k \beta}Y_{\alpha j}B_{jl}}+\sum_{j}^{}{\left ( \sum_{i}^{}{A_{ki}\delta_{\beta j}Y_{\alpha i}}\right)B_{jl}}\\
     &
     = A_{k \beta}\sum_{j}^{}{Y_{\alpha j}B_{jl}}+\sum_{i}^{}{A_{ki}Y_{\alpha i}\sum_{j}^{}{\delta_{\beta j}B_{jl}}}
     = A_{k \beta}(\bm{Y}\bm{B})_{\alpha l}+\sum_{i}^{}{A_{ki}Y_{\alpha i}B_{\beta l}}. 
&&
 \end{flalign*}
Then, $\left(\bm{A}\frac{\partial \bm{S}}{\partial Y_{\alpha \beta}}\bm{B}\right)_{kl}
     = A_{k \beta}(\bm{Y}\bm{B})_{\alpha l}+B_{\beta l}\ds{\sum_{i}^{p}}{A_{ki}\bm{Y}^{\top}_{i \alpha}}$, and then
   \begin{flalign*}
    &\left(\bm{A}\frac{\partial \bm{S}}{\partial Y_{\alpha \beta}}\bm{B}\right)_{kl}
     = A_{k \beta}(\bm{Y}\bm{B})_{\alpha l}+B_{\beta l}(\bm{A}\bm{Y}^{\top})_{k \alpha} 
     = A_{k\beta}(\bm{Y}\bm{B})_{\alpha l}+(\bm{A}\bm{Y}^{\top})_{k\alpha}B_{\beta l},&&
 \end{flalign*}
 this proves Part~$(i)$.\\
 $(ii)$ We have
 \begin{eqnarray*}
             \left(\frac{\partial}{\partial Y_{\alpha \beta}}(\bm{X}^{\top}\bm{S}^{+}\bm{X})\right)_{kk}=\left(\bm{X}^{\top}\left(\frac{\partial \bm{S}^{+}}{\partial Y_{\alpha \beta}}\right)\bm{X} \right)_{kk}.&&
        \end{eqnarray*}
        Further, from Theorem~4.3 in \cite{GolubPereyra},  we get
       \begin{flalign*}
            &\left(\bm{X}^{\top}\left(\frac{\partial \bm{S}^{+}}{\partial Y_{\alpha \beta}}\right)\bm{X} \right)_{kk} 
            =\left(\bm{X}^{\top}\left(-\bm{S}^{+}\frac{\partial \bm{S}}{\partial Y_{\alpha \beta}}\bm{S}^{+}+(\bm{I}-\bm{S}\bm{S}^{+})\frac{\partial \bm{S}}{\partial Y_{\alpha \beta}}\bm{S}^{+}\bm{S}^{+}+\bm{S}^{+}\bm{S}^{+}\frac{\partial \bm{S}}{\partial Y_{\alpha \beta}}(\bm{I}-\bm{S}\bm{S}^{+})\right)\bm{X}\right)_{kk}\\
            &= \left(-\bm{X}^{\top}\bm{S}^{+}\frac{\partial \bm{S}}{\partial Y_{\alpha \beta}}\bm{S}^{+}\bm{X}+\bm{X}^{\top}(\bm{I}-\bm{S}\bm{S}^{+})\frac{\partial \bm{S}}{\partial Y_{\alpha \beta}}\bm{S}^{+}\bm{S}^{+}\bm{X}+\bm{X}^{\top}\bm{S}^{+}\bm{S}^{+}\frac{\partial \bm{S}}{\partial Y_{\alpha \beta}}(\bm{I}-\bm{S}\bm{S}^{+})\bm{X}\right)_{kk}\\
            &=-\left(\bm{X}^{\top}\bm{S}^{+}\frac{\partial \bm{S}}{\partial Y_{\alpha \beta}}\bm{S}^{+}\bm{X}\right)_{kk}+\left(\bm{X}^{\top}(\bm{I}-\bm{S}\bm{S}^{+})\frac{\partial \bm{S}}{\partial Y_{\alpha \beta}}\bm{S}^{+}\bm{S}^{+}\bm{X}\right)_{kk}+\left(\bm{X}^{\top}\bm{S}^{+}\bm{S}^{+}\frac{\partial \bm{S}}{\partial Y_{\alpha \beta}}(\bm{I}-\bm{S}\bm{S}^{+})\bm{X}\right)_{kk}.&&
        \end{flalign*}
        Now, by Part $(i)$, we get
        \begin{flalign*}
            &\left(\frac{\partial \bm{X}^{\top}\bm{S}^{+}\bm{X}}{\partial Y_{\alpha \beta}} \right)_{kk}=-(\bm{X}^{\top}\bm{S}^{+})_{k \beta}(\bm{Y}\bm{S}^{+}\bm{X})_{\alpha k} - (\bm{X}^{\top}\bm{S}^{+}\bm{Y}^{\top})_{k \alpha}(\bm{S}^{+}\bm{X})_{\beta k}\\
            &+(\bm{X}^{\top}(\bm{I}-\bm{S}\bm{S}^{+}))_{k \beta}(\bm{Y}\bm{S}\bm{S}^{+}\bm{X})_{\alpha k} 
            + (\bm{X}^{\top}(\bm{I}-\bm{S}\bm{S}^{+})\bm{Y}^{\top})_{k \alpha}(\bm{S}\bm{S}^{+}\bm{X})_{\beta k}\\
            &+(\bm{X}^{\top}\bm{S}^{+}\bm{S}^{+})_{k \beta}(\bm{Y}(\bm{I}-\bm{S}\bm{S}^{+})\bm{X})_{\alpha k}
            + (\bm{X}^{\top}\bm{S}^{+}\bm{S}^{+}\bm{Y}^{\top})_{k \alpha}((\bm{I}-\bm{S}\bm{S}^{+})\bm{X})_{\beta k}.
            &&
        \end{flalign*}
       Further, we have 
        \begin{eqnarray*}
            (\bm{X}^{\top}\bm{S}^{+})_{k \beta}(\bm{Y}\bm{S}^{+}\bm{X})_{\alpha k}&=&(\bm{X}^{\top}\bm{S}^{+}\bm{Y}^{\top})_{k \alpha}(\bm{S}^{+}\bm{X})_{\beta k};\\
             (\bm{X}^{\top}(\bm{I}-\bm{S}\bm{S}^{+}))_{k \beta}(\bm{Y}\bm{S}\bm{S}^{+}\bm{X})_{\alpha k}&=&(\bm{X}^{\top}\bm{S}^{+}\bm{S}^{+}\bm{Y}^{\top})_{k \alpha}((\bm{I}-\bm{S}\bm{S}^{+})\bm{X})_{\beta k};\\
             \bm{Y}(\bm{I}-\bm{S}\bm{S}^{+})=(\bm{I}-\bm{S}\bm{S}^{+})\bm{Y}^{\top}&=&0.
        \end{eqnarray*}
        Then,
        \begin{flalign*}
             \left(\frac{\partial \bm{X}^{\top}\bm{S}^{+}\bm{X}}{\partial Y_{\alpha \beta}} \right)_{kk}=-2(\bm{X}^{\top}\bm{S}^{+}\bm{Y}^{\top})_{k \alpha}(\bm{S}^{+}\bm{X})_{\beta k}+2(\bm{X}^{\top}\bm{S}^{+}\bm{S}^{+}\bm{Y}^{\top})_{k \alpha}((\bm{I}-\bm{S}\bm{S}^{+})\bm{X})_{\beta k}&&
        \end{flalign*}
        this proves Part~$(ii)$.\\
        $(iii)$ \quad By   Part $(ii)$, we get
        \begin{flalign*}
            \frac{\partial F}{\partial Y_{\alpha \beta}}&=\frac{\partial }{\partial Y_{\alpha \beta}}\sum_{k}^{}{(\bm{X}^{\top}\bm{S}^{+}\bm{X})_{kk}}
            =\sum_{k}^{}{\frac{\partial (\bm{X}^{\top}\bm{S}^{+}\bm{X})_{kk}}{\partial Y_{\alpha \beta}}}
            \\&=\sum_{k}^{}{\bigg\{-2(\bm{X}^{\top}\bm{S}^{+}\bm{Y}^{\top})_{k \alpha}(\bm{S}^{+}\bm{X})_{\beta k}+2(\bm{X}^{\top}\bm{S}^{+}\bm{S}^{+}\bm{Y}^{\top})_{k \alpha}((\bm{I}-\bm{S}\bm{S}^{+})\bm{X})_{\beta k} \bigg\}}. 
&&
        \end{flalign*}
Therefore,
       \begin{flalign*}
            \frac{\partial F}{\partial Y_{\alpha \beta}}=-2\left(\bm{S}^{+}\bm{X}\bm{X}^{\top}\bm{S}^{+}\bm{Y}^{\top}\right)_{\beta \alpha}+2\left((\bm{I}-\bm{S}\bm{S}^{+})\bm{X}\bm{X}^{\top}\bm{S}^{+}\bm{S}^{+}\bm{Y}^{\top}\right)_{\beta \alpha},&&
        \end{flalign*}
        this proves Part~(iii).\\
        $(iv)$ We have
        \begin{flalign*}
    &\left( \frac{\partial \bm{S}^{+}\bm{X}\bm{X}^{\top}\bm{S}\bm{S}^{+}}{\partial Y_{\alpha \beta}}\right)_{kl} 
    =\left(\frac{\partial \bm{S}^{+}}{\partial Y_{\alpha \beta}}\bm{X}\bm{X}^{\top}\bm{S}\bm{S}^{+}\right)_{kl}+\left(\bm{S}^{+}\bm{X}\bm{X}^{\top}\frac{\partial \bm{S}}{\partial Y_{\alpha \beta}}\bm{S}^{+}\right)_{kl}+\left(\bm{S}^{+}\bm{X}\bm{X}^{\top}\bm{S}\frac{\partial \bm{S}^{+}}{\partial Y_{\alpha \beta}}\right)_{kl}\\
    &=\left(\left(-\bm{S}^{+}\frac{\partial \bm{S}}{\partial Y_{\alpha \beta}}\bm{S}^{+}+(\bm{I}-\bm{S}\bm{S}^{+})\frac{\partial \bm{S}}{\partial Y_{\alpha \beta}}\bm{S}^{+}\bm{S}^{+}+\bm{S}^{+}\bm{S}^{+}\frac{\partial \bm{S}}{\partial Y_{\alpha \beta}}(\bm{I}-\bm{S}\bm{S}^{+})\right)\bm{X}\bm{X}^{\top}\bm{S}\bm{S}^{+}\right)_{kl}
    +\left(\bm{S}^{+}\bm{X}\bm{X}^{\top}\frac{\partial \bm{S}}{\partial Y_{\alpha \beta}}\bm{S}^{+}\right)_{kl}\\&
    +\left(\bm{S}^{+}\bm{X}\bm{X}^{\top}\bm{S}(-\bm{S}^{+}\frac{\partial \bm{S}}{\partial Y_{\alpha \beta}}\bm{S}^{+}+(\bm{I}-\bm{S}\bm{S}^{+})\frac{\partial \bm{S}}{\partial Y_{\alpha \beta}}\bm{S}^{+}\bm{S}^{+}+\bm{S}^{+}\bm{S}^{+}\frac{\partial \bm{S}}{\partial Y_{\alpha \beta}}(\bm{I}-\bm{S}\bm{S}^{+}))\right)_{kl}.
         &&
\end{flalign*}
Then,
       \begin{flalign*}
    &\left( \frac{\partial \bm{S}^{+}\bm{X}\bm{X}^{\top}\bm{S}\bm{S}^{+}}{\partial Y_{\alpha \beta}}\right)_{kl} 
         =\left( -\bm{S}^{+}\frac{\partial \bm{S}}{\partial Y_{\alpha \beta}}\bm{S}^{+}\bm{X}\bm{X}^{\top}\bm{S}\bm{S}^{+}\right)_{kl}+\left((\bm{I}-\bm{S}\bm{S}^{+})\frac{\partial \bm{S}}{\partial Y_{\alpha \beta}}\bm{S}^{+}\bm{S}^{+}\bm{X}\bm{X}^{\top}\bm{S}\bm{S}^{+}\right)_{kl}\\&
         +\left(
         \bm{S}^{+}\bm{S}^{+}\frac{\partial \bm{S}}{\partial Y_{\alpha \beta}}(\bm{I}-\bm{S}\bm{S}^{+})\bm{X}\bm{X}^{\top}\bm{S}\bm{S}^{+}\right)_{kl}
         +
         \left(\bm{S}^{+}\bm{X}\bm{X}^{\top}\frac{\partial \bm{S}}{\partial Y_{\alpha \beta}}\bm{S}^{+}\right)_{kl} 
         -
         \left(\bm{S}^{+}\bm{X}\bm{X}^{\top}\bm{S}\bm{S}^{+}\frac{\partial \bm{S}}{\partial Y_{\alpha \beta}}\bm{S}^{+}\right)_{kl}\\
         &+\left(
         \bm{S}^{+}\bm{X}\bm{X}^{\top}\bm{S}(\bm{I}-\bm{S}\bm{S}^{+})\frac{\partial \bm{S}}{\partial Y_{\alpha \beta}}\bm{S}^{+}\bm{S}^{+}\right)_{kl}
         +\left(
         \bm{S}^{+}\bm{X}\bm{X}^{\top}\bm{S}\bm{S}^{+}\bm{S}^{+}\frac{\partial \bm{S}}{\partial Y_{\alpha \beta}}(\bm{I}-\bm{S}\bm{S}^{+})\right)_{kl}.&&
\end{flalign*}
 Now, by using Part $(i)$, we get

 \begin{eqnarray}\label{L1p1}
 \quad{ } \quad{ } \left( -\bm{S}^{+}\frac{\partial \bm{S}}{\partial Y_{\alpha \beta}}\bm{S}^{+}\bm{X}\bm{X}^{\top}\bm{S}\bm{S}^{+}\right)_{kl}
 = -\bm{S}^{+}_{k \beta}(\bm{Y}\bm{S}^{+}\bm{X}\bm{X}^{\top}\bm{S}\bm{S}^{+})_{\alpha l}-(\bm{S}^{+}\bm{Y}^{\top})_{k \alpha}(\bm{S}^{+}\bm{X}\bm{X}^{\top}\bm{S}\bm{S}^{+})_{\beta l}; 
\end{eqnarray}

 \begin{flalign}\label{L1p2}
 &\left( (\bm{I}-\bm{S}\bm{S}^{+})\frac{\partial \bm{S}}{\partial Y_{\alpha \beta}}\bm{S}^{+}\bm{S}^{+}\bm{X}\bm{X}^{\top}\bm{S}\bm{S}^{+}\right)_{kl}
 = (\bm{I}-\bm{S}\bm{S}^{+})_{k \beta}(\bm{Y}\bm{S}^{+}\bm{S}^{+}\bm{X}\bm{X}^{\top}\bm{S}\bm{S}^{+})_{\alpha l}\nonumber\\&+((\bm{I}-\bm{S}\bm{S}^{+})\bm{Y}^{\top})_{k \alpha}(\bm{S}^{+}\bm{S}^{+}\bm{X}\bm{X}^{\top}\bm{S}\bm{S}^{+})_{\beta l}
 = (\bm{I}-\bm{S}\bm{S}^{+})_{k \beta}(\bm{Y}\bm{S}^{+}\bm{S}^{+}\bm{X}\bm{X}^{\top}\bm{S}\bm{S}^{+})_{\alpha l}.&&
\end{flalign}
Since $\bm{Y}(\bm{I}-\bm{S}\bm{S}^{+})=0$, we get
 \begin{flalign}\label{L1p3}
 &\left( \bm{S}^{+}\bm{S}^{+}\frac{\partial \bm{S}}{\partial Y_{\alpha \beta}}(\bm{I}-\bm{S}\bm{S}^{+})\bm{X}\bm{X}^{\top}\bm{S}\bm{S}^{+}\right)_{kl}
 = (\bm{S}^{+}\bm{S}^{+})_{k \beta}(\bm{Y}(\bm{I}-\bm{S}\bm{S}^{+})\bm{X}\bm{X}^{\top}\bm{S}\bm{S}^{+})_{\alpha l}\nonumber\\
 &+(\bm{S}^{+}\bm{S}^{+}\bm{Y}^{\top})_{k \alpha}((\bm{I}-\bm{S}\bm{S}^{+})\bm{X}\bm{X}^{\top}\bm{S}\bm{S}^{+})_{\beta l}
 = (\bm{S}^{+}\bm{S}^{+}\bm{Y}^{\top})_{k \alpha}((\bm{I}-\bm{S}\bm{S}^{+})\bm{X}\bm{X}^{\top}\bm{S}\bm{S}^{+})_{\beta l}.&&
\end{flalign}

 \begin{flalign}\label{L1p4}
 &\left( \bm{S}^{+}\bm{X}\bm{X}^{\top}\frac{\partial \bm{S}}{\partial Y_{\alpha \beta}}\bm{S}^{+}\right)_{kl}= (\bm{S}^{+}\bm{X}\bm{X}^{\top})_{k \beta}(\bm{Y}\bm{S}^{+})_{\alpha l}+(\bm{S}^{+}\bm{X}\bm{X}^{\top}\bm{Y}^{\top})_{k \alpha}\bm{S}^{+}_{\beta l}.&&
\end{flalign}

 \begin{flalign*}
 &\left( -\bm{S}^{+}\bm{X}\bm{X}^{\top}\bm{S}\bm{S}^{+}\frac{\partial \bm{S}}{\partial Y_{\alpha \beta}}\bm{S}^{+}\right)_{kl}= -(\bm{S}^{+}\bm{X}\bm{X}^{\top}\bm{S}\bm{S}^{+})_{k \beta}(\bm{Y}\bm{S}^{+})_{\alpha l}(\bm{S}^{+}\bm{X}\bm{X}^{\top}\bm{S}\bm{S}^{+}\bm{Y}^{\top})_{k \alpha}\bm{S}^{+}_{\beta l}.&&
\end{flalign*}
 Since $\bm{S}\bm{S}^{+}\bm{Y}^{\top}=\bm{Y}^{\top}$, we get

 \begin{flalign}\label{L1p5}
 \left(  -\bm{S}^{+}\bm{X}\bm{X}^{\top}\bm{S}\bm{S}^{+}\frac{\partial \bm{S}}{\partial Y_{\alpha \beta}}\bm{S}^{+}\right)_{kl}=-(\bm{S}^{+}\bm{X}\bm{X}^{\top}\bm{S}\bm{S}^{+})_{k \beta}(\bm{Y}\bm{S}^{+})_{\alpha l}(\bm{S}^{+}\bm{X}\bm{X}^{\top}\bm{Y}^{\top})_{k \alpha}\bm{S}^{+}_{\beta l}.&&
\end{flalign}
Since $\bm{S}(\bm{I}-\bm{S}\bm{S}^{+})=\bm{S}^{+}(\bm{I}-\bm{S}\bm{S}^{+})=0$ we get
 \begin{flalign} \label{L1p6}
 \left(  \bm{S}^{+}\bm{X}\bm{X}^{\top}\bm{S}(\bm{I}-\bm{S}\bm{S}^{+})\frac{\partial \bm{S}}{\partial Y_{\alpha \beta}}\bm{S}^{+}\bm{S}^{+}\right)_{kl}=0,
 &&
\end{flalign}
and letting $\bm{A}=\bm{S}^{+}\bm{X}\bm{X}^{\top}\bm{S}^{+}$, $\bm{B}=(\bm{I}-\bm{S}\bm{S}^{+})$, we have
 \begin{flalign*}
 &\left( \bm{S}^{+}\bm{X}\bm{X}^{\top}\bm{S}\bm{S}^{+}\bm{S}^{+}\frac{\partial \bm{S}}{\partial Y_{\alpha \beta}}(\bm{I}-\bm{S}\bm{S}^{+})\right)_{kl}=\left( \bm{S}^{+}\bm{X}\bm{X}^{\top}\bm{S}^{+}\frac{\partial \bm{S}}{\partial Y_{\alpha \beta}}(\bm{I}-\bm{S}\bm{S}^{+})\right)_{kl} 
 =\left(  \bm{A}\frac{\partial \bm{S}}{\partial Y_{\alpha \beta}}\bm{B}\right)_{kl}\nonumber\\
 &=A_{k \beta}(\bm{Y}\bm{B})_{\alpha l}+(\bm{A}\bm{Y}^{\top})_{k \alpha}B_{\beta l}
 = (\bm{S}^{+}\bm{X}\bm{X}^{\top}\bm{S}^{+})_{k \beta}(\bm{Y}(\bm{I}-\bm{S}\bm{S}^{+}))_{\alpha l}+(\bm{S}^{+}\bm{X}\bm{X}^{\top}\bm{S}^{+}\bm{Y}^{\top})_{k \alpha}(\bm{I}-\bm{S}\bm{S}^{+})_{\beta l}\nonumber
&&
\end{flalign*}
and then,
 \begin{flalign}\label{L1p7}
 &\left( \bm{S}^{+}\bm{X}\bm{X}^{\top}\bm{S}\bm{S}^{+}\bm{S}^{+}\frac{\partial \bm{S}}{\partial Y_{\alpha \beta}}(\bm{I}-\bm{S}\bm{S}^{+})\right)_{kl}= (\bm{S}^{+}\bm{X}\bm{X}^{\top}\bm{S}^{+}\bm{Y}^{\top})_{k \alpha}(\bm{I}-\bm{S}\bm{S}^{+})_{\beta l}.&&
\end{flalign}
Therefore, from $\eqref{L1p1}-\eqref{L1p7}$, we get
\begin{flalign*}
 \left(  \frac{\partial \bm{S}^{+}\bm{X}\bm{X}^{\top}\bm{S}\bm{S}^{+}}{\partial Y_{\alpha \beta}}\right)_{kl}&=-\bm{S}^{+}_{k \beta}(\bm{Y}\bm{S}^{+}\bm{X}\bm{X}^{\top}\bm{S}\bm{S}^{+})_{\alpha l} - (\bm{S}^{+}\bm{Y}^{\top})_{k \alpha}(\bm{S}^{+}\bm{X}\bm{X}^{\top}\bm{S}\bm{S}^{+})_{\beta l}\\
     & +(\bm{I}-\bm{S}\bm{S}^{+})_{k \beta}(\bm{Y}\bm{S}^{+}\bm{S}\bm{X}\bm{X}^{\top}\bm{S}\bm{S}^{+})_{\alpha l}+(\bm{S}^{+}\bm{S}^{+}\bm{Y}^{\top})_{k \alpha}((\bm{I}-\bm{S}\bm{S}^{+})\bm{X}\bm{X}^{\top}\bm{S}\bm{S}^{+})_{\beta l}\\
     & +(\bm{S}^{+}\bm{X}\bm{X}^{\top})_{k \beta}(\bm{Y}\bm{S}^{+})_{\alpha l}+(\bm{S}^{+}\bm{X}\bm{X}^{\top}\bm{Y}^{\top})_{k \alpha}\bm{S}^{+}_{\beta l}-(\bm{S}^{+}\bm{X}\bm{X}^{\top}\bm{S}\bm{S}^{+})_{k \beta}(\bm{Y}\bm{S}^{+})_{\alpha l}\\
     & -(\bm{S}^{+}\bm{X}\bm{X}^{\top}\bm{S}\bm{S}^{+}\bm{Y}^{\top})_{k \alpha}\bm{S}^{+}_{\beta l}+(\bm{S}^{+}\bm{X}\bm{X}^{\top}\bm{S}^{+}\bm{Y}^{\top})_{k \alpha}(\bm{I}-\bm{S}\bm{S}^{+})_{\beta l},&&
\end{flalign*}
which completes the proof.
\end{proof}
By using \Cref{Lemma1}, we establish below a lemma which is crucial in deriving the main result of this paper. As intermediate step, we derive first the following proposition. To simplify some mathematical expressions, let
\begin{eqnarray*}
A^{\alpha,k,\beta}_{1}=-\bm{S}^{+}_{k \beta}(\bm{Y}\bm{S}^{+}\bm{X}\bm{X}^{\top}\bm{S}\bm{S}^{+})_{\alpha \beta};\quad{ } \quad{ }
A^{\alpha,k,\beta}_{2}=-(\bm{S}^{+}\bm{Y}^{\top})_{k \alpha}(\bm{S}^{+}\bm{X}\bm{X}^{\top}\bm{S}\bm{S}^{+})_{\beta \beta};\\
A^{\alpha,k,\beta}_{3}=(\bm{I}-\bm{S}\bm{S}^{+})_{k \beta}(\bm{Y}\bm{S}^{+}\bm{S}\bm{X}\bm{X}^{\top}\bm{S}\bm{S}^{+})_{\alpha \beta};\quad{ }
A^{\alpha,k,\beta}_{4}=(\bm{S}^{+}\bm{S}^{+}\bm{Y}^{\top})_{k \alpha}((\bm{I}-\bm{S}\bm{S}^{+})\bm{X}\bm{X}^{\top}\bm{S}\bm{S}^{+})_{\beta \beta}\\
A^{\alpha,k,\beta}_{5}=(\bm{S}^{+}\bm{X}\bm{X}^{\top})_{k \beta}(\bm{Y}\bm{S}^{+})_{\alpha \beta}; \,
A^{\alpha,k,\beta}_{6}=(\bm{S}^{+}\bm{X}\bm{X}^{\top}\bm{Y}^{\top})_{k \alpha}\bm{S}^{+}_{\beta \beta}; \,
A^{\alpha,k,\beta}_{7}=-(\bm{S}^{+}\bm{X}\bm{X}^{\top}\bm{S}\bm{S}^{+})_{k \beta}(\bm{Y}\bm{S}^{+})_{\alpha \beta};\\
A^{\alpha,k,\beta}_{8}=-(\bm{S}^{+}\bm{X}\bm{X}^{\top}\bm{S}\bm{S}^{+}\bm{Y}^{\top})_{k \alpha}\bm{S}^{+}_{\beta \beta}; \quad{ }
A^{\alpha,k,\beta}_{9}=(\bm{S}^{+}\bm{X}\bm{X}^{\top}\bm{S}^{+}\bm{Y}^{\top})_{k \alpha}(\bm{I}-\bm{S}\bm{S}^{+})_{\beta \beta}.
\end{eqnarray*}
\begin{proposition}\label{prop:A1_t_A9} Let $\bm{Y}$, $\bm{X}$, $F$ and $\bm{G}$ be as defined in \Cref{Theorem2}. Then, \\
$(i)$ \quad{ }
            $\ds{\sum_{\alpha,k,\beta}^{}}{Y_{\alpha k}A^{\alpha,k,\beta}_{1}}
            =-F$;  
\quad{ } $(ii)$\quad{ }
           $\ds{\sum_{\alpha,k,\beta}^{}}{Y_{\alpha k}A^{\alpha,k,\beta}_{2}}
            =-F\mathrm{tr}(\bm{S}^{+}\bm{S})$;\quad{ }  
$(iii)$\quad{ }
            $\ds{\sum_{\alpha,k,\beta}^{}}{Y_{\alpha k}A^{\alpha,k,\beta}_{3}}=0$; \\ 
$(iv)$\quad{ }
             $\ds{\sum_{\alpha,k,\beta}^{}}{Y_{\alpha k}A^{\alpha,k,\beta}_{4}}
            =0$; \quad{ } 
$(v)$  \quad{ }     
            $\ds{\sum_{\alpha,k,\beta}^{}}{Y_{\alpha k}A^{\alpha,k,\beta}_{5}}=F$;
$(vi)$ \quad{ }   
           $\ds{\sum_{\alpha,k,\beta}^{}}{Y_{\alpha k}A^{\alpha,k,\beta}_{6}}
            =\mathrm{tr}(\bm{S}^{+})\mathrm{tr}(\bm{S}^{+}\bm{X}\bm{X}^{\top}\bm{S})$;\\
$(vii)$   
            $\ds{\sum_{\alpha,k,\beta}^{}}{Y_{\alpha k}A^{\alpha,k,\beta}_{7}}=-F$; \quad{ } 
$(viii)$  \quad{ } 
           $\ds{\sum_{\alpha,k,\beta}^{}}{Y_{\alpha k}A^{\alpha,k,\beta}_{8}}
             =-\mathrm{tr}(\bm{S}^{+})\mathrm{tr}(\bm{S}^{+}\bm{X}\bm{X}^{\top}\bm{S})$;\\ 
$(ix)$   
            $\ds{\sum_{\alpha,k,\beta}^{}}{Y_{\alpha k}A^{\alpha,k,\beta}_{9}}=(p-\mathrm{tr}(\bm{S}\bm{S}^{+}))F$. 
\end{proposition}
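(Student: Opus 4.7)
The plan is to treat all nine sums in a uniform way: after summing over the index $\alpha$ against $Y_{\alpha k}$ (respectively after using the factorization already present in each $A_j^{\alpha,k,\beta}$), each expression collapses to a trace that can be simplified by a short list of Moore--Penrose identities. The key algebraic facts I would invoke throughout are: $\bm{S}=\bm{Y}^{\top}\bm{Y}$, the pseudo-inverse identities $\bm{S}^{+}\bm{S}\bm{S}^{+}=\bm{S}^{+}$ and $\bm{S}\bm{S}^{+}\bm{S}=\bm{S}$, the projector identity $\bm{Y}(\bm{I}-\bm{S}\bm{S}^{+})=\bm{0}$ (equivalently $\bm{Y}\bm{S}\bm{S}^{+}=\bm{Y}$), and the cyclic invariance of the trace together with the basic identity $\sum_{\alpha,\beta}M_{\alpha\beta}N_{\alpha\beta}=\mathrm{tr}(\bm{M}^{\top}\bm{N})$.

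The identity $\bm{Y}(\bm{I}-\bm{S}\bm{S}^{+})=\bm{0}$ deserves a one-line justification, since it is the single nontrivial fact: the squared Frobenius norm of $\bm{Y}(\bm{I}-\bm{S}\bm{S}^{+})$ equals $\mathrm{tr}\bigl((\bm{I}-\bm{S}\bm{S}^{+})\bm{S}(\bm{I}-\bm{S}\bm{S}^{+})\bigr)=\mathrm{tr}(\bm{S})-\mathrm{tr}(\bm{S}\bm{S}^{+}\bm{S})=0$, which vanishes by $\bm{S}\bm{S}^{+}\bm{S}=\bm{S}$. This is the fact behind Parts $(iii)$ and $(iv)$, where the factor containing $(\bm{I}-\bm{S}\bm{S}^{+})$ is multiplied on the left by $\bm{Y}$ or on the right by $\bm{S}\bm{S}^{+}$ and therefore kills the whole sum.

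Parts $(i),(v),(vii)$ have the common pattern where both index factors contain $\bm{Y}$: after pushing the $Y_{\alpha k}$ inside and summing on $\alpha$, one recognizes $\bm{Y}^{\top}\bm{Y}=\bm{S}$ appearing in the middle of a trace, and the Moore--Penrose identity $\bm{S}^{+}\bm{S}\bm{S}^{+}=\bm{S}^{+}$ collapses the expression to $\pm F=\pm\mathrm{tr}(\bm{X}^{\top}\bm{S}^{+}\bm{X})$. Parts $(ii),(vi),(viii),(ix)$ factor into a product of two independent sums because the $\beta$-dependent factor is a diagonal entry $\bm{S}^{+}_{\beta\beta}$ or $(\bm{S}^{+}\bm{X}\bm{X}^{\top}\bm{S}\bm{S}^{+})_{\beta\beta}$ or $(\bm{I}-\bm{S}\bm{S}^{+})_{\beta\beta}$; summing over $\beta$ produces a clean trace $\mathrm{tr}(\bm{S}^{+})$, $F$, or $p-\mathrm{tr}(\bm{S}\bm{S}^{+})$, and summing the remaining $Y_{\alpha k}(\cdot)_{k\alpha}$ piece against $\bm{Y}$ again introduces $\bm{S}=\bm{Y}^{\top}\bm{Y}$.

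No serious obstacle arises: the computations are a disciplined bookkeeping exercise. The part requiring the most care is probably Part~$(ix)$, where both the cancellations $\bm{S}\bm{S}^{+}\bm{X}\bm{X}^{\top}\bm{S}^{+}\bm{S}\bm{S}^{+}=\bm{X}\bm{X}^{\top}\bm{S}^{+}$ (after cyclic shifts) and the $(\bm{I}-\bm{S}\bm{S}^{+})_{\beta\beta}$ diagonal sum must be handled in the same line; but once the two identities $\bm{S}^{+}\bm{S}\bm{S}^{+}=\bm{S}^{+}$ and $\mathrm{tr}(\bm{I}-\bm{S}\bm{S}^{+})=p-\mathrm{tr}(\bm{S}\bm{S}^{+})$ are invoked, the stated value $(p-\mathrm{tr}(\bm{S}\bm{S}^{+}))F$ drops out immediately.
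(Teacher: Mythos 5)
Your proposal is correct and follows essentially the same route as the paper's proof: index bookkeeping to rewrite each sum as a trace, then cyclic invariance together with $\bm{Y}^{\top}\bm{Y}=\bm{S}$, $\bm{S}^{+}\bm{S}\bm{S}^{+}=\bm{S}^{+}$, $\bm{Y}(\bm{I}-\bm{S}\bm{S}^{+})=\bm{0}$, and $\mathrm{tr}(\bm{I}-\bm{S}\bm{S}^{+})=p-\mathrm{tr}(\bm{S}\bm{S}^{+})$. The only (harmless) differences are cosmetic: you justify $\bm{Y}(\bm{I}-\bm{S}\bm{S}^{+})=\bm{0}$ explicitly, which the paper simply uses, and you sometimes contract the indices in a different order than the paper does.
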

\begin{proof}
        $(i)$ \quad{ } We have $\ds{\sum_{\alpha,k,\beta}^{}}{Y_{\alpha k}A^{\alpha,k,\beta}_{1}}=-\sum_{\alpha,k,\beta}^{}{Y_{\alpha k}\bm{S}^{+}_{k \beta}(\bm{Y}\bm{S}^{+}\bm{X}\bm{X}^{\top}\bm{S}\bm{S}^{+})_{\alpha \beta}}$. Then,
        \begin{flalign*}
            \nonumber &\sum_{\alpha,k,\beta}^{}{Y_{\alpha k}A^{\alpha,k,\beta}_{1}} 
            =-\sum_{\alpha,k}^{}{Y_{\alpha k}\sum_{\beta}^{}{(\bm{Y}\bm{S}^{+}\bm{X}\bm{X}^{\top}\bm{S}\bm{S}^{+})_{\alpha \beta}\bm{S}^{+}_{\beta k}}} 
            =-\sum_{\alpha,k}^{}{Y_{\alpha k}(\bm{Y}\bm{S}^{+}\bm{X}\bm{X}^{\top}\bm{S}\bm{S}^{+}\bm{S}^{+})_{\alpha k}}\\
            &\nonumber
            =-\sum_{\alpha,k}^{}{(\bm{Y}\bm{S}^{+}\bm{X}\bm{X}^{\top}\bm{S}^{+})_{\alpha k}\bm{Y}^{\top}_{k \alpha }} 
            =-\sum_{\alpha}^{}{(\bm{Y}\bm{S}^{+}\bm{X}\bm{X}^{\top}\bm{S}^{+}\bm{Y}^{\top})_{\alpha \alpha }} 
            =-\mathrm{tr}(\bm{Y}\bm{S}^{+}\bm{X}\bm{X}^{\top}\bm{S}^{+}\bm{Y}^{\top}).
            && 
        \end{flalign*}
        This gives $\ds{\sum_{\alpha,k,\beta}^{}}{Y_{\alpha k}A^{\alpha,k,\beta}_{1}}
            =-\mathrm{tr}(\bm{X}^{\top}\bm{S}^{+}\bm{Y}^{\top}\bm{Y}\bm{S}^{+}\bm{X})$ and then,
        \begin{eqnarray}
            \sum_{\alpha,k,\beta}^{}{Y_{\alpha k}A^{\alpha,k,\beta}_{1}}
            =-\mathrm{tr}(\bm{X}^{\top}\bm{S}^{+}\bm{S}\bm{S}^{+}\bm{X})
            =-\mathrm{tr}(\bm{X}^{\top}\bm{S}^{+}\bm{X})
            =-F. 
            \label{a1}
        \end{eqnarray}
 $(ii)$ \quad{ } We have
        \begin{flalign*}
            \nonumber &\sum_{\alpha,k,\beta}^{}{Y_{\alpha k}A^{\alpha,k,\beta}_{2}}=-\sum_{\alpha,k,\beta}^{}{Y_{\alpha k}(\bm{S}^{+}\bm{Y}^{\top})_{k \alpha}(\bm{S}^{+}\bm{X}\bm{X}^{\top}\bm{S}\bm{S}^{+})_{\beta \beta}} 
            =-\sum_{\alpha,k}^{}{Y_{\alpha k}(\bm{S}^{+}\bm{Y}^{\top})_{k \alpha}\sum_{\beta}^{}{(\bm{S}^{+}\bm{X}\bm{X}^{\top}\bm{S}\bm{S}^{+})_{\beta \beta}}}\\
            &\nonumber=-\sum_{\alpha,k}^{}{Y_{\alpha k}(\bm{S}^{+}\bm{Y}^{\top})_{k \alpha}\mathrm{tr}(\bm{S}^{+}\bm{X}\bm{X}^{\top}\bm{S}\bm{S}^{+})} 
            =-\mathrm{tr}(\bm{X}^{\top}\bm{S}\bm{S}^{+}\bm{S}^{+}\bm{X})\sum_{\alpha}^{}{(\bm{Y}\bm{S}^{+}\bm{Y}^{\top})_{\alpha \alpha}}.
        \end{flalign*}
        Hence,
        \begin{flalign}
           \sum_{\alpha,k,\beta}^{}{Y_{\alpha k}A^{\alpha,k,\beta}_{2}}=-\mathrm{tr}(\bm{X}^{\top}\bm{S}^{+}\bm{X})\mathrm{tr}(\bm{Y}\bm{S}^{+}\bm{Y}^{\top}) 
            =-F\mathrm{tr}(\bm{S}^{+}\bm{Y}^{\top}Y) 
            =-F\mathrm{tr}(\bm{S}^{+}\bm{S}).&&\label{a2}
        \end{flalign}
 $(iii)$ \quad{ } We have $\ds{\sum_{\alpha,k,\beta}^{}}{Y_{\alpha k}A^{\alpha,k,\beta}_{3}}=\ds{\sum_{\alpha,k,\beta}^{}}{Y_{\alpha k}(\bm{I}-\bm{S}\bm{S}^{+})_{k \beta}(\bm{Y}\bm{S}^{+}\bm{S}\bm{X}\bm{X}^{\top}\bm{S}\bm{S}^{+})_{\alpha \beta}}$ then,
            \begin{flalign*}
            \nonumber & \sum_{\alpha,k,\beta}^{}{Y_{\alpha k}A^{\alpha,k,\beta}_{3}}
            =\sum_{\alpha,k}^{}{Y_{\alpha k}\sum_{\beta}^{}{(\bm{Y}\bm{S}^{+}\bm{S}\bm{X}\bm{X}^{\top}\bm{S}\bm{S}^{+})_{\alpha \beta}(\bm{I}-\bm{S}\bm{S}^{+})_{\beta k}}}.
&&
        \end{flalign*}
        Then
                   \begin{flalign}
            \sum_{\alpha,k,\beta}^{}{Y_{\alpha k}A^{\alpha,k,\beta}_{3}}
            &=\sum_{\alpha,k}^{}{Y_{\alpha k}(\bm{Y}\bm{S}^{+}\bm{S}\bm{X}\bm{X}^{\top}\bm{S}\bm{S}^{+}(\bm{I}-\bm{S}\bm{S}^{+}))_{\alpha k}}=0.&&\label{a3}
        \end{flalign}
 $(iv)$ \quad{ } We have $\ds{\sum_{\alpha,k,\beta}^{}}{Y_{\alpha k}A^{\alpha,k,\beta}_{4}}=\ds{\sum_{\alpha,k,\beta}^{}}{Y_{\alpha k}(\bm{S}^{+}\bm{S}^{+}\bm{Y}^{\top})_{k \alpha}((\bm{I}-\bm{S}\bm{S}^{+})\bm{X}\bm{X}^{\top}\bm{S}\bm{S}^{+})_{\beta \beta}}$. Then,
            \begin{eqnarray*}
            \nonumber &\ds{\sum_{\alpha,k,\beta}^{}}{Y_{\alpha k}A^{\alpha,k,\beta}_{4}}
            =\ds{\sum_{\alpha,k}^{}}{Y_{\alpha k}(\bm{S}^{+}\bm{S}^{+}\bm{Y}^{\top})_{k \alpha}\ds{\sum_{\beta}^{}}{((\bm{I}-\bm{S}\bm{S}^{+})\bm{X}\bm{X}^{\top}\bm{S}\bm{S}^{+})_{\beta \beta}}}\\
            &\nonumber
            =\ds{\sum_{\alpha,k}^{}}{Y_{\alpha k}(\bm{S}^{+}\bm{S}^{+}\bm{Y}^{\top})_{k \alpha}\textnormal{tr}((\bm{I}-\bm{S}\bm{S}^{+})\bm{X}\bm{X}^{\top}\bm{S}\bm{S}^{+})}. 
        \end{eqnarray*}
        Hence,
        \begin{flalign}
             \sum_{\alpha,k,\beta}^{}{Y_{\alpha k}A^{\alpha,k,\beta}_{4}}
            &
            =\mathrm{tr}(\bm{S}\bm{S}^{+}(\bm{I}-\bm{S}\bm{S}^{+})\bm{X}\bm{X}^{\top})\sum_{\alpha,k}^{}{Y_{\alpha k}(\bm{S}^{+}\bm{S}^{+}\bm{Y}^{\top})_{k \alpha}}=0.&&\label{a4}
        \end{flalign}
 $(v)$  We have, 
            \begin{flalign*}
            \nonumber \sum_{\alpha,k,\beta}^{}{Y_{\alpha k}A^{\alpha,k,\beta}_{5}}&=\sum_{\alpha,k,\beta}^{}{Y_{\alpha k}(\bm{S}^{+}\bm{X}\bm{X}^{\top})_{k \beta}(\bm{Y}\bm{S}^{+})_{\alpha \beta}}
            =\sum_{\alpha,k}^{}{Y_{\alpha k}\sum_{\beta}^{}{(\bm{S}^{+}\bm{X}\bm{X}^{\top})_{k \beta}(\bm{S}^{+}\bm{Y}^{\top})_{\beta \alpha}}}\\
            &\nonumber=\sum_{\alpha,k}^{}{Y_{\alpha k}(\bm{S}^{+}\bm{X}\bm{X}^{\top}\bm{S}^{+}\bm{Y}^{\top})_{k \alpha}}
            =\sum_{\alpha}^{}{(\bm{Y}\bm{S}^{+}\bm{X}\bm{X}^{\top}\bm{S}^{+}\bm{Y}^{\top})_{\alpha \alpha}}
=\mathrm{tr}(\bm{Y}\bm{S}^{+}\bm{X}\bm{X}^{\top}\bm{S}^{+}\bm{Y}^{\top}). 
            &&
        \end{flalign*}
        Then,
                  \begin{flalign}
            \sum_{\alpha,k,\beta}^{}{Y_{\alpha k}A^{\alpha,k,\beta}_{5}}=\mathrm{tr}(\bm{X}^{\top}\bm{S}^{+}\bm{Y}^{\top}\bm{Y}\bm{S}^{+}\bm{X})
            =\mathrm{tr}(\bm{X}^{\top}\bm{S}^{+}\bm{S}\bm{S}^{+}\bm{X}) =\mathrm{tr}(\bm{X}^{\top}\bm{S}^{+}\bm{X})=F.
            &&
            \label{a5}
        \end{flalign}
 $(vi)$ We have 
            \begin{flalign}
            \nonumber &\sum_{\alpha,k,\beta}^{}{Y_{\alpha k}A^{\alpha,k,\beta}_{6}}=\sum_{\alpha,k,\beta}^{}{Y_{\alpha k}(\bm{S}^{+}\bm{X}\bm{X}^{\top}\bm{Y}^{\top})_{k \alpha}\bm{S}^{+}_{\beta \beta}}
            =\sum_{\alpha,k}^{}{Y_{\alpha k}(\bm{S}^{+}\bm{X}\bm{X}^{\top}\bm{Y}^{\top})_{k \alpha}\sum_{\beta}^{}{\bm{S}^{+}_{\beta \beta}}}\\
            &\nonumber=\sum_{\alpha,k}^{}{Y_{\alpha k}(\bm{S}^{+}\bm{X}\bm{X}^{\top}\bm{Y}^{\top})_{k \alpha}\textnormal{tr}(\bm{S}^{+})}
            =\mathrm{tr}(\bm{S}^{+})\sum_{\alpha,k}^{}{Y_{\alpha k}(\bm{S}^{+}\bm{X}\bm{X}^{\top}\bm{Y}^{\top})_{k \alpha}}
            =\mathrm{tr}(\bm{S}^{+})\sum_{\alpha}^{}{(\bm{Y}\bm{S}^{+}\bm{X}\bm{X}^{\top}\bm{Y}^{\top})_{\alpha \alpha}}.
            &&
        \end{flalign}
        Then, $\ds{\sum_{\alpha,k,\beta}^{}}{Y_{\alpha k}A^{\alpha,k,\beta}_{6}}=\mathrm{tr}(\bm{S}^{+})\mathrm{tr}(\bm{Y}\bm{S}^{+}\bm{X}\bm{X}^{\top}\bm{Y}^{\top})$, and then,
                 \begin{flalign}
           \sum_{\alpha,k,\beta}^{}{Y_{\alpha k}A^{\alpha,k,\beta}_{6}} 
            =\mathrm{tr}(\bm{S}^{+})\mathrm{tr}(\bm{S}^{+}\bm{X}\bm{X}^{\top}\bm{Y}^{\top}Y)
            =\mathrm{tr}(\bm{S}^{+})\mathrm{tr}(\bm{S}^{+}\bm{X}\bm{X}^{\top}\bm{S}).
            &&
            \label{a6}
        \end{flalign}
(vii) We have
            \begin{flalign}
            \nonumber &\sum_{\alpha,k,\beta}^{}{Y_{\alpha k}A^{\alpha,k,\beta}_{7}}=-\sum_{\alpha,k,\beta}^{}{Y_{\alpha k}(\bm{S}^{+}\bm{X}\bm{X}^{\top}\bm{S}\bm{S}^{+})_{k \beta}(\bm{Y}\bm{S}^{+})_{\alpha \beta}}
            =-\sum_{\alpha,k}^{}{Y_{\alpha k}\sum_{\beta}^{}{(\bm{S}^{+}\bm{X}\bm{X}^{\top}\bm{S}\bm{S}^{+})_{k \beta}(\bm{S}^{+}\bm{Y}^{\top})_{\beta \alpha}}}\\
            &\nonumber=-\sum_{\alpha,k}^{}{Y_{\alpha k}(\bm{S}^{+}\bm{X}\bm{X}^{\top}\bm{S}\bm{S}^{+}\bm{S}^{+}\bm{Y}^{\top})_{k \alpha}}
            =-\sum_{\alpha,k}^{}{Y_{\alpha k}(\bm{S}^{+}\bm{X}\bm{X}^{\top}\bm{S}^{+}\bm{Y}^{\top})_{k \alpha}}
            =-\sum_{\alpha}^{}{(\bm{Y}\bm{S}^{+}\bm{X}\bm{X}^{\top}\bm{S}^{+}\bm{Y}^{\top})_{\alpha \alpha}}.
            &&
        \end{flalign}
 Then,
             \begin{flalign}
            \sum_{\alpha,k,\beta}^{}{Y_{\alpha k}A^{\alpha,k,\beta}_{7}}
            =-\mathrm{tr}(\bm{Y}\bm{S}^{+}\bm{X}\bm{X}^{\top}\bm{S}^{+}\bm{Y}^{\top})
            =-\mathrm{tr}(\bm{X}^{\top}\bm{S}^{+}\bm{S}\bm{S}^{+}\bm{X}) 
            =-\mathrm{tr}(\bm{X}^{\top}\bm{S}^{+}\bm{X})=-F.&& \label{a7}
        \end{flalign}
 $(viii)$ We have
        \begin{flalign}
            \nonumber & \sum_{\alpha,k,\beta}^{}{Y_{\alpha k}A^{\alpha,k,\beta}_{8}}=-\sum_{\alpha,k,\beta}^{}{Y_{\alpha k}(\bm{S}^{+}\bm{X}\bm{X}^{\top}\bm{S}\bm{S}^{+}\bm{Y}^{\top})_{k \alpha}\bm{S}^{+}_{\beta \beta}} 
            =-\sum_{\alpha,k}^{}{Y_{\alpha k}(\bm{S}^{+}\bm{X}\bm{X}^{\top}\bm{S}\bm{S}^{+}\bm{Y}^{\top})_{k \alpha}\sum_{\beta}^{}{\bm{S}^{+}_{\beta \beta}}}\\
            &\nonumber=-\sum_{\alpha,k}^{}{Y_{\alpha k}(\bm{S}^{+}\bm{X}\bm{X}^{\top}\bm{S}\bm{S}^{+}\bm{Y}^{\top})_{k \alpha}\textnormal{tr}(\bm{S}^{+})}
            =-\mathrm{tr}(\bm{S}^{+})\sum_{\alpha}^{}{(\bm{Y}\bm{S}^{+}\bm{X}\bm{X}^{\top}\bm{S}\bm{S}^{+}\bm{Y}^{\top})_{\alpha \alpha}} \\
            &\nonumber
            =-\mathrm{tr}(\bm{S}^{+})\mathrm{tr}(\bm{Y}\bm{S}^{+}\bm{X}\bm{X}^{\top}\bm{S}\bm{S}^{+}\bm{Y}^{\top}) 
           =-\mathrm{tr}(\bm{S}^{+})\mathrm{tr}(\bm{S}^{+}\bm{X}\bm{X}^{\top}\bm{S}\bm{S}^{+}\bm{Y}^{\top}Y).
            &&
        \end{flalign}
        Therefore,
         \begin{eqnarray}
           \sum_{\alpha,k,\beta}^{}{Y_{\alpha k}A^{\alpha,k,\beta}_{8}}
            =-\mathrm{tr}(\bm{S}^{+})\mathrm{tr}(\bm{S}^{+}\bm{X}\bm{X}^{\top}\bm{S}).
            \label{a8}
        \end{eqnarray}
 (ix) \quad{ } We have $\ds{\sum_{\alpha,k,\beta}^{}}{Y_{\alpha k}A^{\alpha,k,\beta}_{9}}=\ds{\sum_{\alpha,k,\beta}^{}}{Y_{\alpha k}(\bm{S}^{+}\bm{X}\bm{X}^{\top}\bm{S}^{+}\bm{Y}^{\top})_{k \alpha}(\bm{I}-\bm{S}\bm{S}^{+})_{\beta \beta}}$ and then,
            \begin{flalign*}
            \nonumber&\sum_{\alpha,k,\beta}^{}{Y_{\alpha k}A^{\alpha,k,\beta}_{9}} 
            =\sum_{\alpha,k}^{}{Y_{\alpha k}(\bm{S}^{+}\bm{X}\bm{X}^{\top}\bm{S}^{+}\bm{Y}^{\top})_{k \alpha}\sum_{\beta}^{}{(\bm{I}-\bm{S}\bm{S}^{+})_{\beta \beta}}}\\
            &\nonumber=\sum_{\alpha,k}^{}{Y_{\alpha k}(\bm{S}^{+}\bm{X}\bm{X}^{\top}\bm{S}^{+}\bm{Y}^{\top})_{k \alpha}\mathrm{tr}(\bm{I}-\bm{S}\bm{S}^{+})}
            =\sum_{\alpha,k}^{}{Y_{\alpha k}(\bm{S}^{+}\bm{X}\bm{X}^{\top}\bm{S}^{+}\bm{Y}^{\top})_{k \alpha}(p-\mathrm{tr}(\bm{S}\bm{S}^{+}))}\\
            &\nonumber=(p-\mathrm{tr}(\bm{S}\bm{S}^{+}))\sum_{\alpha}^{}{(\bm{Y}\bm{S}^{+}\bm{X}\bm{X}^{\top}\bm{S}^{+}\bm{Y}^{\top})_{\alpha \alpha}}
            =(p-\mathrm{tr}(\bm{S}\bm{S}^{+}))\mathrm{tr}(\bm{Y}\bm{S}^{+}\bm{X}\bm{X}^{\top}\bm{S}^{+}\bm{Y}^{\top}).
            &&
        \end{flalign*}
        Then
          \begin{flalign*}
            \nonumber&\sum_{\alpha,k,\beta}^{}{Y_{\alpha k}A^{\alpha,k,\beta}_{9}}
            =(p-\mathrm{tr}(\bm{S}\bm{S}^{+}))\mathrm{tr}(\bm{X}^{\top}\bm{S}^{+}\bm{Y}^{\top}\bm{Y}\bm{S}^{+}\bm{X})
            =(p-\mathrm{tr}(\bm{S}\bm{S}^{+}))\mathrm{tr}(\bm{X}^{\top}\bm{S}^{+}\bm{S}\bm{S}^{+}\bm{X}),
            &&
        \end{flalign*}
        and then,
            \begin{flalign}
            \sum_{\alpha,k,\beta}^{}{Y_{\alpha k}A^{\alpha,k,\beta}_{9}}
            =(p-\mathrm{tr}(\bm{S}\bm{S}^{+}))\mathrm{tr}(\bm{X}^{\top}\bm{S}^{+}\bm{X})=(p-\mathrm{tr}(\bm{S}\bm{S}^{+}))F,&&\label{a9}
        \end{flalign}
 this completes the proof.              
\end{proof}
By combining \Cref{Lemma1} and \Cref{prop:A1_t_A9}, we derive the following result. 
\begin{lemma}\label{Lemma2}
     Let $\bm{Y}$, $\bm{X}$, $F$ and $\bm{G}$ be as defined in \Cref{Theorem2}.
     Then
        \begin{flalign*}
        (i) \quad \frac{\partial G_{k l}}{\partial Y_{\alpha \beta}}&=\frac{2r(F)r^{\prime}(F)}{F^2}\left(\frac{\partial F}{\partial Y_{\alpha \beta}}\right)(\bm{S}^{+}\bm{X}\bm{X}^{\top}\bm{S}\bm{S}^{+})_{k l} 
        -\frac{2r^2(F)}{F^{3}}\left(\frac{\partial F}{\partial Y_{\alpha \beta}}\right)(\bm{S}^{+}\bm{X}\bm{X}^{\top}\bm{S}\bm{S}^{+})_{k l}\\
        &+\frac{r^2(F)}{F^2}\frac{\partial}{\partial Y_{\alpha \beta}}(\bm{S}^{+}\bm{X}\bm{X}^{\top}\bm{S}\bm{S}^{+})_{k l};&&
    \end{flalign*}
        \begin{flalign*}
        (ii)\quad \sum_{\alpha,k,\beta}^{}{Y_{\alpha k}\frac{\partial }{\partial Y_{\alpha \beta}}(\bm{S}\bm{S}^{+}\bm{X}\bm{X}^{\top}\bm{S}^{+})_{\beta k}}=F\left[p-2\mathrm{tr}(\bm{S}\bm{S}^{+})-1\right];&&
    \end{flalign*}
    \begin{flalign*}
        (iii) \quad \sum_{\alpha,\beta,k}^{}{Y_{\alpha k}\frac{\partial G_{k \beta}}{\partial Y_{\alpha \beta}}}
           =-\frac{4r(F)r^{\prime}(F)}{F^2}\mathrm{tr}\left((\bm{X}^{\top}\bm{S}^{+}\bm{X})^2\right)
           +\frac{r^2(F)}{F}\left(\frac{4\mathrm{tr}\left((\bm{X}^{\top}\bm{S}^{+}\bm{X})^2\right)}{F^2}+p-2\mathrm{tr}(\bm{S}\bm{S}^{+})-1\right).&&
    \end{flalign*}
\end{lemma}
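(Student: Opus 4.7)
The plan is to treat the three parts sequentially, with Part~$(iii)$ being the destination and the earlier parts serving as the two algebraic ingredients.

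For Part~$(i)$, I would simply expand $G_{kl}=\bigl(r^{2}(F)/F^{2}\bigr)\,(\bm{S}^{+}\bm{X}\bm{X}^{\top}\bm{S}\bm{S}^{+})_{kl}$ and apply the product rule together with the chain rule in $Y_{\alpha\beta}$. The scalar factor $r^{2}(F)/F^{2}$ yields, by chain rule, the two terms $2r(F)r^{\prime}(F)/F^{2}$ and $-2r^{2}(F)/F^{3}$ multiplied by $\partial F/\partial Y_{\alpha\beta}$, and the matrix factor contributes the remaining derivative $\partial(\bm{S}^{+}\bm{X}\bm{X}^{\top}\bm{S}\bm{S}^{+})_{kl}/\partial Y_{\alpha\beta}$; this is just bookkeeping.

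For Part~$(ii)$, the strategy is to use the transpose identity $(\bm{S}\bm{S}^{+}\bm{X}\bm{X}^{\top}\bm{S}^{+})_{\beta k}=(\bm{S}^{+}\bm{X}\bm{X}^{\top}\bm{S}\bm{S}^{+})_{k\beta}$ to reduce the claim to
\[
\sum_{\alpha,k,\beta}Y_{\alpha k}\,\frac{\partial (\bm{S}^{+}\bm{X}\bm{X}^{\top}\bm{S}\bm{S}^{+})_{k\beta}}{\partial Y_{\alpha\beta}}=F\bigl[p-2\mathrm{tr}(\bm{S}\bm{S}^{+})-1\bigr].
\]
Then I would invoke Part~$(iv)$ of \Cref{Lemma1} with $l=\beta$, which decomposes the derivative into exactly the nine summands $A_{1}^{\alpha,k,\beta},\ldots,A_{9}^{\alpha,k,\beta}$, and conclude by adding the nine contractions $\sum_{\alpha,k,\beta}Y_{\alpha k}A_{j}^{\alpha,k,\beta}$ evaluated in \Cref{prop:A1_t_A9}: the result is $-F-F\mathrm{tr}(\bm{S}\bm{S}^{+})+0+0+F+\mathrm{tr}(\bm{S}^{+})\mathrm{tr}(\bm{S}^{+}\bm{X}\bm{X}^{\top}\bm{S})-F-\mathrm{tr}(\bm{S}^{+})\mathrm{tr}(\bm{S}^{+}\bm{X}\bm{X}^{\top}\bm{S})+(p-\mathrm{tr}(\bm{S}\bm{S}^{+}))F$, and the two $\mathrm{tr}(\bm{S}^{+})\mathrm{tr}(\cdot)$ terms cancel, leaving $F(p-2\mathrm{tr}(\bm{S}\bm{S}^{+})-1)$.

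For Part~$(iii)$, I would substitute Part~$(i)$ into $\sum_{\alpha,\beta,k}Y_{\alpha k}\,\partial G_{k\beta}/\partial Y_{\alpha\beta}$. The term carrying the matrix derivative is handled directly by Part~$(ii)$, contributing $(r^{2}(F)/F)(p-2\mathrm{tr}(\bm{S}\bm{S}^{+})-1)$. The remaining two terms both factor through the single quantity $\Lambda:=\sum_{\alpha,\beta,k}Y_{\alpha k}\bigl(\partial F/\partial Y_{\alpha\beta}\bigr)(\bm{S}^{+}\bm{X}\bm{X}^{\top}\bm{S}\bm{S}^{+})_{k\beta}$. Using Part~$(iii)$ of \Cref{Lemma1} to replace $\partial F/\partial Y_{\alpha\beta}$, the $(\bm{I}-\bm{S}\bm{S}^{+})$ piece vanishes because $(\bm{I}-\bm{S}\bm{S}^{+})\bm{S}\bm{S}^{+}=0$, and the remaining piece collapses via $\bm{Y}^{\top}\bm{Y}=\bm{S}$ and $\bm{S}^{+}\bm{S}\bm{S}^{+}=\bm{S}^{+}$ to $\Lambda=-2\mathrm{tr}\bigl((\bm{X}^{\top}\bm{S}^{+}\bm{X})^{2}\bigr)$. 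Inserting this value reproduces the stated identity after combining like powers of $F$.

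The main obstacle I expect is purely clerical: tracking the nine index-heavy summands in Part~$(ii)$ through the correct transposes and making the cancellations explicit, together with the index gymnastics in collapsing $\Lambda$. Once the two reductions $(\bm{I}-\bm{S}\bm{S}^{+})\bm{S}\bm{S}^{+}=0$ and $\bm{S}^{+}\bm{S}\bm{S}^{+}=\bm{S}^{+}$ are exploited and the two scalar trace terms in the nine-sum cancellation are identified, no genuinely new estimates or probabilistic arguments are needed.
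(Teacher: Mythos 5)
Your proposal is correct and follows essentially the same route as the paper: product/chain rule for Part~$(i)$, the transpose identity plus Part~$(iv)$ of \Cref{Lemma1} and the nine contractions of \Cref{prop:A1_t_A9} (with the same cancellations) for Part~$(ii)$, and substitution of Parts~$(i)$--$(ii)$ together with the evaluation $\Lambda=-2\mathrm{tr}\bigl((\bm{X}^{\top}\bm{S}^{+}\bm{X})^{2}\bigr)$ for Part~$(iii)$. The only cosmetic difference is that the paper isolates the computation of $\Lambda$ as a separate statement (\Cref{prop:additional}) rather than carrying it out inline as you propose.
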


\begin{proof}
$(i)$\quad{ }  We have $ \frac{\partial G_{k l}}{\partial Y_{\alpha \beta}}=\frac{\partial}{\partial Y_{\alpha \beta}}\bigg\{\frac{r^2(F)}{F^2}(\bm{S}^{+}\bm{X}\bm{X}^{\top}\bm{S}\bm{S}^{+})_{k l}\bigg\}$. Then,
        \begin{flalign*}
            & \frac{\partial G_{k l}}{\partial Y_{\alpha \beta}} 
          =\frac{\partial}{\partial Y_{\alpha \beta}}\left(\frac{r^2(F)}{F^2}\right)(\bm{S}^{+}\bm{X}\bm{X}^{\top}\bm{S}\bm{S}^{+})_{k l}
          +\frac{r^2(F)}{F^2}\frac{\partial}{\partial Y_{\alpha \beta}}(\bm{S}^{+}\bm{X}\bm{X}^{\top}\bm{S}\bm{S}^{+})_{k l}\\
          &=\frac{2r(F)r^{\prime}(F)(\frac{\partial F}{\partial y_{\alpha \beta}})F^2-2F(\frac{\partial F}{\partial y_{\alpha \beta}})r^2(F)}{F^4}(\bm{S}^{+}\bm{X}\bm{X}^{\top}\bm{S}\bm{S}^{+})_{k l} 
          +\frac{r^2(F)}{F^2}\frac{\partial}{\partial Y_{\alpha \beta}}(\bm{S}^{+}\bm{X}\bm{X}^{\top}\bm{S}\bm{S}^{+})_{k l}.
&&
        \end{flalign*}
Hence,
\begin{flalign*}
            & \frac{\partial G_{k l}}{\partial Y_{\alpha \beta}}
          =\frac{2r(F)r^{\prime}(F)}{F^2}\left(\frac{\partial F}{\partial y_{\alpha \beta}}\right)(\bm{S}^{+}\bm{X}\bm{X}^{\top}\bm{S}\bm{S}^{+})_{k l} 
          -\frac{2r^2(F)}{F^3}\left(\frac{\partial F}{\partial y_{\alpha \beta}}\right)(\bm{S}^{+}\bm{X}\bm{X}^{\top}\bm{S}\bm{S}^{+})_{k l} \\
          &
          +\frac{r^2(F)}{F^2}\frac{\partial}{\partial Y_{\alpha \beta}}(\bm{S}^{+}\bm{X}\bm{X}^{\top}\bm{S}\bm{S}^{+})_{k l},&&
        \end{flalign*}
        this proves Part~$(i)$.

        \begin{flalign*}
            (ii)\quad \mbox{We have}
            \sum_{\alpha,k,\beta}^{}{Y_{\alpha k}\frac{\partial }{\partial y_{\alpha \beta}}(\bm{S}\bm{S}^{+}\bm{X}\bm{X}^{\top}\bm{S}^{+})_{\beta k}}=\sum_{\alpha,k,\beta}^{}{Y_{\alpha k}\frac{\partial }{\partial y_{\alpha \beta}}(\bm{S}^{+}\bm{X}\bm{X}^{\top}\bm{S}\bm{S}^{+})_{k \beta}}.&&
        \end{flalign*}
        By Part $(iv)$ of \Cref{Lemma1},  for $A^{\alpha,k,\beta}_{1},A^{\alpha,k,\beta}_{2},\cdots,A^{\alpha,k,\beta}_{9}$ defined in \Cref{prop:A1_t_A9}, we get
        \begin{eqnarray}
            \nonumber \sum_{\alpha,k,\beta}^{}&{Y_{\alpha k}\frac{\partial }{\partial y_{\alpha \beta}}(\bm{S}^{+}\bm{X}\bm{X}^{\top}\bm{S}\bm{S}^{+})_{k \beta}} 
            =\ds{\sum_{\alpha,k,\beta}^{}}{Y_{\alpha k}(A^{\alpha,k,\beta}_{1}+A^{\alpha,k,\beta}_{2}+\cdots+A^{\alpha,k,\beta}_{9})}. 
            \label{a}
        \end{eqnarray}
        Therefore, from $\eqref{a1}-\eqref{a9}$, we get
        \begin{flalign}
            \sum_{\alpha,k,\beta}^{}{Y_{\alpha k}A^{\alpha,k,\beta}_{1}}+\sum_{\alpha,k,\beta}^{}{Y_{\alpha k}A^{\alpha,k,\beta}_{5}}=0, \quad{ } 
            \sum_{\alpha,k,\beta}^{}{Y_{\alpha k}A^{\alpha,k,\beta}_{6}}+\sum_{\alpha,k,\beta}^{}{Y_{\alpha k}A^{\alpha,k,\beta}_{8}}=0, \label{a11}\\
            \sum_{\alpha,k,\beta}^{}{Y_{\alpha k}A^{\alpha,k,\beta}_{3}}=\sum_{\alpha,k,\beta}^{}{Y_{\alpha k}A^{\alpha,k,\beta}_{4}}=0.\label{a12}
        \end{flalign}
        Then, by \eqref{a11} and \eqref{a12},  together with \eqref{a2}, \eqref{a7} and \eqref{a9} we get
        \begin{flalign*}
            \sum_{\alpha,k,\beta}^{}{Y_{\alpha k}\left(\sum_{j=1}^{9}A^{\alpha,k,\beta}_{j}\right)}=\sum_{\alpha,k,\beta}^{}{Y_{\alpha k}A^{\alpha,k,\beta}_{2}} 
            +\sum_{\alpha,k,\beta}^{}{Y_{\alpha k}A^{\alpha,k,\beta}_{7}}
            +\sum_{\alpha,k,\beta}^{}{Y_{\alpha k}A^{\alpha,k,\beta}_{9}} 
            =F\left(p-2\mathrm{tr}(\bm{S}\bm{S}^{+})-1\right),&&
        \end{flalign*}
this proves Part~(ii).\\
$(iii)$ To prove Part~$(iii)$, we first note that
        \begin{flalign*}
            &\sum_{\alpha,\beta,k}^{}{Y_{\alpha k}\frac{\partial G_{k \beta}}{\partial Y_{\alpha \beta}}}
           =\frac{2r(F)r^{\prime}(F)}{F^2}\sum_{\alpha,\beta,k}^{}{Y_{\alpha k}\left(\frac{\partial F}{\partial y_{\alpha \beta}}\right)(\bm{S}\bm{S}^{+}\bm{X}\bm{X}^{\top}\bm{S}^{+})_{\beta k}}\\
           &-\frac{2r^2(F)}{F^3}\sum_{\alpha,\beta,k}^{}{Y_{\alpha k}\left(\frac{\partial F}{\partial y_{\alpha \beta}}\right)(\bm{S}\bm{S}^{+}\bm{X}\bm{X}^{\top}\bm{S}^{+})_{\beta k}} 
           +\frac{r^2(F)}{F^2}\sum_{\alpha,k,\beta}^{}{Y_{\alpha k}\frac{\partial }{\partial y_{\alpha \beta}}(\bm{S}\bm{S}^{+}\bm{X}\bm{X}^{\top}\bm{S}^{+})_{\beta k}}\\
           &=\frac{2r(F)r^{\prime}(F)}{F^2}\left(-2\mathrm{tr}\left((\bm{X}^{\top}\bm{S}^{+}\bm{X})^2\right)\right)
           -\frac{2r^2(F)}{F^3}\left(-2\mathrm{tr}\left((\bm{X}^{\top}\bm{S}^{+}\bm{X})^2\right)\right)
           +\frac{r^2(F)}{F^2}F(p-2\mathrm{tr}(\bm{S}\bm{S}^{+})-1)\\
           &=-\frac{4r(F)r^{\prime}(F)}{F^2}\mathrm{tr}\left((\bm{X}^{\top}\bm{S}^{+}\bm{X})^2\right) 
           +\frac{4r^2(F)\mathrm{tr}\left((\bm{X}^{\top}\bm{S}^{+}\bm{X})^2\right)}{F^3}
           +\frac{r^2(F)}{F}(p-2\mathrm{tr}(\bm{S}\bm{S}^{+})-1),
           &&
        \end{flalign*}
        and this gives
        \begin{flalign*}
            \sum_{\alpha,\beta,k}^{}{Y_{\alpha k}\frac{\partial G_{k \beta}}{\partial Y_{\alpha \beta}}}
           =-\frac{4r(F)r^{\prime}(F)}{F^2}\mathrm{tr}\left((\bm{X}^{\top}\bm{S}^{+}\bm{X})^2\right)
           +\frac{r^2(F)}{F}\left(\frac{4\mathrm{tr}\left((\bm{X}^{\top}\bm{S}^{+}\bm{X})^2\right)}{F^2}+p-2\mathrm{tr}(\bm{S}\bm{S}^{+})-1\right),
           &&
        \end{flalign*}
        which completes the proof.
    \end{proof}

\begin{proposition}\label{Proposition5}
 Suppose that the conditions of \Cref{Theorem2}. Then, 
     \begin{flalign*}
         (i) \quad \mathrm{tr}(\bm{G})=r^2(F)/F;&&
     \end{flalign*}
      \begin{flalign*}
        (ii) \quad &\mathrm{tr}\left(\bm{Y}^{\top}\nabla_{\bm{Y}}\bm{G}^{\top}\right)
        =-4\frac{r(F)r^{\prime}(F)}{F^{2}}\mathrm{tr}\left((\bm{X}^{\top}\bm{S}^{+}\bm{X})^2\right)
        +\left(\frac{4\mathrm{tr}\left((\bm{X}^{\top}\bm{S}^{+}\bm{X})^2\right)}{F^{2}}+p-2\mathrm{tr}(\bm{S}\bm{S}^{+})-1\right)\frac{r^2(F)}{F};&&
    \end{flalign*}

     \begin{flalign*}
         (iii)\quad \mathrm{div}_{\mathrm{vec}(\Tilde{\bm{Y}})}\mathrm{vec}(\Tilde{\bm{Y}}\bm{H})=\left( nq+p-2\mathrm{tr}(\bm{S}\bm{S}^{+})-1+4\mathrm{tr}\left((\bm{X}^{\top}\bm{S}^{+}\bm{X})^2\right)/F^2\right)r^2(F)/F
         -4r(F)r^{\prime}(F)/F^2.&&
     \end{flalign*}
\end{proposition}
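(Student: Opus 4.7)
The plan is to treat the three parts in sequence, since Part~(ii) packages the content of \Cref{Lemma2}~(iii) into a single trace identity, and Part~(iii) is then the immediate combination of Parts~(i)--(ii) with \Cref{Proposition4}. No new technical calculations are required; all the genuinely heavy lifting has already been carried out in \Cref{Lemma1}, \Cref{prop:A1_t_A9}, and \Cref{Lemma2}.

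For Part~(i), I would start from $\bm{G}=r^{2}(F)\bm{S}^{+}\bm{X}\bm{X}^{\top}\bm{S}^{+}\bm{S}/F^{2}$, factor out the scalar, and apply the cyclic property of the trace to get $\mathrm{tr}(\bm{G})=\frac{r^{2}(F)}{F^{2}}\mathrm{tr}(\bm{X}^{\top}\bm{S}^{+}\bm{S}\bm{S}^{+}\bm{X})$. Since $\bm{S}$ is symmetric, one has $\bm{S}\bm{S}^{+}=\bm{S}^{+}\bm{S}$, and the Moore--Penrose identity $\bm{S}^{+}\bm{S}\bm{S}^{+}=\bm{S}^{+}$ collapses the expression to $\frac{r^{2}(F)}{F^{2}}\mathrm{tr}(\bm{X}^{\top}\bm{S}^{+}\bm{X})=r^{2}(F)/F$, using the definition $F=\mathrm{tr}(\bm{X}^{\top}\bm{S}^{+}\bm{X})$ from \eqref{FRdelta0}.

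For Part~(ii), I would expand $\mathrm{tr}(\bm{Y}^{\top}\nabla_{\bm{Y}}\bm{G}^{\top})$ in indices exactly as done in the proof of \Cref{Proposition4} (see the derivation leading to (p4iii)); this rewrites the trace as $\sum_{\alpha,\beta,k}Y_{\alpha k}\,\partial G_{k\beta}/\partial Y_{\alpha\beta}$. Part~(iii) of \Cref{Lemma2} evaluates exactly this sum, producing the claimed right-hand side. For Part~(iii), I would simply invoke \Cref{Proposition4}, namely $\mathrm{div}_{\mathrm{vec}(\Tilde{\bm{Y}})}\mathrm{vec}(\Tilde{\bm{Y}}\bm{H})=nq\,\mathrm{tr}(\bm{G})+\mathrm{tr}(\bm{Y}^{\top}\nabla_{\bm{Y}}\bm{G}^{\top})$, substitute Parts~(i) and (ii), and combine the three $r^{2}(F)/F$ contributions (the $nq$ term from $nq\,\mathrm{tr}(\bm{G})$, the $p-2\mathrm{tr}(\bm{S}\bm{S}^{+})-1$ term, and the $4\mathrm{tr}((\bm{X}^{\top}\bm{S}^{+}\bm{X})^{2})/F^{2}$ term) into the single bracketed factor of $r^{2}(F)/F$, leaving $-4r(F)r^{\prime}(F)/F^{2}$ untouched.

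The main obstacle here is purely notational: one must verify that the indexing convention for $\nabla_{\bm{Y}}\bm{G}^{\top}$ matches the expression $\sum_{\alpha,\beta,k}Y_{\alpha k}\,\partial G_{k\beta}/\partial Y_{\alpha\beta}$ evaluated in \Cref{Lemma2}~(iii), and one must use the symmetry identity $\bm{S}\bm{S}^{+}=\bm{S}^{+}\bm{S}$ to reconcile the two equivalent writings of $\bm{G}$ that appear in \Cref{Theorem2} and in \Cref{Lemma2}. Once these two conventions are aligned, the proposition reduces to a short algebraic assembly of previously established identities.
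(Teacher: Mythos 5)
Your proposal is correct and follows essentially the same route as the paper: Part~(i) by direct trace algebra (which the paper leaves as "algebraic computations"), Part~(ii) by the index expansion from \Cref{Proposition4} combined with \Cref{Lemma2}~(iii) (the paper's citation of "Part~(iv)" there is a typo for Part~(iii), which you correctly identify), and Part~(iii) by substituting Parts~(i)--(ii) into the identity of \Cref{Proposition4}.
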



    \begin{proof}
    $(i)$\quad{ }  Part~$(i)$ follows from algebraic computations. \\
         $(ii)$ \quad From \eqref{p4iii}, 
            $\mathrm{tr}\left(\bm{Y}^{\top}(\nabla_{Y}\bm{G}^{\top})\right)
           =\ds{\sum_{\alpha,\beta,k}^{}}{Y_{\alpha k}\frac{\partial G_{k \beta}}{\partial Y_{\alpha \beta}}}$. 
         Hence, by Part $(iv)$ of 
         \Cref{Lemma2},  we get
         \begin{flalign*}
             \mathrm{tr}&\left(\bm{Y}^{\top}(\nabla_{Y}\bm{G}^{\top})\right) 
             =-\frac{4r(F)r^{\prime}(F)}{F^2}\mathrm{tr}\left((\bm{X}^{\top}\bm{S}^{+}\bm{X})^2\right)
           +\frac{r^2(F)}{F}\left(\frac{4\mathrm{tr}\left((\bm{X}^{\top}\bm{S}^{+}\bm{X})^2\right)}{F^2}+p-2\mathrm{tr}(\bm{S}\bm{S}^{+})-1\right),&&
         \end{flalign*}
         this proves Part~$(ii)$.\\
        $(iii)$ \quad By Parts $(i)$-$(ii)$ along with \Cref{Proposition4}, $\mathrm{div}_{\mathrm{vec}(\Tilde{\bm{Y}})}\mathrm{vec}(\Tilde{\bm{Y}}\bm{H})= nq\mathrm{tr}(\bm{G})+\mathrm{tr}(\bm{Y}^{\top}\nabla_YG^{\top})$. Then, 
         \begin{eqnarray*}
            \mathrm{div}_{\mathrm{vec}(\Tilde{\bm{Y}})}\mathrm{vec}(\Tilde{\bm{Y}}\bm{H})
             =nqr^2(F)/F-4r(F)r^{\prime}(F)\mathrm{tr}\left((\bm{X}^{\top}\bm{S}^{+}\bm{X})^2\right)/F^2\\
             +\left(4\mathrm{tr}\left((\bm{X}^{\top}\bm{S}^{+}\bm{X})^2\right)/F^2+p-2\mathrm{tr}(\bm{S}\bm{S}^{+})-1\right)r^2(F)/F,
         \end{eqnarray*}
         and then,
                 \begin{flalign*}
            \mathrm{div}_{\mathrm{vec}(\Tilde{\bm{Y}})}\mathrm{vec}(\Tilde{\bm{Y}}\bm{H})
             =\left[ nq+p-2\mathrm{tr}(\bm{S}\bm{S}^{+})-1+4\mathrm{tr}\left((\bm{X}^{\top}\bm{S}^{+}\bm{X})^2\right)/F^2\right]r^2(F)/F
             -4r(F)r^{\prime}(F)/F^2,&&
         \end{flalign*}

         which completes the proof.
     \end{proof}
It should be noted that Part~$(ii)$ of \Cref{Proposition5} generalizes Lemma~1 in \cite{ChetelatWells}. Indeed, in the special case where $q=1$, we  $F=\mathrm{tr}(\bm{X}^{\top}\bm{S}^{+}\bm{X})=\bm{X}^{\top}\bm{S}^{+}\bm{X}$ and then, we get the result established in Lemma~1 of \cite{ChetelatWells}.

\section{On the derivation of the main results}\label{sec:appendMainres}
   \begin{proof}[Proof of \Cref{Lemma3}]
   (i) The proof of Part~(i) follows from classical differential calculus along with some algebraic computations.\\
    (ii) \quad{ } We have $
    \left(\frac{\partial \bm{S}\bm{S}^{+}\bm{X}}{\partial X_{ij}}\right)_{kl} = \ds{\frac{\partial}{\partial X_{ij}}}\ds{\sum_{\alpha}} (\bm{S}\bm{S}^{+})_{k\alpha}\bm{X}_{\alpha l}$. Then,
\begin{eqnarray*}
    \left(\frac{\partial \bm{S}\bm{S}^{+}\bm{X}}{\partial X_{ij}}\right)_{kl} 
    = \sum_{\alpha} (\bm{S}\bm{S}^{+})_{k\alpha}\frac{\partial \bm{X}_{\alpha l}}{\partial X_{ij}} 
    = \sum_{\alpha} (\bm{S}\bm{S}^{+})_{k\alpha}\delta_{\alpha i}\delta_{lj} 
    = (\bm{S}\bm{S}^{+})_{ki}\delta_{lj},
\end{eqnarray*}
this proves Part~$(ii)$.\\
(iii) \quad \text{ By Parts $(i)$ and $(ii)$, we get} \\
\begin{eqnarray*}
    & \frac{\partial g_{kl}}{\partial \bm{X}_{i,j}} = \left(\frac{\partial}{\partial X_{ij}}\frac{r(F)}{F}\right)(\bm{S}\bm{S}^{+}\bm{X})_{kl} + \frac{r(F)}{F}\left(\frac{\partial}{\partial X_{ij}}(\bm{S}\bm{S}^{+}\bm{X})_{kl}\right) \\
    & = \frac{r^{\prime}(F)F-r(F)}{F^2}\left(\frac{\partial F}{\partial X_{ij}}\right)(\bm{S}\bm{S}^{+}\bm{X})_{kl} + \frac{r(F)}{F}\left(\frac{\partial}{\partial X_{ij}}(\bm{S}\bm{S}^{+}\bm{X})_{kl}\right) \\
    & = \frac{2(Fr^{\prime}(F)-r(F))}{F^2}(\bm{S}^{+}\bm{X})_{ij}(\bm{S}\bm{S}^{+}\bm{X})_{kl} + \frac{r(F)}{F}(\bm{S}\bm{S}^{+})_{ki}\delta_{lj} ,
\end{eqnarray*}
this proves the statement in~$(iii)$.\\
(iv) \quad \text{By Part $(iii)$} \text{, we have}
\begin{eqnarray*}
    & \ds{\sum_{i,j}^{}}{\frac{\partial g_{ij}}{\partial X_{ij}}} = \ds{\sum_{i,j}^{}}{\bigg\{\frac{2Fr^{\prime}(F)-r(F)}{F^2}(\bm{S}^{+}\bm{X})_{ij}(\bm{S}\bm{S}^{+}\bm{X})_{ij}+\frac{r(F)}{F}(\bm{S}\bm{S}^{+})_{ii}\bigg\}} \\
    & = 2\frac{Fr^{\prime}(F)-r(F)}{\mathrm{tr}^2(F)}\sum_{i,j}^{}{(\bm{S}^{+}\bm{X})_{ij}(\bm{X}^{\top}\bm{S}^{+})_{ji}}+\frac{r(F)}{F} \mathrm{tr}(\bm{S}\bm{S}^{+}) \\
    & = 2\frac{Fr^{\prime}(F)-r(F)}{F^2}\sum_{i}^{}{(\bm{S}\bm{S}^{+}\bm{X}\bm{X}^{\top}\bm{S}^{+})_{ii}}+q\frac{r(F)}{F}\mathrm{tr}(\bm{S}\bm{S}^{+}). 
\end{eqnarray*}
Then, $\ds{\sum_{i,j}^{}}{\frac{\partial g_{ij}}{\partial X_{ij}}}
    = 2\frac{Fr^{\prime}(F)-r(F)}{F^2}\mathrm{tr}(\bm{S}\bm{S}^{+}\bm{X}\bm{X}^{\top}\bm{S}^{+})+q\frac{r(F)}{F}\mathrm{tr}(\bm{S}\bm{S}^{+})$ and then,
\begin{eqnarray*}
    \sum_{i,j}^{}{\frac{\partial g_{ij}}{\partial X_{ij}}} 
    = 2\frac{Fr^{\prime}(F)-r(F)}{F^2}\mathrm{tr}(\bm{X}^{\top}\bm{S}^{+}\bm{S}\bm{S}^{+}\bm{X})+q\frac{r(F)}{F}\mathrm{tr}(\bm{S}\bm{S}^{+}) \\
    = 2\frac{Fr^{\prime}(F)-r(F)}{F^2}\mathrm{tr}(\bm{X}^{\top}\bm{S}^{+}\bm{X})+q\frac{r(F)}{F}\mathrm{tr}(\bm{S}\bm{S}^{+}), 
\end{eqnarray*}
this gives 
$\ds{\sum_{i,j}^{}}{\frac{\partial g_{ij}}{\partial X_{ij}}} =
    2
    \frac{Fr^{\prime}(F)-r(F)}{F^2}F+q\frac{r(F)}{F}\mathrm{tr}(\bm{S}\bm{S}^{+})$. Hence,
\begin{eqnarray*}
    \sum_{i,j}^{}{\frac{\partial g_{ij}}{\partial X_{ij}}} 
    = 2r^{\prime}(F)-2\frac{r(F)}{F}+q\frac{r(F)}{F}\mathrm{tr}(\bm{S}\bm{S}^{+}) 
    = 2r^{\prime}(F)+(q\mathrm{tr}(\bm{S}\bm{S}^{+})-2)r(F)/F, 
\end{eqnarray*}
this completes the proof.
    \end{proof}
\begin{proposition}\label{prop:additional}
     Let $\bm{Y}$, $\bm{X}$, $F$ and $\bm{G}$ be as defined in \Cref{Theorem2}. 
     Then
          \begin{eqnarray*}
        \sum_{\alpha,k,\beta}^{}{Y_{\alpha k}\left(\frac{\partial F}{\partial Y_{\alpha \beta}}\right)(\bm{S}\bm{S}^{+}\bm{X}\bm{X}^{\top}\bm{S}^{+})_{\beta k}}=-2\mathrm{tr}((\bm{X}^{\top}\bm{S}^{+}\bm{X})^2). 
    \end{eqnarray*}
\end{proposition}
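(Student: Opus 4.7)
The plan is to substitute the explicit formula for $\partial F/\partial Y_{\alpha\beta}$ supplied by Part~$(iii)$ of \Cref{Lemma1}, split the resulting triple sum into two pieces, and reduce each piece to a trace of a matrix product. The reduction relies on the elementary index identity
$$\sum_{\alpha,k,\beta} Y_{\alpha k}\, C_{\beta\alpha}\, B_{\beta k} \;=\; \mathrm{tr}(C^{\top} B\, \bm{Y}^{\top}),$$
together with the Moore--Penrose relations $\bm{S}^{+}\bm{S}\bm{S}^{+}=\bm{S}^{+}$, $\bm{S}\bm{S}^{+}\bm{S}=\bm{S}$, the identification $\bm{Y}^{\top}\bm{Y}=\bm{S}$, and the symmetry of both $\bm{S}^{+}$ and the orthogonal projection $\bm{S}\bm{S}^{+}$ (which will be used when transposing).

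Concretely, write the target expression as $T_{1}+T_{2}$, where $T_{1}$ carries the contribution of the $-2(\bm{S}^{+}\bm{X}\bm{X}^{\top}\bm{S}^{+}\bm{Y}^{\top})_{\beta\alpha}$ term and $T_{2}$ that of $2((\bm{I}-\bm{S}\bm{S}^{+})\bm{X}\bm{X}^{\top}\bm{S}^{+}\bm{S}^{+}\bm{Y}^{\top})_{\beta\alpha}$. For $T_{1}$, take $C=\bm{S}^{+}\bm{X}\bm{X}^{\top}\bm{S}^{+}\bm{Y}^{\top}$ and $B=\bm{S}\bm{S}^{+}\bm{X}\bm{X}^{\top}\bm{S}^{+}$, apply the identity above, then cyclically rewrite the resulting trace. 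Two successive applications of $\bm{S}^{+}\bm{S}\bm{S}^{+}=\bm{S}^{+}$ (one from the $\bm{S}^{+}\cdot\bm{S}\bm{S}^{+}$ appearing where $C^{\top}$ meets $B$, and one after using $\bm{Y}^{\top}\bm{Y}=\bm{S}$ under the trace) collapse the expression to $\mathrm{tr}((\bm{X}^{\top}\bm{S}^{+}\bm{X})^{2})$, yielding $T_{1}=-2\,\mathrm{tr}((\bm{X}^{\top}\bm{S}^{+}\bm{X})^{2})$.

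For $T_{2}$, the same manipulation brings the factor $(\bm{I}-\bm{S}\bm{S}^{+})\bm{S}\bm{S}^{+}$ into the interior of the trace; since $\bm{S}\bm{S}^{+}$ is a projection this product vanishes, so $T_{2}=0$. Combining the two pieces delivers the claim. The main obstacle is purely bookkeeping: one must arrange the transposes carefully so that, after the trace identity is applied, the Moore--Penrose simplifications (in particular the cancellation $(\bm{I}-\bm{S}\bm{S}^{+})\bm{S}\bm{S}^{+}=\bm{0}$ that kills $T_{2}$, and the repeated use of $\bm{S}^{+}\bm{S}\bm{S}^{+}=\bm{S}^{+}$ that shrinks $T_{1}$) appear in positions where they can be invoked. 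Once the trace has been written in its cyclic form, the remaining computation is routine algebra and requires no further ideas.
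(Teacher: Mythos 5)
Your proposal is correct and follows essentially the same route as the paper's proof: substitute the expression for $\partial F/\partial Y_{\alpha\beta}$ from Part~(iii) of \Cref{Lemma1}, split the sum into the two corresponding pieces, reduce each to a trace, and use $\bm{S}^{+}\bm{S}\bm{S}^{+}=\bm{S}^{+}$, $\bm{Y}^{\top}\bm{Y}=\bm{S}$ and $(\bm{I}-\bm{S}\bm{S}^{+})\bm{S}\bm{S}^{+}=\bm{0}$ to obtain $-2\,\mathrm{tr}((\bm{X}^{\top}\bm{S}^{+}\bm{X})^{2})$ and $0$ respectively. The only difference is presentational (you package the index bookkeeping into the identity $\sum_{\alpha,k,\beta}Y_{\alpha k}C_{\beta\alpha}B_{\beta k}=\mathrm{tr}(C^{\top}B\bm{Y}^{\top})$, whereas the paper manipulates the indices directly), and you correctly attribute the derivative formula to \Cref{Lemma1} where the paper's citation of \Cref{Theorem2} is a typo.
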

\begin{proof}
Let $\mathcal{J}_{0}=\ds{\sum_{\alpha,k,\beta}^{}}{Y_{\alpha k}\left(\frac{\partial F}{\partial y_{\alpha \beta}}\right)(\bm{S}\bm{S}^{+}\bm{X}\bm{X}^{\top}\bm{S}^{+})_{\beta k}}$.  \text{By Part $(iii)$ of \Cref{Theorem2},  we get }
        \begin{eqnarray*}
            \mathcal{J}_{0}
            =-2\sum_{\alpha,k,\beta}^{}{Y_{\alpha k}(\bm{S}^{+}\bm{X}\bm{X}^{\top}\bm{S}^{+}\bm{Y}^{\top})_{\beta \alpha}(\bm{S}\bm{S}^{+}\bm{X}\bm{X}^{\top}\bm{S}^{+})_{\beta k}}\\
            +2\sum_{\alpha,k,\beta}^{}{Y_{\alpha k}((\bm{I}-\bm{S}\bm{S}^{+})\bm{X}\bm{X}^{\top}\bm{S}^{+}\bm{S}^{+}\bm{Y}^{\top})_{\beta \alpha}(\bm{S}\bm{S}^{+}\bm{X}\bm{X}^{\top}\bm{S}^{+})_{\beta k}}.
        \end{eqnarray*}
        Then
        \begin{eqnarray*}
        \mathcal{J}_{0} 
            =-2\sum_{\alpha,k}^{}{Y_{\alpha k}\sum_{\beta}^{}{(\bm{Y}\bm{S}^{+}\bm{X}\bm{X}^{\top}\bm{S}^{+})_{\alpha \beta}(\bm{S}\bm{S}^{+}\bm{X}\bm{X}^{\top}\bm{S}^{+})_{\beta k}}} \\
            +2\sum_{\alpha,k}^{}{Y_{\alpha k}\sum_{\beta}^{}{(\bm{Y}\bm{S}^{+}\bm{S}^{+}\bm{X}\bm{X}^{\top}(\bm{I}-\bm{S}\bm{S}^{+}))_{\alpha \beta}(\bm{S}\bm{S}^{+}\bm{X}\bm{X}^{\top}\bm{S}^{+})_{\beta k}}},
\end{eqnarray*}
and then
\begin{eqnarray*}
\mathcal{J}_{0}
            &=&-2\sum_{\alpha,k}^{}{Y_{\alpha k}(\bm{Y}\bm{S}^{+}\bm{X}\bm{X}^{\top}\bm{S}^{+}\bm{S}\bm{S}^{+}\bm{X}\bm{X}^{\top}\bm{S}^{+})_{\alpha k}} \\
            & & \quad{ } +2\sum_{\alpha,k}^{}{Y_{\alpha k}(\bm{Y}\bm{S}^{+}\bm{S}^{+}\bm{X}\bm{X}^{\top}(\bm{I}-\bm{S}\bm{S}^{+})\bm{S}\bm{S}^{+}\bm{X}\bm{X}^{\top}\bm{S}^{+})_{\alpha k}}.
\end{eqnarray*}
        Further, since $(\bm{I}-\bm{S}\bm{S}^{+})\bm{S}\bm{S}^{+}=0$, we get $\mathcal{J}_{0}
            =-2\sum_{\alpha,k}^{}{\bm{Y}^{\top}_{k \alpha}(\bm{Y}\bm{S}^{+}\bm{X}\bm{X}^{\top}\bm{S}^{+}\bm{X}\bm{X}^{\top}\bm{S}^{+})_{\alpha k}}$. Then, 
            $\mathcal{J}_{0}
            =-2\sum_{k}^{}{(\bm{Y}^{\top}\bm{Y}\bm{S}^{+}\bm{X}\bm{X}^{\top}\bm{S}^{+}\bm{X}\bm{X}^{\top}\bm{S}^{+})_{kk}} 
            =-2\sum_{k}^{}{(\bm{S}\bm{S}^{+}\bm{X}\bm{X}^{\top}\bm{S}^{+}\bm{X}\bm{X}^{\top}\bm{S}^{+})_{kk}}$ and then
        \begin{eqnarray*}
            \mathcal{J}_{0}
            =-2\mathrm{tr}(\bm{S}\bm{S}^{+}\bm{X}\bm{X}^{\top}\bm{S}^{+}\bm{X}\bm{X}^{\top}\bm{S}^{+})
            =-2\mathrm{tr}(\bm{X}^{\top}\bm{S}^{+}\bm{X}\bm{X}^{\top}\bm{S}^{+}\bm{S}\bm{S}^{+}\bm{X}).
        \end{eqnarray*}
Hence,
\begin{eqnarray*}
            \mathcal{J}_{0}=\sum_{\alpha,k,\beta}^{}{Y_{\alpha k}\left(\frac{\partial F}{\partial y_{\alpha \beta}}\right)(\bm{S}\bm{S}^{+}\bm{X}\bm{X}^{\top}\bm{S}^{+})_{\beta k}}
            =-2\mathrm{tr}(\bm{X}^{\top}\bm{S}^{+}\bm{X}\bm{X}^{\top}\bm{S}^{+}\bm{X}) 
            =-2\mathrm{tr}((\bm{X}^{\top}\bm{S}^{+}\bm{X})^2),
        \end{eqnarray*}
        this completes the proof. 
\end{proof}

\bibliographystyle{abbrvnat}
\bibliography{AoS_matrix_case_Nov23_2023}
\end{document}